\title[Comparison principles for elliptic branches]{Comparison principles for viscosity solutions of elliptic branches of fully nonlinear equations independent of the gradient}
\author{Marco Cirant}
\address{Dipartimento di Scienze Matematiche Fisiche e Informatiche, Universit\`a di Parma, Parco Area delle Scienze 53/a, 43124--Parma, Italy}
\email{marcoalessandro.cirant@unipr.it (Marco Cirant)}\thanks{Cirant partially supported by the Fondazione CaRiPaRo Project ``Nonlinear Partial Differential Equations: Asymptotic Problems and Mean-Field Games'' and the Programme ``FIL-Quota Incentivante'' of University of Parma, co-sponsored by Fondazione Cariparma.}
\author{Kevin R.\ Payne}
\address{Dipartimento di Matematica ``F. Enriques''\\ Universit\`a di Milano\\ Via C. Saldini 50\\ 20133--Milano, Italy}
\email{kevin.payne@unimi.it (Kevin R. Payne)} \thanks{Payne partially supported by the Gruppo Nazionale per l'Analisi Matematica, la Probabilit\`a e le loro Applicazioni (GNAMPA) of the Istituto Nazionale di Alta Matematica (INdAM) and the projects: GNAMPA 2017 ``Viscosity solution methods for fully nonlinear degenerate elliptic equations'', GNAMPA 2018 ``Costanti critiche e problemi asintotici per equazioni completamente non lineari'' e GNAMPA 2019 ``Problemi differenziali per operatori fully nonlinear fortemente degeneri''.}
\date{\today} \linespread{1.2}
\keywords{}
\subjclass[2010]{}
\newcommand{\cS}{\mathcal{S}}
\newcommand{\Q}{\mathcal{Q}}
\newcommand{\R}{\mathbb{R}}
\newcommand{\N}{\mathbb{N}}
\newcommand{\USC}{\mathrm{USC}}
\newcommand{\LSC}{\mathrm{LSC}}
\newcommand{\SA}{\mathrm{SA}}
\newcommand{\TSH}{\Theta \mathrm{SH}}
\newcommand{\TSHD}{\widetilde{\Theta} \mathrm{SH}}
\newcommand{\QSHD}{\widetilde{\mathcal{Q}} \mathrm{SH}}
\newcommand{\veps}{\varepsilon}
\newcommand{\cJ}{\mathcal{J}}
\newcommand{\cI}{\mathcal{I}}
\newcommand{\cF}{\mathcal{F}}
\newcommand{\cP}{\mathcal{P}}
\newcommand{\cN}{\mathcal{N}}
\newcommand{\cQ}{\mathcal{Q}}
\newcommand{\wt}{\widetilde}
\newtheorem{thm}{\textbf{Theorem}}[section]
\newtheorem{lem}[thm]{\textbf{Lemma}}
\newtheorem{prop}[thm]{\textbf{Proposition}}
\theoremstyle{remark}
\newtheorem{rem}[thm]{\textbf{Remark}}
\theoremstyle{definition}
\newtheorem{defn}[thm]{\textbf{Definition}}
\newtheoremstyle{Claim}{}{}{\itshape}{}{\itshape\bfseries}{:}{ }{#1}
\theoremstyle{Claim}
\newtheorem{ack}{Acknowledgment}
\begin{document}

\maketitle

\makeatletter
\def\l@subsection{\@tocline{2}{0pt}{2.5pc}{5pc}{}}
\makeatother

\setcounter{tocdepth}{1}
\tableofcontents

\begin{abstract} The validity of the comparison principle in variable coefficient fully nonlinear gradient free potential theory is examined and then used to prove the comparison principle for fully nonlinear partial differential equations which determine a suitable potential theory. The approach combines the notions of {\em proper elliptic branches} inspired by Krylov \cite{Kv95} with the  {\em monotonicity-duality method} initiated by  Harvey and Lawson \cite{HL09}. In the variable coefficient nonlinear potential theory, a special role is played by the {\em Hausdorff continuity} of the {\em proper elliptic map} $\Theta$ which defines the potential theory. In the applications to nonlinear equations defined by an operator $F$, structural conditions on $F$ will be determined for which there is a {\em correspondence principle} between {\em $\Theta$-subharmonics/superharmonics} and {\em admissible viscosity sub and supersolutions} of the nonlinear equation and for which comparison for the equation follows from the associated compatible potential theory. General results and explicit models of interest in differential geometry will be examined. Examples of improvements with respect to existing results on comparison principles will be given.
\end{abstract}

\section{Introduction}\label{sec:intro}

The main result of this paper concerns the validity of the comparison principle 
\begin{equation}\label{ulev}
\mbox{$u \leq v$ on $\partial \Omega \ \ \Rightarrow \ \ u \leq v$ in $\Omega$}
\end{equation}
when $u$ and $v$ are viscosity solutions of variable coefficient second order gradient free differential inclusions 
\begin{equation}\label{diffinc}
\mbox {$(u(x), D^2u(x)) \in \Theta(x)$ \quad and  \quad $ (v(x), D^2v(x)) \notin [\Theta(x)]^{\circ}$ \ \ for each $x \in \Omega$,} 
\end{equation}
where $\Omega \subset \R^N$ is a bounded domain and  $\Theta: \Omega \multimap \R \times \cS(N)$ is a suitable set valued map. We will show that the comparison principle holds if $\Theta$ is a {\em proper elliptic map} which is {\em Hausdorff continuous} (see Theorem \ref{thm:cpi} below). Proper ellipticity of the map means that $\Theta$ takes values in the {\em proper elliptic subsets} of $\R \times \cS(N)$. As formalized in Definition \ref{proper_ell_map}, this means that each 
$\Theta(x)$ is a non empty, closed, and proper subset of $\R \times \cS(N)$ which is {\em $\cQ$-monotone}; that is, 
\begin{equation}\label{Qmono}
\Theta(x) + \cQ \subset \Theta(x) \ \ \text{where} \ \cQ := \{(s,A) \in \R \times \cS(N): \ s \leq 0, A \geq 0 \} := \cN \times \cP,
\end{equation}
where $\cS(N)$ denotes the space of symmetric $N \times N$ matrices with its natural partial ordering. Hausdorff continuity just means that $\Theta$ is continuous as a map from Euclidian space into the metric space of closed subsets of $\R \times \cS(N)$ equipped with the Hausdorff distance (see Definition \ref{defn:HC} and the remarks which follow). For proper elliptic maps, we will show in Proposition \ref{UCHD} that this (locally uniform) continuity is equivalent to the statement that: for each $\Omega' \subset\subset \Omega$ and $\eta > 0$, there exists $\delta = \delta(\eta, \Omega') > 0$ such that for each $x,y \in \Omega'$
\begin{equation}\label{HC:intro}
\mbox{$|x -y| < \delta \ \Rightarrow \ \Theta(x) + (- \eta, \eta I) \subset \Theta(y)$.}
\end{equation}

Our main result will then be used to establish the validity of the comparison principle for second order gradient-free fully nonlinear PDEs of the form
\begin{equation}\label{FNLNG}
F(x, u(x), D^2 u(x)) = 0, \ \ x \in \Omega,
\end{equation}
where $F$ is a continuous function of its arguments. The equations \eqref{FNLNG} we treat will be {\em proper elliptic} in a sense which is inspired by Krylov's general notion of ellipticity \cite{Kv95}. More precisely, one shifts attention from the equation \eqref{FNLNG} to
the differential inclusion
\begin{equation}\label{DIF}
     \mbox{$(u(x), D^2u(x)) \in \partial \Theta(x)$ for each $x \in \Omega$,}
\end{equation}
where one requires that
 \begin{equation}\label{branch}
    \mbox{$ \partial \Theta(x) \subset \Gamma(x) := \{ (r,A) \in \R \times \cS(N): \ F(x,r,A) = 0 \}$ for each $x \in \Omega$.}
\end{equation}
If $\Theta$ is a {\em proper elliptic map}, then the inclusion \eqref{branch} is called a {\em proper elliptic branch} of the equation \eqref{FNLNG}. We will give sufficient conditions on the operator $F$ which ensure the existence of a continuous proper elliptic map $\Theta$ such that the {\em branch condition} \eqref{branch} holds and for which viscosity solutions $u,v$ of the differential inclusion \eqref{diffinc} correspond to {\em admissible viscosity subsolutions, supersolutions} of the differential equation. In this way, the comparison principle for the equation \eqref{FNLNG} follows from the validity of the comparison principle for \eqref{diffinc}. 

In order to carry out this program, we will treat operators $F$ which are {\em proper elliptic} in the sense that
\begin{equation}\label{PEP1}
F(x,r,A) \leq F(x,r + s, A + P) \ \ \text{for each} \ (r,A) \in \Phi(x), \ (s,P) \in \cQ = \cN \times \cP;
\end{equation}
where either 
	\begin{equation}\label{CPhi_Intro}
\mbox{$\Phi$ is a Hausdorff continuous proper elliptic map on $\Omega$ \ \ (constrained case)}
\end{equation}
or
\begin{equation}\label{UCPhi_Intro}
\mbox{$\Phi(x) = \cJ = \R \times \cS(N)$ for each $x \in \Omega$ \ \ (unconstrained case).}
\end{equation}
The notion of admissibility mentioned above uses $\Phi$ as an additional constraint on the test functions (or test jets) used in the viscosity formulation of subsolutions and supersolutions. In the unconstrained case \eqref{UCPhi_Intro}, this constraint is silent and one recovers the classical viscosity formulations. On the other hand, the constrained case \eqref{CPhi_Intro} arises in situations where $F(x, \cdot, \cdot)$ is suitably monotone only on proper subsets $\Phi(x) \subsetneq \R \times \cS(N)$, and the admissibility constraint is essential. Treating such situations in a general and coherent way is a main motivation of the present work.

\medskip

Before giving additional details of the possible applications to PDEs, we discuss the comparison principle \eqref{ulev} in the framework of nonlinear potential theory: given a {\em $\Theta$-subharmonic function} $u$ and a {\em $\Theta$-superharmonic function} $v$; that is,  upper semicontinuous and lower semicontinuous functions $u$ and $v$ on $\overline \Omega$ satisfying \eqref{diffinc} in a viscosity sense (see Definition \ref{defn:TSH}), we look for monotonicity and regularity properties on $\Theta$ that guarantee the validity of \eqref{ulev}. This program was initiated systematically in the groundbreaking work of Harvey and Lawson \cite{HL09} for differential inclusions
$$
\mbox {$D^2u(x) \in \Theta$ \quad and  \quad $ D^2v(x) \notin \Theta^{\circ}$ \ \ for each $x \in \Omega$;} 
$$
that is, in the context of a pure second order constant coefficient potential theory in which the elliptic map appearing in the inclusion does not depend on the $x$ variable. Such a potential theory might come from a purely second order operator of interest. In this situation, which corresponds to $\Theta(x) = \R \times \Theta$ on $\Omega$ in \eqref{diffinc}, it has been proven that the mere monotonicity assumption $\Theta + P \subset \Theta$ for all $P \ge 0$ is sufficient for the comparison principle to hold. The introduction of a genuine $x$-dependence and further constraints on $(u(x), Du(x))$ poses then the natural question of what are minimal conditions on the map $\Theta$ that guarantee the validity of the comparison principle. An important step in this program has been settled in a subsequent work \cite{HL11} for general differential inclusions involving the full 2-jet of $u$ on Riemannian manifolds; the approach in \cite{HL11} is based on the reduction of the $x$-dependent case to a constant map (called constant coefficient subequation) of the form $(u(x), Du(x), D^2 u(x)) \in \mathcal F \subset \R \times \R^N \times \cS(N)$, for which a general theory is developed. We mention that in the case of constant maps $\mathcal F$ on Euclidean space, further generalizations in the direction of minimal monotonicity assumptions on $\mathcal F$ are a work in progress \cite{CHLP19}. While the general approach of \cite{HL11} covers a wide variety of situations, it requires implicit assumptions on the $x$-dependence in the problem. In partial contrast, \cite{CP17} has been devoted to the search for more explicit conditions in the special case of $(u, Du)$-independent inclusions $D^2 u(x) \in \Psi(x) \subset \cS(N)$, with a particular focus placed on the minimal assumptions on the set-valued map $\Psi$ needed for the comparison principle. A main aim of the present work is to generalize results of \cite{CP17} to $Du$-independent inclusions of the form \eqref{diffinc}, which also allow for constraints on $u(x)$, and again with the purpose of identifying {\it monotonicity} and {\em regularity} properties of $\Theta$ as a set-valued map that lead to comparison principles. We finally mention that a recent work \cite{HL18} addresses similar (and additional) issues for maps of the form $\mathcal F(x) = \{(u, p, A) : F(u, p, A) \ge f(x)\}$, namely for maps $\mathcal F$ that are given by superlevel sets of a proper elliptic operator. For gradient-free operators, results obtained in \cite{HL18} fit into our theory.

We now explain how we aim to prove the comparison principle \eqref{ulev} for a variable coefficient gradient-free potential theory determined by $\Theta$. We follow the approach initiated in \cite{HL09} as continued in \cite{CP17}. There are three main ingredients in this approach: {\em monotonicity, duality} and {\em continuity}. The natural monotonicity in this context is to require that $\Theta(x)$ in $\cQ$-monotone in the sense \eqref{Qmono}. For the PDE applications, when $\Theta$ is suitably associated to a differential operator $F= F(x,r,A)$, this $\cQ$-monotonicity reflects the typical monotonicity properties of properness (decreasing in $r$) and degenerate ellipticity (increasing in $A$) for $F$. The natural notion of duality involves the {\em Dirichet dual} $\wt{\Theta}$ of $\Theta$, defined pointwise by 
\begin{equation}\label{DD}
\widetilde \Theta(x) := - \left[\Theta(x)^{\circ}\right]^c, \ \ x \in \Omega
\end{equation}
and introduced by Harvey and Lawson in \cite{HL09}. The dual map $\wt{\Theta}$ is proper elliptic if and only if $\Theta$ is. Moreover, $v$ is $\Theta$-superharmonic if and only if $\tilde{u}=-v$ is $\wt{\Theta}$-subharmonic. The first step in the monotonicity-duality approach for comparison \eqref{ulev} is to prove the relevant {\it subharmonic addition theorem}, which in this setting means (see Theorem \ref{thm:AAT}): given a $\Theta$-subharmonic function $u$ and a $\Theta$-superharmonic function $v$
\begin{equation}\label{AAT:intro}
\mbox{$u$ is $\Theta$-subharmonic, \ $\tilde u = -v$ is $\widetilde \Theta$-subharmonic \ $\Rightarrow$ \ $u + \tilde u$ is $\widetilde \cQ$-subharmonic}.
\end{equation}
It is worth noting that $\wt{\cQ}$ is a constant map, even when $\Theta$ and $\wt{\Theta}$ are not. The subharmonic addition theorem reduces the comparison principle \eqref{ulev} to the validity of the {\em zero maximum principle} (see Theorem \ref{thm:ZMP}): for every $w$ which is $\wt{\cQ}$-subharmonic on $\Omega$
\begin{equation}\label{ZMP_intro}
w \leq 0 \ \ \text{on} \ \partial \Omega \ \ \Rightarrow \ \ w \leq 0 \ \ \text{in} \ \Omega.
\end{equation}
Our proof of \eqref{ZMP_intro} exploits the following characterization of $\widetilde \cQ$-subharmonics as those functions $w$ whose positive part satisfies a comparison principle with respect to affine functions $a$: for all open subsets $X$ of $\Omega$,
\begin{equation}\label{SAP_intro}
\mbox{$w^+ \leq a$ on $\partial X \ \ \Rightarrow \ \ w^+ \leq a$ on $X$.}
\end{equation}

The proof of the fundamental subharmonic addition theorem \eqref{AAT:intro} relies on a reduction to semi-convex functions. To perform this reduction, based on sup-convolution approximations, one needs some control on how the proper elliptic sets $\Theta(x)$ behave as $x$ varies in $\Omega$. In particular, one needs to control the {\it distance} between $\Theta(x)$ and $\Theta(y)$ as subsets of $\R \times \cS(N)$. We will prove that a sufficient regularity condition is to requires the {\it Hausdorff continuity} of $\Theta$ taking values in the closed subsets of $\R \times \cS(N)$, thus generalizing the analogous condition in \cite{CP17} for maps with values in the closed subsets of $\cS(N)$. Once the reduction to semi-convex functions is available, the subharmonic addition theorem is obtained by means of Dirichlet duality and a Jensen-type lemma on the passage of almost everywhere to everywhere information (see Lemma \ref{lem:Slod}). 
Our main comparison result, generalizing the one in \cite{CP17} for set-valued maps in $\cS(N)$, is the following result (see Theorem \ref{thm:CP} for the proof).

\begin{thm}[Comparison principle: potential theoretic version]\label{thm:cpi} Let $\Theta$ be a Hausdorff continuous proper elliptic map on $\Omega$. Then the comparison principle holds; that is,
if $u \in \USC(\overline{\Omega})$ and $v \in \LSC(\overline{\Omega})$ are $\Theta$-subharmonic and $\Theta$-superharmonic respectively in $\Omega$ (in the sense of Definition \ref{defn:TSH}), then
$$
\mbox{$u \leq v$ on $\partial \Omega \ \ \Rightarrow \ \ u \leq v$ in $\Omega$.}
$$
\end{thm}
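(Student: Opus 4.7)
The plan is to deduce the comparison principle from two tools that have already been announced in the introduction: the subharmonic addition theorem \eqref{AAT:intro} and the zero maximum principle \eqref{ZMP_intro}. Set $w := u - v$. Since $v$ is $\Theta$-superharmonic, Dirichlet duality gives that $\tilde u := -v$ is $\widetilde \Theta$-subharmonic, so $w = u + \tilde u$. By the subharmonic addition theorem, the sum of a $\Theta$-subharmonic and a $\widetilde \Theta$-subharmonic function is $\widetilde \cQ$-subharmonic, hence $w \in \mathrm{USC}(\overline \Omega)$ is $\widetilde \cQ$-subharmonic on $\Omega$. The boundary hypothesis $u \leq v$ on $\partial \Omega$ reads $w \leq 0$ on $\partial \Omega$, and the zero maximum principle for $\widetilde \cQ$-subharmonic functions then yields $w \leq 0$ in $\Omega$, i.e.\ $u \leq v$ in $\Omega$.

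The real content therefore lies in the two ingredients being invoked. The zero maximum principle is the easier one: it rests on the characterization \eqref{SAP_intro} of $\widetilde \cQ$-subharmonics as functions whose positive part satisfies a sub-affine comparison, from which $w^+ \leq 0$ on $\partial \Omega$ propagates inward by a standard translation/barrier argument with affine functions. The subharmonic addition theorem is the genuine obstacle, and is where the Hausdorff continuity of $\Theta$ is crucial: one first replaces $u$ and $\tilde u$ by their sup-convolutions $u^\varepsilon$ and $\tilde u^\varepsilon$, which are semi-convex and almost everywhere twice differentiable. The twin difficulty is to show that (i) $u^\varepsilon$ is $\Theta^\varepsilon$-subharmonic and $\tilde u^\varepsilon$ is $\widetilde \Theta^\varepsilon$-subharmonic on a slightly smaller domain, where $\Theta^\varepsilon(x)$ is a suitable $\eta(\varepsilon)$-enlargement of $\Theta(x)$, and (ii) the error introduced by the $x$-variation of $\Theta$ on scales of order $\sqrt \varepsilon$ can be absorbed. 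The Hausdorff continuity in the form \eqref{HC:intro} provides exactly this: for $x$ and $y$ close, $\Theta(x) + (-\eta, \eta I) \subset \Theta(y)$, so the displacement of the contact point under sup-convolution costs a controlled enlargement of the set $\Theta(x)$ and dually a controlled enlargement of $\widetilde \Theta(x)$. The Jensen-type lemma \ref{lem:Slod} then upgrades the almost everywhere pointwise information $(u^\varepsilon(x), D^2 u^\varepsilon(x)) \in \Theta^\varepsilon(x)$ and similarly for $\tilde u^\varepsilon$, combined with the set-theoretic identity $\Theta^\varepsilon(x) + \widetilde \Theta^\varepsilon(x) \subset \widetilde \cQ$ (the defining duality relation \eqref{DD}), to conclude that $u^\varepsilon + \tilde u^\varepsilon$ is $\widetilde \cQ$-subharmonic in the viscosity sense. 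Passing to the limit $\varepsilon \to 0^+$ along the usual decreasing envelope then yields the addition theorem for $u + \tilde u$.

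Once both ingredients are available, the three-line argument in the first paragraph closes the proof, and the whole difficulty of the comparison principle has been pushed onto the regularity assumption on $\Theta$. I expect no new technical hurdle in the final assembly beyond checking that the domain restrictions introduced by the sup-convolution step (both $u$ and $\tilde u$ are only defined up to a distance from $\partial \Omega$ that shrinks with $\varepsilon$) are harmless: the conclusion $u \leq v$ on $\partial \Omega$ extends to $u \leq v$ on a tubular neighborhood of $\partial \Omega$ by upper/lower semicontinuity of $u$ and $v$, after which the zero maximum principle can be applied on a slightly shrunken domain and the comparison recovered on all of $\Omega$ by letting that shrinking go to zero.
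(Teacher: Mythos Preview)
Your overall architecture is exactly the paper's: reduce comparison to the zero maximum principle for $\wt{\cQ}$-subharmonics via the subharmonic addition theorem, and prove the latter by sup-convolution, the Jensen/Slodkowski lemma, and a limit. Two points deserve correction or sharpening.

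\textbf{The boundary/domain-shrinking step is wrong as stated.} Your claim that ``$u \leq v$ on $\partial \Omega$ extends to $u \leq v$ on a tubular neighborhood of $\partial \Omega$ by upper/lower semicontinuity'' is false in general: semicontinuity gives no one-sided control of $u-v$ just inside the boundary before comparison is proved. The paper avoids this trap cleanly and you should too: the subharmonic addition theorem is applied on an \emph{arbitrary} $X \subset\subset \Omega$ (where $\Theta$ is uniformly continuous), yielding $u+\tilde u \in \QSHD(X)$; since $X$ is arbitrary, $u+\tilde u \in \QSHD(\Omega)$, and then the zero maximum principle is invoked directly on $\Omega$ with the original boundary data. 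No tubular neighborhood argument is needed or available.

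\textbf{Two technical omissions in the addition theorem.} First, sup-convolution requires $u,\tilde u$ to be bounded; the paper inserts a truncation step (max against a bounded $\Theta$-harmonic quadratic, then a decreasing limit in $\QSHD$) to reduce to that case, and you should note it. Second, your framing with enlarged maps $\Theta^\varepsilon$, $\wt{\Theta}^\varepsilon$ is a bit slippery: if both are genuine enlargements, the inclusion $\Theta^\varepsilon(x)+\wt{\Theta}^\varepsilon(x)\subset\wt{\cQ}$ need not survive (enlarging $\Theta$ shrinks its dual). The paper sidesteps this by adding a small quadratic $\eta(|x|^2-\omega)$ to the sup-convolutions so that the perturbed functions are honestly $\Theta$- and $\wt{\Theta}$-subharmonic on the shrunken domain, and then the unperturbed jet addition $\Theta(x)+\wt{\Theta}(x)\subset\wt{\cQ}$ applies directly.
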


\medskip

We now return to the discussion of some possible applications of our potential theoretic result to fully nonlinear PDE. There is an extensive literature for treating general fully nonlinear elliptic equations via viscosity methods, and several attempts to restate or relax standard structural conditions (such as those stated in \cite{CIL92}) have been proposed. For example, when the equation lacks of strict monotonicity in the $u$-variable, as in $u$-independent equations, one can rely on some strict monotonicity (in some direction) with respect to the Hessian variable, see e.g. \cite{BM06, BB01, KK07}. As noted above, we are particularly interested in the constrained case where $F(x, \cdot, \cdot)$ is proper elliptic only when restricted to some admissibility constraint set $\Phi(x) \subsetneq \R \times \cS(N)$. By exploiting Krylov's idea of shifting the focus to the level sets of the operator $F$, the potential theoretic approach of Harvey and Lawson furnishes an elegant and unified framework to treat viscosity solutions with admissibility constraints in many situations that would otherwise require ad-hoc adjustments for a given operator $F$ of interest.

In the constrained case, a first general application to PDEs based on the comparison principle for differential inclusions developed here, is a comparison principle for the equation \eqref{FNLNG} under the following assumptions on $F$: there exists a proper elliptic map $\Phi: \Omega \to \wp(\R \times \cS(N))$ such that for each $x \in \Omega$ one has
\begin{equation}\label{PEP}
F(x,r,A) \leq F(x,r + s, A + P) \ \ \text{for each} \ (r,A) \in \Phi(x), \ (s,P) \in \cQ = \cN \times \cP;
\end{equation}
\begin{equation}\label{NEC}
\mbox{there exists $(r,A) \in \Phi(x)$ such that $F(x,r,A) = 0$;} 
\end{equation}
\begin{equation}\label{BC}
\mbox{ $\partial \Phi(x) \subset  \{ (r,A) \in \R \times \cS(N):  F(x,r,A) \leq 0 \}$ for each $x \in \Omega$;} 
\end{equation}
and for each $\Omega' \subset \subset \Omega$ and each $\eta > 0$ there exists $\delta = \delta(\eta, \Omega')$ such that 
\begin{equation}\label{RC}
\mbox{ $F(y, r - \eta, A + \eta I) \geq F(x, r, A) \quad \forall (r, A) \in \Phi(x), \forall x,y \in \Omega' \ {\rm with \ } |x - y| < \delta$. }
\end{equation}
The condition \eqref{PEP} states that $F(x, \cdot, \cdot)$ is proper elliptic if restricted to $\Phi(x)$. The conditions \eqref{NEC} and \eqref{BC} guarantee that the constraint $\Phi(x)$ is compatible with the zero locus of the operator $F$, so that the map defined by 
\begin{equation}\label{def_Theta}
\Theta(x) := \{ (r,A) \in \Phi(x):  \ F(x,r,A) \geq 0 \}
\end{equation}
is proper elliptic and defines a proper elliptic branch of \eqref{FNLNG} (see Theorem \ref{pick_branch}). The condition \eqref{RC} is a sufficient condition for $\Theta$ to be a Hausdorff continuous proper elliptic map, so that the comparison principle of Theorem \ref{thm:cpi} for $\Theta$-sub/superharmonic functions can be applied (see Theorem \ref{UCbranch}). One then obtains the comparison principle for {\it admissible} viscosity solutions to the PDE (see Definition \ref{Vs_def}), provided that a mild non-degeneracy assumption on $F$  holds (see formula \eqref{NDC} in the {\em correspondence principle} of Theorem \ref{thm:SHCVS}). The resulting comparison principle is stated in the main Theorem \ref{thm:CPpde}, which also covers the unconstrained case where $\Phi(x) = \R \times \cS(N)$ for all $x \in \Omega$.  

\begin{rem}\label{rem:improvements} Our main structural condition \eqref{RC} reflects a precise geometrical property of the associated map $\Theta$, which, in is some cases, is weaker than the general classical conditions in \cite{CIL92} (as will be noted for the equation \eqref{GMAE} below). Such improvements using our method were also seen for the reduced class of equations $F(x, D^2u) = 0$ (see  Remark 5.1 of \cite{CP17}). The condition \eqref{RC} can be regarded as a joint strict monotonicity with respect to $(r, A)$ and regularity with respect to the $x$ variable. On the other hand, the structural condition \eqref{RC} is \underline{not} necessary for $\Theta$ to be continuous.  Moreover, in some cases it may be easier to check directly the continuity of $\Theta$ by using Remark \ref{rem:cont0}.  An important example where this occurs is given below in \eqref{SLPE}.
\end{rem}

We now discuss various illustrations of our approach, in both constrained and unconstrained cases, where we will apply our method to two interesting model equations and their generalizations. An interesting feature of the methods presented here is how both situations can be placed into the same general framework. 

In the constrained case, we will prove a comparison principle for equations arising in the study of {\em hyperbolic affine hyperspheres}. The relevant equation can be written in the form
\begin{equation}\label{cy_intro}
[-u(x)]^{N+2} \det D^2 u(x) = h(x), \qquad x \in \Omega, \\
\end{equation}
where $h \geq 0$ is the negative of the curvature (when constant). These equations are proper elliptic on $\Phi := \cQ$ and are particularly degenerate (a lack of strict monotonicity properties) for vanishing curvatures. The comparison principle for \eqref{cy_intro} is given in Theorem \ref{thm:HASE}. This result is then generalized in  Theorem \ref{thm:CCCP} to the following class of perturbed Monge-Amp{\`e}re equations
\begin{equation}\label{GMAE}
g(m(x) - u(x)) \det (D^2 u(x) + M(x)) = h(x),
\end{equation}
where $g,m,M$ are continuous functions, and $g(\cdot)$ is increasing and positive on some open interval $(r_0, \infty)$. For a perturbation matrix $M$ which is merely continuous, the equation \eqref{GMAE} does \underline{not}, in general, satisfy the standard structural condition (3.14) of Crandall-Ishii-Lions \cite{CIL92}. 

In the unconstrained case, we present a new comparison principle for the {\em special Lagrangian potential equation}
\begin{equation}\label{SLPE}
\sum_{i = 1}^N \arctan \big( \lambda_i (D^2 u(x)) \big) = h(x),
\end{equation}
where $\{\lambda_i (A) \}_{i = 1}^N$ are the eigenvalues of $A \in \cS(N)$ and the {\em phase} $h$ takes values in the interval $\cI := (-N \pi/2, N \pi/2)$. This equation for $h$ fixed is proper elliptic (but possibly highly degenerate) on all of $\Phi:= \R \times \cS(N)$. The equation \eqref{SLPE} with constant phases $h(x) \equiv \theta$ was introduced by Harvey-Lawson \cite{HL82} in the study of {\em calibrated geometries} and existence and uniqueness of viscosity solutions in this case is known from their work (see \cite{HL09} and \cite{HL19}). The inhomogeneous equation also has a natural geometric interpretation (see, for example the discussion in \cite{HL19}), but it is less well understood. A key feature in the theory is played by the {\em special phase values}
\begin{equation}\label{SPV}
 \theta_k:= (N - 2k)\pi/2 \ \ \text{for} \  k = 1, \ldots N - 1,
 \end{equation}
which determine the {\em phase intervals}
\begin{equation}\label{I_k}
	\cI_k:= \left( (N - 2k) \frac{\pi}{2}, (N - 2(k - 1)) \frac{\pi}{2} \right) \ \ \text{with} \ \ k = 1, \ldots N.
\end{equation}
In Theorem \ref{thm:CSLPE}, we show that the comparison principle holds if $h$ is continuous and takes values in any one of the phase intervals \eqref{I_k}; that is, if 
 \begin{equation}\label{phase_condition}
 h(\Omega) \subset \cI_k \ \ \text{for any fixed} \ k = 1, \ldots, N.
 \end{equation} 
Our proof involves a delicate argument to show that the natural proper elliptic map
 \begin{equation}\label{SLP_PEM}
 \Theta(x):= \left\{ (r,A) \in \R \times \cS(N): \  \sum_{i = 1}^N \arctan \big( \lambda_i (A) \big) - h(x) \geq 0 \right\}, \ \ x \in \Omega
 \end{equation}
is Hausdorff continuous if \eqref{phase_condition} holds and hence the comparison principle for \eqref{SLPE} follows from Theorem \ref{thm:cpi}.

It is important to note that the comparison principle (and much more) is known in the special case for $h$ taking values in the top phase interval $\cI_1 = ((N-2)\pi/2, N\pi/2)$ as shown in Dinew, Do and T\^{o} \cite{DDT19}. An alternate proof is given by Harvey and Lawson \cite{HL19} which makes use of the notion of {\em tameness} of the operator $G(A):= \sum_{i = 1}^N \arctan \big( \lambda_i (A) \big)$ on the subequation $\mathcal{G}(\theta) := \{A \in \cS(N): G(A) \geq \theta\}$).

 It is also important to note that while the operator
 \begin{equation}\label{SLPO}
 F(x,r,A):=  \sum_{i = 1}^N \arctan \big( \lambda_i (A) \big) - h(x)
 \end{equation}
is  proper elliptic on all of $\Phi= \R \times \cS(N)$, it \underline{fails} to satisfy our regularity condition \eqref{RC} if $h$ takes on any of the special values $\theta_k$ in \eqref{SPV}. Indeed, we will show that the proper elliptic map $\Theta$ in \eqref{SLP_PEM} fails to be Hausdorff continuous if any continuous (and non constant) $h$ takes on any of the special values (see Proposition \ref{prop:failure}). This leaves open the question whether comparison also holds for continuous $h$ which takes on a special phase value (see Open Question on page 23 of \cite{HL19}).

We have focused attention on proper elliptic pairs $(F, \Theta)$ which are {\em compatible} in the sense that (see Remark \ref{rem:CEP}):
\begin{equation}\label{compatible}
	\partial \Theta(x) = \{ (r,A) \in \Theta(x): F(x,r,A) = 0 \} \neq \emptyset, \ \ \text{for each} \ x \in \Omega.
\end{equation}
This ensures the correspondence between $\Theta$ subharmonics/superharmonics and admissible viscosity subsolutions/supersolutions of the equation determined by the operator $F$. Hence, given $F$ one can pass to the potential theory determined by $\Theta$ and then ``come back'' to the admissible viscosity formulation for the operator $F$. However, in situations in which $(F, \Theta)$ are a proper elliptic pair, but the compatibility \eqref{compatible} fails, one could decide to use the potential theory determined by $\Theta$ as a replacement for a viscosity solution treatment of the equation. In the constrained case, compatibility fails if the {\em non-degeneracy} condition \eqref{NDC} fails.

As a final introductory remark, we have limited the present investigation to the validity of the comparison principle \eqref{ulev}. Our comparison principles would yield uniqueness results for the Dirichlet problem on $\Omega$ in both the PDE and potential theoretic settings. In particular, we leave the important and interesting question of existence for the Dirichlet problem for a future work. An important feature of the methods pioneered by Harvey and Lawson is the determination (in terms of $\Theta$) of the suitable boundary convexity needed to obtain existence. The reader might wish to consult \cite{HL09}, \cite{HL11}, \cite{CP17} and \cite{HL18} for the use of a Perron method for existence in many situations which would cover some of the equations and potential theories considered here. Finally, the use of viscosity solutions with admissibility constraints has been extended to include some elements of nonlinear spectral theory in \cite{BP19}, including characterization of principal eigenvalues and existence of associated principal eigenfunctions by maximum principle methods.

\section{Proper elliptic maps and their subharmonics}\label{sec:PEMs}

In all that follows, $\Omega \subset \subset \R^N$ will be a bounded open connected set and  $\mathcal{S}(N)$ will denote the space of symmetric $N \times N$ matrices, which carries the usual partial ordering of the associated quadratic forms and $\lambda_1(A) \leq \cdots \leq \lambda_N(A)$ denote the ordered eigenvalues of $A \in \cS(N)$. We will denote by $\wp(\R \times \cS(N)):= \{ \Phi: \Phi \subset \R \times \cS(N) \}$ and use the notations $\overline{\Phi}, \Phi^{\circ}$ and $\Phi^c$ for the closure, interior and complement of $\Phi \in \wp(\R \times \cS(N))$. We will also make use of spaces of semicontinuous functions
$$
    \USC(\Omega) = \{ u: \Omega \to [-\infty, \infty): \ u(x_0) \geq \limsup_{x \to x_0} u(x),\ \forall \ x_0 \in \Omega\}
$$
and
$$
    \LSC(\Omega) = \{ u: \Omega \to (-\infty, \infty]: \ u(x_0) \leq \liminf_{x \to x_0} u(x),\ \forall \ x_0 \in \Omega\}.
$$

\subsection{Proper elliptic maps and their duals}
We begin with the definition of the class of set valued maps we will use, where we denote by
\begin{equation}\label{P}
\mathcal{P} := \{ P \in \cS(N): \ P \geq 0 \} = \{ P \in \cS(N): \ \lambda_1(P) \geq 0 \},
\end{equation}
\begin{equation}\label{N}
\mathcal{N} := \{ s \in \R: \ s \leq 0 \},
\end{equation}
and
\begin{equation}\label{Q}
\Q := \mathcal{N} \times \mathcal{P} = \{ (s,P) \in \R \times \cS(N): \ s \leq 0 \ \ \text{and} \ \  P \geq 0 \}.
\end{equation}

\begin{defn}\label{proper_ell_map} A map $\Theta: \Omega \to \wp(\R \times\cS(N))$ is said to be a {\em proper elliptic map} if for each $x \in \Omega$, one has
\begin{equation}\label{PE1}
\mbox{ $\Theta(x)$ is a closed, non empty and proper subset of $\R \times\cS(N)$}
\end{equation}
and 
\begin{equation}\label{PE2}
\Theta(x) + \Q \subset \Theta(x);
\end{equation}
that is, if $(r,A) \in \Theta$ the $(r + s, A + P) \in \Theta(x)$ for each $s \leq 0$ and $P \geq 0$. We will also say that $\Theta(x)$ is {\em $\cQ$-monotone} if \eqref{PE2} holds. 
\end{defn}

If $\mbox{$\mathcal{E}:= \left\{ \Phi \subset \R \times \cS(N): \Phi \right.$ is closed, non empty and proper with $ \left. \Phi + \Q \subset \Phi \right\}$ }$, then a proper elliptic map is just a set valued map taking values in $\mathcal{E}$, the collection of {\em proper elliptic sets}. Note that a proper elliptic map is {\em strict} as a set-valued map, namely it satisfies $\Theta(x) \neq \emptyset$ for each $x \in \Omega$ (see Chapter 1 of Aubin and Cellina \cite{AC84} for the elementary notions concerning set-valued maps).

An important example is provided by the constant map $\Theta(x) = \Q$ for all $x \in \Omega$ with $\Q$ defined by \eqref{Q}. Clearly the $\cQ$-monotonicity \eqref{PE2} is related to the monotonicity properties of $F(x,r,A)$ for proper and degenerate elliptic equations \eqref{FNLNG}. Notice that if $\Theta = \Theta(x,A)$ independent of $r \in \R$ then we can identify $\Theta$ with an {\em elliptic map} in the sense of \cite{CP17} and if $\Theta = \Theta(A)$ is also independent of $x$ we can identify $\Theta$ with an {\em elliptic set (Dirichlet set)} in the sense of \cite{HL09}.

A class of {\em dual maps} using the the Dirichlet dual, introduced by Harvey-Lawson \cite{HL09}, plays an essential role in this theory. 
\begin{defn}\label{dual_map} Let $\Theta: \Omega \to \wp(\R \times\cS(N))$ be a proper elliptic map. The {\em dual map} $\widetilde{\Theta}: \Omega \to \wp(\R \times\cS(N))$ is defined pointwise by
\begin{equation}\label{DM}
\widetilde{\Theta}(x) = \left[-\Theta(x)^{\circ}\right]^c = - \left[\Theta(x)^{\circ}\right]^c.
\end{equation}
\end{defn}
An essential example is given by the dual to constant map $\Q$, which is the constant map $\widetilde{\Theta}(x) = \widetilde{\Q}$ for each $x \in \Omega$ where
\begin{equation}\label{Q_dual}
\widetilde{\Q} = \{ (r,A) \in \R \times \cS(N): r \leq 0 \ \ \text{or} \ \ A \in \widetilde{\mathcal{P}} \}
\end{equation}
and
\begin{equation}\label{P_dual}
\widetilde{\mathcal{P}} = \{ (A \in  \cS(N): \lambda_N(A) \geq 0 \},
\end{equation}
as a simple calculation shows. We record the following elementary properties which will be used throughout.

\begin{prop}\label{Elem_Props} Let $\Theta: \Omega \to \wp(\R \times\cS(N))$ be a proper elliptic map. Then the following properties hold.
\begin{itemize}
\item[(a)] The dual $\widetilde{\Theta}: \Omega \to \wp(\R \times\cS(N))$ is a proper elliptic map. Moreover an arbitrary map $\Theta$ will be a proper elliptic map if its dual map is.
\item[(b)] The dual of $\widetilde{\Theta}$ is the map $\Theta$.
\item[(c)] The sum of $\Theta$ and $\widetilde{\Theta}$ satisfies
\begin{equation}\label{sum_of_duals}
    \Theta(x) +  \widetilde{\Theta}(x) \subset \widetilde{\cQ}, \quad \text{for each\ } x \in \Omega.
\end{equation}
    Moreover, $(r,A) \in \Theta(x)$ if and only if $(r+s,A+B) \in \widetilde{\Q}$ for each $(s,B) \in \widetilde{\Theta}(x)$.
\item[(d)] For each $x \in \Omega$ one has
\begin{equation}\label{bdy_theta}
    \partial \Theta(x) = \Theta(x) \cap  \left( - \widetilde{\Theta}(x) \right).
\end{equation}
\item[(e)] For each $x \in \Omega$ one has
\begin{equation}\label{closure_interior}
\Theta(x) = \overline{\Theta(x)^{\circ}}
\end{equation}
\end{itemize}
\end{prop}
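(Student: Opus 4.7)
My plan is to prove the five items in an order that exploits the ``solidity'' property (e) early, since it feeds directly into (a) and (b). The key elementary fact behind everything is the following consequence of $\cQ$-monotonicity: for any $(r,A)\in\Theta(x)$ and any $\veps>0$, the shifted point $(r-\veps, A+\veps I)$ lies in $\Theta(x)^\circ$. Indeed, an arbitrary $\rho$-perturbation $(r-\veps+t, A+\veps I+B)$ with $|t|<\rho$, $\|B\|<\rho$ and $\rho<\veps$ can be rewritten as $(r,A)+(-\veps+t, \veps I+B)$, where $-\veps+t<0$ and $\veps I+B\geq(\veps-\rho)I>0$, so the shift lies in $\cQ$ and the perturbed point lies in $\Theta(x)+\cQ\subseteq\Theta(x)$. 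Letting $\veps\to 0$ gives $\Theta(x)\subseteq\overline{\Theta(x)^\circ}$, while the reverse inclusion is free since $\Theta(x)$ is closed; this is (e).

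With (e) available, I would next dispose of (b) and (d) by a direct computation using the set-theoretic identity $(X^c)^\circ=(\overline{X})^c$. Setting $X=\Theta(x)^\circ$ gives $-\widetilde\Theta(x)=[\Theta(x)^\circ]^c$, from which (d) follows immediately as $\Theta(x)\cap(-\widetilde\Theta(x))=\Theta(x)\setminus\Theta(x)^\circ=\partial\Theta(x)$. Dualizing once more, $\widetilde{\widetilde{\Theta}}(x)=-[\widetilde\Theta(x)^\circ]^c=\overline{\Theta(x)^\circ}=\Theta(x)$ by (e), yielding (b).

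For the first part of (a), closedness, non-emptiness and properness of $\widetilde\Theta(x)$ are immediate: $\Theta(x)^\circ$ is non-empty by the solidity argument above (so $\widetilde\Theta(x)$ is proper), and $\Theta(x)^\circ$ is proper since $\Theta(x)$ is (so $\widetilde\Theta(x)$ is non-empty). For $\cQ$-monotonicity of $\widetilde\Theta$, I would first record the stronger statement $\Theta(x)^\circ+\cQ\subseteq\Theta(x)^\circ$, proved by shifting an ambient ball: if $B_\rho(r,A)\subseteq\Theta(x)$ and $(s,P)\in\cQ$, then $B_\rho(r+s,A+P)=B_\rho(r,A)+(s,P)\subseteq\Theta(x)+\cQ\subseteq\Theta(x)$. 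Now suppose $(r,A)\in\widetilde\Theta(x)$, i.e.\ $(-r,-A)\notin\Theta(x)^\circ$, and $(s,P)\in\cQ$; if $(-r-s,-A-P)\in\Theta(x)^\circ$ were to hold, then $(-r,-A)=(-r-s,-A-P)+(s,P)\in\Theta(x)^\circ+\cQ\subseteq\Theta(x)^\circ$, a contradiction. The ``moreover'' in (a) then comes from applying the first part of (a) to $\widetilde\Theta$ in place of $\Theta$ and invoking (b), which gives $\widetilde{\widetilde\Theta}=\Theta$.

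Finally, for (c), the inclusion $\Theta(x)+\widetilde\Theta(x)\subseteq\widetilde\cQ$ is the main point. If $(r+s,A+B)\notin\widetilde\cQ$ for some $(r,A)\in\Theta(x)$ and $(s,B)\in\widetilde\Theta(x)$, then by definition $r+s>0$ and $A+B$ is negative definite, so $(-r-s,-A-B)\in\cQ^\circ$; hence $(-s,-B)=(r,A)+(-r-s,-A-B)\in\Theta(x)+\cQ^\circ\subseteq\Theta(x)^\circ$ (by the same open-ball argument), contradicting $(s,B)\in\widetilde\Theta(x)$. For the characterization in the second half of (c), the forward direction is exactly what was just proved; conversely, if $(r,A)\notin\Theta(x)$, closedness of $\Theta(x)$ lets me pick $\delta>0$ small enough that $(r-\delta,A+\delta I)\notin\Theta(x)\supseteq\Theta(x)^\circ$, so $(s,B):=(-r+\delta,-A-\delta I)\in\widetilde\Theta(x)$ while $(r+s,A+B)=(\delta,-\delta I)\notin\widetilde\cQ$. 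The main obstacle here is really organizational: ensuring (e) is in place before (a) and (b) are attempted, and keeping the duality, $\cQ$/$\cQ^\circ$ and interior/closure operations consistently oriented; no deeper machinery is required.
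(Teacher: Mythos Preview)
Your argument is correct and follows essentially the same route as the paper. For (e) you use exactly the paper's device---the shift $(r-\veps,A+\veps I)\in\Theta(x)^\circ$---and (b), (d) are the same direct computations from the dual formula. Where the paper simply defers (a) and (c) to \cite{HL09} and \cite{CP17}, you supply explicit self-contained arguments; this is a genuine expository improvement, and your proofs of $\Theta(x)^\circ+\cQ\subseteq\Theta(x)^\circ$ and of the jet-addition inclusion $\Theta(x)+\widetilde\Theta(x)\subseteq\widetilde\cQ$ are clean and correct.

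One small logical wrinkle: in deducing the ``moreover'' of (a) you invoke (b), but your proof of (b) used (e), and (e) relied on $\Theta$ being proper elliptic---precisely what the ``moreover'' aims to establish for an \emph{arbitrary} $\Theta$. In fact the clause as literally stated needs qualification: take $\Theta(x)=\cQ\cup\{(1,-I)\}$, whose interior is $\cQ^\circ$, so its dual is $\widetilde\cQ$, proper elliptic, yet $\Theta(x)$ fails $\cQ$-monotonicity. The intended reading is presumably for maps satisfying $\Theta(x)=\overline{\Theta(x)^\circ}$, under which your reflexivity argument goes through cleanly. This is a defect in the statement rather than in your method, and the paper itself glosses over it by citing references.
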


\begin{proof} The claims (b) and (d) follow directly from the pointwise definition of the dual map \eqref{DM}. For the claim (e), recall that by definition the set $\Theta(x)$ is closed in the natural topology of $\R \times \cS(N)$ for each $x \in \Omega$, where $\Theta(x)$ has non empty interior. Each $(r,A) \in \Theta(x)$ can be written as the limit as $\veps \to 0^+$ of $(r - \veps , A + \veps I) \in \Theta(x) + (\mathcal{N}^{\circ} \times \mathcal{P}^{\circ})$, and $ \Theta(x) + (\mathcal{N}^{\circ} \times \mathcal{P}^{\circ}) = [\Theta(x)]^\circ$. The claims (a) and (c) make use of various known identities for elliptic sets and elliptic maps as presented in \cite{HL09} and \cite{CP17}.
\end{proof}

\subsection{Weakly subharmonic functions associated to proper elliptic maps}

The main concept in this paper concerns the {\em $\Theta$-subharmonic functions} on a domain $\Omega$ which are determined by a proper elliptic map $\Theta$ on $\Omega$. These upper semicontinuous functions are defined in a pointwise and viscosity sense by requiring that the relevant second order subdifferential lies in $\Theta(x)$. Following the approach of Harvey and Lawson, the natural class of {\em $\Theta$-superharmonic functions} will be characterized in terms of the $\wt{\Theta}$-subharmonic functions with respect to the dual map (see Definition \ref{defn:TSH} and Remark \ref{rem:Tsuper}). For $u$ twice differentiable, to be {\em $\Theta$-subharmonic on $\Omega$} means that 
\begin{equation}\label{ThetaSH_class}
J_xu:= (u(x), D^2 u(x)) \in \Theta(x), \quad \forall x \in \Omega;
\end{equation}
that is; that the (reduced) 2-jet $J_x u$ lies in the {\em constraint set} $\Theta(x)$ for each $x \in \Omega$. We will say that $u$ is {\em strictly $\Theta$-subharmonic in $\Omega$} if $J_xu \in  [\Theta(x)]^{\circ}$ for all $x \in \Omega$. For $u \in \USC(\Omega)$, one makes use of a viscosity definition. To this end, for each fixed $x_0 \in \Omega$, consider the {\em upper test jets}
\begin{equation}\label{superjets}
J^{+}_{x_0} u := \{ (\varphi(x_0),D^2 \varphi(x_0)):  \varphi \ \text{is} \ C^2 \ \text{near} \ x_0, \  u \leq \varphi \ \text{near $x_0$ with equality in} \ x_0 \}
\end{equation}
and the {\em lower test jets}
\begin{equation}\label{subjets}
J^{-}_{x_0} u := \{ (\varphi(x_0),D^2 \varphi(x_0)):  \varphi \ \text{is} \ C^2 \ \text{near} \ x_0, \  u \geq \varphi \ \text{near $x_0$ with equality in} \ x_0 \}.
\end{equation}

\begin{defn}\label{defn:TSH} Let $\Theta$ be a proper elliptic map on $\Omega$ and $x_0 \in \Omega$.
	\begin{itemize}
	\item[(a)] A function $u \in \USC(\Omega)$ will be called {\em $\Theta$-subharmonic in $x_0$} if
\begin{equation}\label{Tsubharm}
			J^{+}_{x_0}u \subset \Theta(x_0),
\end{equation}
	and $u$ is said to be {\em $\Theta$-subharmonic in $\Omega$} if \eqref{Tsubharm} holds for each $x_0$. The spaces of all such functions will be denoted by  $\TSH(x_0)$ and $\TSH(\Omega)$ respectively.
	\item[(b)] A function $u \in \LSC(\Omega)$ will be called {\em $\Theta$-superharmonic in $x_0$} if
\begin{equation}\label{Tsuperharm}
J^{-}_{x_0}u \subset \left[\Theta(x_0)\right]^{\circ}]^c.
\end{equation}
	\item[(c)] A function $u \in C(\Omega)$ will be called {\em $\Theta$-harmonic in $\Omega$} if it is both $\Theta$-subharmonic and $\Theta$-superharmonic in $\Omega$.
\end{itemize}
	\end{defn}
A few remarks about Definition \ref{defn:TSH} are in order.

\begin{rem}\label{rem:reduction} In the differential inclusion \eqref{Tsubharm} there is no constraint made in the gradient variable, which corresponds to the gradient free equations that we treat here. If one denotes by $\cJ^2 = \R \times \R^N \times \cS(N)$ by the space of 2-jets with jet coordinates $J=(r,p,A) \in \cJ^2$, then the inclusion \eqref{Tsubharm} is equivalent to
\begin{equation}\label{FSH}
	J^{2,+}_{x_0}u \in \cF(x_0)
\end{equation}
with a constraint set
\begin{equation}\label{subequation}
\cF(x_0) := \{ (r,p,A) \in \cJ^2: \ (r,A) \in \Theta(x_0) \}
\end{equation}
that is a subset of the (full) 2-jet space and where
$$
J^{2,+}_{x_0} u := \{ (\varphi(x_0), D\varphi(x_0), D^2 \varphi(x_0)):  \varphi \ \text{is} \ C^2 \ \text{near} \ x_0, \  u \leq \varphi \ \text{near $x_0$ with equality in} \ x_0 \}.
$$
is the set of second order superjets. The reduced formulation \eqref{Tsubharm} will be used throughout to simplify notation and to emphasize the gradient independent nature of the equations we consider.
	\end{rem}

\begin{rem}\label{rem:concact_jets}
	Many equivalent choices for the {\em upper/lower test functions} $\varphi$ which compete in \eqref{superjets}, \eqref{subjets} could be used in Definition \ref{defn:TSH}. For example, one could use {\em upper test jets} $J^{+}_{x_0} u$ corresponding to $\varphi = Q$ a quadratic polynomial. One could also assume that $(u - Q)$ has a strict maximum (of zero) in $x_0$ where for some $\veps > 0$ 
	\begin{equation}\label{TJ1}
		\mbox{$(u - Q)(x) \leq -\veps|x - x_0|^2$ \ for each $x$ near $x_0$ with equality at $x_0$},
			\end{equation}    
			or assume that
			\begin{equation}\label{TJ2}
			\mbox{$(u - Q)(x) \leq o(x - x_0)^2$ \ for each $x$ near $x_0$ with equality at $x_0$.}
				\end{equation}
				In all cases, the resulting spaces $\TSH(x_0)$ and $\TSH(\Omega)$ remain the same. For a proof, see Lemma A.1 of \cite{CHLP19} which treats general pointwise inclusions in the form \eqref{FSH}. 		
\end{rem}

\begin{rem}\label{rem:Tsuper}
 Since $[\Theta(x_0)^{\circ}]^c = - \widetilde{\Theta}(x_0)$ and $J^{+}_{x_0}(-u) = -J^{-}_{x_0}u$, one has
\begin{equation}\label{VSEM3}
\mbox{$u \in \LSC(\Omega)$ is $\Theta$-superharmonic in $x_0$ if and only if $-u \in \TSHD(x_0)$;}
\end{equation}
that is, $\Theta$-superharmonicity can be expressed in terms of subharmonicity for the dual map.
\end{rem} 


\begin{rem}\label{coherence} The following {\em coherence principle} for the classical and weak notions of $\Theta$-subharmonicity holds: let $u \in \USC(\Omega)$ be twice differentiable in $x_0 \in \Omega$ \footnote{ $u(x)=u(x_0) + \langle p, x-x_0 \rangle + \frac{1}{2} \langle A(x-x_0), x-x_0 \rangle + o(|x-x_0|^2)$ as $x \to x_0$ for some $(p,A) \in \R^N \times \cS(N)$. Hence $u$ is differentiable in $x_0$ with $p = Du(x_0)$ and we denote by $D^2u(x_0)$ the matrix $A$.}. Then
	\begin{equation*}\label{coherence1}
	u \in \TSH(x_0) \Leftrightarrow   D^2u(x_0)  \in \Theta(x_0).
	\end{equation*}
The forward implication makes use of the Taylor expansion for $u$ and the fact \eqref{closure_interior}, while the reverse implication uses only the positivity property $\Theta(x) + ( \{0\} \times \cP) \subset \Theta(x)$ for each $x \in \Omega$, which is contained in \eqref{PE2}. For more details see Remark 2.7 of \cite{CHLP19}. 	
\end{rem}

We will now give a useful alternate characterization of the spaces of  $\Theta$-subharmonic functions which exploits a pointwise characterization of {\em subaffine functions}. We recall that if $X \subset \R^N$ is open, $w \in \USC(X)$ is said to be {\em subaffine on $X$} and if for each domain $\Omega \subset \subset X$ and each affine function $a$ one has
\begin{equation}\label{SAX}
\mbox{	$w \leq a$ on $\partial \Omega \ \ \Rightarrow \ \ w \leq a$ on $\Omega$.} 
\end{equation} 
If \eqref{SAX} holds for each $\Omega$, we write $w \in \SA(X)$, where one knows that for $w \in \USC(X)$
\begin{equation}\label{SAC}
w \in \SA(X) \ \ \Leftrightarrow \ \ w \in \wt{\cP}(X),
\end{equation}
which means that for each $x_0 \in X$ and for each upper test function $\varphi$ for $w$ at $x_0$ one must have
\begin{equation}\label{PDSH}
D^2 \varphi(x_0) \in \wt{\cP} = \{ A \in \cS(N): \ \lambda_N(A) \geq 0\}.
\end{equation}
One also knows that $w \in \SA(X)$ if and only if for each $x_0 \in X$ there is no triple $(\veps, \rho, a)$ with $\veps, \rho > 0$ and $a$ affine such that
\begin{equation}\label{PSAC}
\mbox{$(w - a)(x_0) = 0$ \quad and \quad $(w - a)(x) \leq -\veps |x-x|^2$ for $x \in B_{\rho}(x_0)$}.
\end{equation}
We will write $w \in \SA(x_0)$ if \eqref{PSAC} holds.

The following lemma gives a pointwise characterization of the space $\QSHD(\Omega)$. Additional characterizations and properties of $\QSHD(\Omega)$ will be briefly discussed in Section \ref{sec:SAP}.

\begin{lem}\label{lem:QDSH_char} Given $w \in \USC(\Omega)$ and $x_0 \in \Omega$, one has $w \in \QSHD(x_0)$ if and only if
	\begin{equation}\label{QSHD_x0}
	w(x_0) \leq 0 \quad \text{or} \quad w \in \SA(x_0).
	\end{equation}
	\end{lem}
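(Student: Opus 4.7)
The plan is to unravel the definitions, noting the key structural fact that every upper test jet at $x_0$ has its first coordinate fixed: if $(r,A) \in J^+_{x_0}w$ with $(r,A) = (\varphi(x_0), D^2\varphi(x_0))$ for some upper test function $\varphi$ touching $w$ at $x_0$ with equality, then $r = \varphi(x_0) = w(x_0)$. So the $r$-coordinate of every element of $J^+_{x_0}w$ is the same number, namely $w(x_0)$.

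Next I would recall from \eqref{Q_dual} and \eqref{P_dual} that
\[
\wt{\cQ} = \bigl(\cN \times \cS(N)\bigr) \cup \bigl(\R \times \wt{\cP}\bigr),
\]
so that $(r,A) \in \wt{\cQ}$ iff $r \leq 0$ or $A \in \wt{\cP}$. Combining with the previous observation: $w \in \QSHD(x_0)$ means $J^+_{x_0} w \subset \wt{\cQ}$, that is, for every $(w(x_0),A) \in J^+_{x_0}w$ one has $w(x_0) \leq 0$ or $A \in \wt{\cP}$.

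For the ``$\Leftarrow$'' direction I split cases. If $w(x_0) \leq 0$, every upper test jet $(w(x_0),A)$ lies in $\cN \times \cS(N) \subset \wt{\cQ}$, so $w \in \QSHD(x_0)$. If instead $w \in \SA(x_0)$, then by the equivalence \eqref{SAC} applied pointwise (i.e.\ $w \in \wt{\cP}(x_0)$), every upper test jet satisfies $A \in \wt{\cP}$, so $(w(x_0),A) \in \R \times \wt{\cP} \subset \wt{\cQ}$, giving $w \in \QSHD(x_0)$ again.

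For the ``$\Rightarrow$'' direction, suppose $w \in \QSHD(x_0)$ and $w(x_0) > 0$; I must show $w \in \SA(x_0)$. Pick any $(w(x_0),A) \in J^+_{x_0}w$. Since $w(x_0) > 0$, membership in $\wt{\cQ}$ forces $A \in \wt{\cP}$. Hence every upper test jet has Hessian part in $\wt{\cP}$, which is precisely the pointwise characterization $w \in \wt{\cP}(x_0)$, and via the equivalence recorded in \eqref{SAC}--\eqref{PSAC} this is $w \in \SA(x_0)$.

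There is no real obstacle here: the statement is essentially a tautological decomposition of $\wt{\cQ}$ as a union of two ``slabs,'' using that the first jet coordinate is frozen at $w(x_0)$. The only mild point to be careful about is the edge case when $J^+_{x_0}w$ is empty (e.g.\ if $w(x_0) = -\infty$), in which case both sides of the ``iff'' hold vacuously, and the case $w(x_0) > 0$ where the dichotomy in the definition of $\wt{\cQ}$ collapses to the single condition $A \in \wt{\cP}$, allowing the identification with $\SA(x_0)$.
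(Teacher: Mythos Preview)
Your proof is correct and follows essentially the same idea as the paper's: both arguments exploit that the first coordinate of every upper test jet at $x_0$ equals $w(x_0)$, decompose $\wt{\cQ}$ as the union $\bigl(\cN \times \cS(N)\bigr)\cup\bigl(\R\times\wt{\cP}\bigr)$, and invoke the pointwise equivalence $\SA(x_0)\Leftrightarrow\wt{\cP}(x_0)$ (which both you and the paper cite via \eqref{SAC}). Your presentation is somewhat cleaner in that you argue directly rather than by contradiction in each direction.
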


\begin{proof}
	Assume that $w \in \QSHD(x_0)$ but that \eqref{QSHD_x0} fails; that is, 
	\begin{equation}\label{QSHDL1}
	J^+_{x_0}w \subset \wt{\cQ}
	\end{equation}
	and
		\begin{equation}\label{QSHDL2}
	w(x_0) > 0 \quad \text{and} \quad w \not\in \SA(x_0).
	\end{equation}
	Combining \eqref{QSHDL2} with the pointwise characterization \eqref{PSAC} one must have a triple $(\veps, \rho, a)$ such that
		\begin{equation}\label{QSHDL3}
	a(x_0) = w(x_0) > 0 \quad \text{and} \quad w(x) \leq a(x) - \veps|x-x_0|^2 \  \text{for} \ x \in B_{\rho}(x_0).
	\end{equation}
	The function $\varphi(\cdot):= a(\cdot) - \veps |\cdot - x_0|^2$ is then a $C^2$ upper test function for $w$ at $x_0$ and hence by \eqref{QSHDL1} one must have
	\begin{equation*}\label{QSHDL4}
	J_{x_0} \varphi = (a(x_0), -2 \veps I) \in \wt{\cQ} = \{ (r,A) \in \R \times \cS(N): r \leq 0 \ \text{or} \  A \in \wt{\cP} \},
	\end{equation*}
	but $r = a(x_0) > 0$ by \eqref{QSHDL3} and $\lambda_N(-2 \veps I) < 0$ so $A = -2 \veps I \not \in \wt{\cP}$, a contradiction.
	
	On the other hand, if \eqref{PSAC} holds, but $w \in \QSHD(x_0)$ fails, there must be an upper test function $\varphi$ for $w$ at $x_0$ which is $C^2$ near $x_0$ and satisfies
		\begin{equation}\label{QSHDL5}
	 (w - \varphi)(x_0) = 0 \quad \text{and} \quad (w  - \varphi)(x) \leq 0 \ \text{near} \ x_0
	\end{equation}
	with 
		\begin{equation}\label{QSHDL6}
	J_{x_0} = (\varphi(x_0), D^2 \varphi(x_0)) \not\in \wt{\cQ} = \{ (r,A) \in \R \times \cS(N): r \leq 0 \ \text{or} \  A \in \wt{\cP} \}.
	\end{equation}
	One has $\varphi(x_0) = w(x_0) \leq 0$ by \eqref{QSHDL5} and \eqref{PSAC} and hence \eqref{QSHDL6} yields $A = D^2 \varphi(x_0) \not\in \wt{\cP}$. However, since $w \in \SA(x_0)$ and $\varphi$ is a $C^2$ upper test function for $w$ at $x_0$, by \eqref{SAC} one must have $A = D^2 \varphi(x_0) \in \wt{\cP}$, a contradiction. 
\end{proof}

We are now ready for the alternate characterization. 

\begin{thm}\label{thm:TSH_char}
	Let $\Theta$ be a proper elliptic map on $\Omega$.
\begin{itemize}
	\item[(a)] A function $u \in \USC(\Omega)$ is $\Theta$-subharmonic in $x_0 \in \Omega$ if and only if
\begin{equation}\label{TSubChar}
\mbox{$u + v \in \QSHD(x_0)$ for every $v$ which is $C^2$ near $x_0$ with $J_{x_0}v \in \wt{\Theta}(x_0)$.}
\end{equation}
	\item[(b)] A function $u \in \LSC(\Omega)$ is $\Theta$-superharmonic in $x_0 \in \Omega$ if and only if
\begin{equation}\label{TSuperChar}
\mbox{$-u + v \in \QSHD(x_0)$ for every $v$ which is $C^2$ near $x_0$ with $J_{x_0}v \in \Theta(x_0)$.}
	\end{equation}
\end{itemize}
	\end{thm}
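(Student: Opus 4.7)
The plan is to reduce both parts to Proposition \ref{Elem_Props}(c), which says $(r,A) \in \Theta(x)$ if and only if $(r+s, A+B) \in \wt{\cQ}$ for every $(s,B) \in \wt{\Theta}(x)$, combined with the elementary additivity of upper test jets under a $C^2$ perturbation: if $v$ is $C^2$ near $x_0$, then $\varphi$ is an upper test function for $u$ at $x_0$ if and only if $\varphi + v$ is an upper test function for $u+v$ at $x_0$, so $J^+_{x_0}(u+v) = J^+_{x_0}u + J_{x_0}v$. Since Proposition \ref{Elem_Props}(b) gives $\wt{\wt{\Theta}} = \Theta$, and Remark \ref{rem:Tsuper} expresses $\Theta$-superharmonicity of $u$ as $\wt{\Theta}$-subharmonicity of $-u$, part (b) follows from part (a) applied to $-u$ with $\wt{\Theta}$ in place of $\Theta$. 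So I focus on (a).

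For the forward implication, assume $u \in \TSH(x_0)$ and let $v$ be $C^2$ near $x_0$ with $J_{x_0}v \in \wt{\Theta}(x_0)$. Given any $(r,A) \in J^+_{x_0}(u+v)$, pick an upper test function $\varphi$ at $x_0$ with $J_{x_0}\varphi = (r,A)$. Then $\varphi - v$ is an upper test function for $u$ at $x_0$, so by hypothesis
\[
    (r,A) - J_{x_0}v = J_{x_0}(\varphi - v) \in J^+_{x_0}u \subset \Theta(x_0).
\]
Adding back $J_{x_0}v \in \wt{\Theta}(x_0)$ and invoking \eqref{sum_of_duals} gives $(r,A) \in \wt{\cQ}$, so $u+v \in \QSHD(x_0)$.

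For the reverse implication I argue by contrapositive. Suppose $u \notin \TSH(x_0)$, so some $(r,A) \in J^+_{x_0}u$ lies outside $\Theta(x_0)$. The second assertion of Proposition \ref{Elem_Props}(c) then produces $(s,B) \in \wt{\Theta}(x_0)$ with $(r+s, A+B) \notin \wt{\cQ}$. Define the quadratic $v(x) := s + \tfrac{1}{2}\langle B(x-x_0), x-x_0\rangle$, which is $C^2$ everywhere and has $J_{x_0}v = (s,B) \in \wt{\Theta}(x_0)$. If $\varphi$ is an upper test function for $u$ at $x_0$ representing $(r,A)$, then $\varphi + v$ is an upper test function for $u+v$ at $x_0$, hence $(r+s, A+B) \in J^+_{x_0}(u+v)$. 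Since $(r+s, A+B) \notin \wt{\cQ}$, we conclude $u+v \notin \QSHD(x_0)$, contradicting \eqref{TSubChar}.

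The whole argument is essentially jet bookkeeping once the dual pairing identity in Proposition \ref{Elem_Props}(c) is available; I do not anticipate a substantive obstacle. The only point demanding a bit of care is the construction of the witness $v$ in the reverse direction, where one must remember that the reduced jet $J_{x_0}v = (v(x_0), D^2v(x_0))$ ignores the gradient, so the pure quadratic above suffices, and that $u + v \in \USC$ near $x_0$ so that $\QSHD(x_0)$-membership is well-defined.
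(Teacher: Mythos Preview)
Your argument is correct. The key observation that $J^+_{x_0}(u+v) = J^+_{x_0}u + J_{x_0}v$ whenever $v$ is $C^2$ near $x_0$, combined with the ``if and only if'' statement in Proposition \ref{Elem_Props}(c), reduces the theorem to pure jet algebra, and your handling of both directions is clean.

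The paper takes a different route: rather than invoking the dual characterization of $\Theta(x_0)$ from Proposition \ref{Elem_Props}(c), it works directly with the pointwise description of $\QSHD(x_0)$ given in Lemma \ref{lem:QDSH_char} (namely $w(x_0) \leq 0$ or $w \in \SA(x_0)$) and argues by contradiction in each direction, constructing explicit quadratic perturbations $v_\veps = v + \veps Q_{x_0} - \veps$ to push jets into interiors of the relevant sets. Your approach is shorter and more conceptual because it isolates the set-theoretic content (the biconditional in Proposition \ref{Elem_Props}(c)) from the analytic content (jet additivity under $C^2$ perturbation), and then simply composes them. The paper's approach, by contrast, is more self-contained in the sense that it does not rely on having already established the full ``if and only if'' in Proposition \ref{Elem_Props}(c); it effectively re-derives the needed instance of that equivalence inside the proof via the subaffine characterization, which makes the argument longer but perhaps more transparent about where the $\cQ$-monotonicity is actually used.
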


\begin{proof}
	Since (b) is equivalent to the statement that $-u \in \TSHD(x_0)$, claim (b) follows from claim (a) by duality since the dual of $\wt{\cQ}(x_0)$ is $\cQ(x_0)$. We argue by contradiction.
	
	Assume first that $u \in \TSH(x_0)$ but that \eqref{TSubChar} fails; that is, one has
	\begin{equation}\label{TSH1}
	J^+_{x_0}u \subset \Theta(x_0),
	\end{equation}
	but there exists $v$ which is $C^2$ near $x_0$ satisfying
	\begin{equation}\label{TSH2}
	J_{x_0}v \in \wt{\Theta}(x_0), \ \  (u + v)(x_0) > 0 \ \ \text{and} \ \ u + v \not\in \SA(x_0),
	\end{equation}
	where the last condition in \eqref{TSH2} means that there is a triple $(\veps,\rho, a)$ with
	\begin{equation}\label{TSH3}
	(u + v - a)(x_0) = 0, \ \ \text{and} \ \ (u + v - a)(x) \leq -\veps|x-x_0|^2 \ \ \text{for} \ x \in B_{\rho}(x_0). 
	\end{equation}
	For each $\veps > 0$, consider the function $v_{\veps}$ defined by
	\begin{equation}\label{TSH4}
	v_{\veps}:= v + \veps Q_{x_0} - \veps \ \ \text{with} \ \ Q_{x_0}(x):= \frac{1}{2} |x - x_0|^2.
	\end{equation}
	Since $J_{x_0}v \in \wt{\Theta}(x_0)$ by \eqref{TSH2} one has for every $\veps > 0$
	\begin{equation*}\label{TSH5}
	J_{x_0} v_{\veps}:= J_{x_0}v + (-\veps, \veps I) \in [\wt{\Theta}(x_0)]^{\circ} = -[\Theta(x_0)]^c
	\end{equation*}
	while
		\begin{equation*}\label{TSH6}
	(u + v_{\veps})(x_0) = (u + v)(x_0) - \veps > 0  \ \ \text{if} \ \veps < (u + v)(x_0)
	\end{equation*}
	and using \eqref{TSH3} with the affine function $a_{\veps}:= a - \veps$ one has
		\begin{equation*}\label{TSH7}
	(u + v_{\veps} - a_{\veps})(x_0) = 0 \quad \text{and} \quad 	(u + v_{\veps} - a_{\veps})(x) \leq - \frac{\veps}{2}|x -x_0|^2, \ \ \forall \ x \in B_{\rho}(x_0).
	\end{equation*}
	Hence with $0 < \veps < (u + v)(x_0)$, the function $\varphi_{\veps}:= -v_{\veps} + a_{\veps}$ is an upper test function for $u$ at $x_0$ and satisfies
	 \begin{equation}\label{TSH8}
	 J_{x_0} \varphi_{\veps}:= J_{x_0}v + (a(x_0), - \veps I) \in \Theta(x_0)
	 \end{equation}
	 by \eqref{TSH1}. However, since $a(x_0) > 0$ by \eqref{TSH2} and \eqref{TSH3}, then \eqref{TSH8} yields
	 \begin{equation}\label{TSH9}
	 - J_{x_0} v = J_{x_0}\varphi_{\veps}  + (- a(x_0), \veps I) \in [\Theta(x_0)]^{\circ},
	 \end{equation}
	 which means $J_{x_0}v \in -[\Theta(x_0)]^{\circ} = \wt{\Theta}(x_0)^c$ which contradicts the first condition in \eqref{TSH2}.
	 
	 On the other hand, if \eqref{TSubChar} holds, but $u \not\in \TSH(x_0)$, then there exists $\varphi$ which is $C^2$ near $x_0$ and satisfies for some $\rho > 0$
	\begin{equation}\label{TSH10}
	(u - \varphi)(x) \leq (u - \varphi)(x_0) = 0 \ \ \text{for each} \ x \in B_{\rho}(x_0).
	\end{equation} 
 and
	 \begin{equation}\label{TSH11} 
	 J_{x_0} \varphi \not\in \Theta(x_0),
	 \end{equation}
For each $\veps > 0$ consider the function $v_{\veps}:= - \varphi + \veps - \veps Q_{x_0}$ with $Q_{x_0}$ as in \eqref{TSH4}. These $\varphi_{\veps}$ are $C^2$ near $x_0$ and satisfy
\begin{equation}\label{TSH12}
	J_{x_0} v_{\veps} = - J_{x_0} \varphi + (\veps, -\veps I),
\end{equation}
where, by \eqref{TSH11}, one has $- J_{x_0} \varphi \in - [\Theta(x_0)]^c = [\wt{\Theta}(x_0)]^{\circ}$ and hence
 \begin{equation}\label{TSH13} 
	\mbox{$J_{x_0} v_{\veps} \in \wt{\Theta}(x_0)$ for each suffiently small $\veps > 0$.}
\end{equation}
However, using \eqref{TSH10} and the definitions of $v_{\veps}$ and $Q_{x_0}$, one has
	 \begin{equation}\label{TSH14} 
(u + v_{\veps})(x_0) = (u - \varphi)(x_0) + \veps > 0, 
\end{equation} 
	 \begin{equation}\label{TSH15} 
(u + v_{\veps} - \veps)(x_0) = (u - \varphi)(x_0) = 0 
\end{equation}
and
 \begin{equation}\label{TSH16} 
(u + v_{\veps} - \veps )(x) = (u - \varphi - \veps Q_0)(x) \leq -\frac{\veps}{2} |x - x_0|^2, \ \ \forall \ x \in B_{\rho}(x_0).
\end{equation}
The formulas \eqref{TSH15}-\eqref{TSH16} with the triple $(\veps/2, \rho, \veps)$ show that $(u + v_{\veps}) \not\in \SA(x_0)$, which combined with \eqref{TSH14} says that  $J_{x_0}v \not\in \wt{\Theta}(x_0)$ and hence \eqref{TSH13} contradicts \eqref{TSubChar} for the function $v_{\veps}$ with $\veps > 0$ and small.
\end{proof}

We will often use this characterization to show that that $u \in \USC(\Omega)$ belongs to $\TSH(\Omega)$ by using an argument by contradiction. We formalize this in the following remark. 

\begin{rem}\label{rem:use_TSH} Given $u \in \USC(\Omega)$ and $x_0 \in \Omega$. If $u \not\in \TSH(x_0)$ then there exists a function $v$ which is $C^2$ near $x_0$ with $J_{x_0}v  \in \widetilde{\Theta}(x_0)$ and there exists a triple $(\veps, \rho, a)$ such that
	\begin{equation}\label{useTSH1}
	(u + v)(x_0) > 0 \quad \text{and} \quad \left\{ \begin{array}{l} (u + v - a)(x_0) = 0 \\ (u + v -a)(x) \leq -\veps |x-x_0|^2, \ \forall \ x \in B_{\rho}(x_0) \end{array} \right. ,
	\end{equation}
since \eqref{useTSH1} is the meaning of $u + v \not\in \QSHD(x_0)$ (see Lemma \ref{lem:QDSH_char} and \eqref{PSAC}). Moreover, by reducing $\veps$ and altering the affine function $a$ if need be, we can assume that $v$ satisfies the stronger condition
	\begin{equation}\label{interior_test}
	J_{x_0}v \in [\widetilde{\Theta}(x_0)]^{\circ} = -[\Theta(x_0)^c].
	\end{equation}
by considering the perturbation $v_{\veps}: = v + \veps Q_{x_0} - \veps$ used in the proof of Theorem \ref{thm:TSH_char}.
\end{rem}

\section{Continuity of proper elliptic maps}\label{sec:continuity}

In preparation for the comparison principle for proper elliptic maps and their applications to comparison principles for admissible viscosity solutions for proper elliptic branches of second order gradient free fully nonlinear equations \eqref{FNLNG}, we present a few elementary properties of $\Theta$-subharmonic functions associated to proper elliptic maps. These properties will be needed in the proof of the subharmonic addition theorem \eqref{AAT:intro} (which is stated in Theorem \ref{thm:AAT}) and these properties depend on various degrees of continuity of the proper elliptic maps $\Theta$.

We begin with describing the notion continuity that we will require, that of {\em Hausdorff continuity}. Given $\Phi \subset \R \times \cS(N)$ and $\veps > 0$ we will denote by
$$
N_{\veps} \Phi = \{ (s,B) \in \R \times \cS(N): ||(s,B) - (r,A)|| < \veps \ \text{for some} \ (r,A) \in \Phi \} = \bigcup_{(r,A) \in \Phi} B_{\veps}(r,A),
$$
the {\em $\veps$-enlargement} of the subset $\Phi$ where $\displaystyle{ ||(r,A)||:= \max \left\{ |r|, \max_{1 \leq i \leq N} |\lambda_i(A)| \right\} }$ gives a norm on $\R \times \cS(N)$. Proper elliptic maps take values in $\mathcal{E} \subset \mathcal{K}(\R \times \cS(N))$, where $\mathcal{K}(\R \times \cS(N))$ are the closed subsets of $\R \times \cS(N)$. One knows that $(\mathcal{K}(\R \times \cS(N)), d_{\mathcal{H}})$ is a complete metric space with respect to the {\em Hausdorff distance}  defined by
\begin{equation}\label{HD}
\mbox{ $d_{\mathcal{H}}(\Phi, \Psi) := \inf \{ \veps > 0: \ \Phi \subset N_{\veps}(\Psi) \ \text{and} \ \Psi \subset N_{\veps}(\Phi) \}$.}
\end{equation}
See Proposition 7.3.3 and Proposition 7.3.7 of Burago, Burago and Ivanov \cite{BBI01} for details on this structure, where we note that since the subsets of $\R \times\cS(N)$ need not be bounded, the metric can take on the value $+ \infty$; in particular, one has
\begin{equation}\label{HDempty}
\mbox{ $d_{\mathcal{H}}(\Phi, \emptyset) = +\infty$ for each non empty $\Phi \in \mathcal{K}(\R \times \cS(N))$}
\end{equation} 
and with $\cJ := \R \times \cS(N)$ one has
\begin{equation}\label{HDall}
\mbox{ $d_{\mathcal{H}}(\Phi, \cJ) = +\infty$ for each closed $\Phi \subsetneq \cJ$.}
\end{equation}

\begin{defn}\label{defn:HC} An arbitrary map $\Theta: \Omega \to \mathcal{K}(\R \times \cS(N))$ will be called {\em Hausdorff continuous} on $\Omega$ if for each $x \in \Omega$ and $\eta > 0$ there exists $\delta = \delta(x, \eta) > 0$ such that
	\begin{equation}\label{HC}
	\mbox{ $d_{\mathcal{H}}(\Theta(x), \Theta(y)) < \eta$ for each $y \in \Omega$ such that $|x-y|< \delta$.}
	\end{equation}
\end{defn}

The following (elementary) remark will have important consequences for the continuity proper elliptic maps. 

\begin{rem}\label{rem:LUC} An arbitrary map $\Theta: \Omega \to \mathcal{K}(\R \times \cS(N))$ is Hausdorff continuous on $\Omega$ if and only if $\Omega$ is {\em locally uniformly Hausdorff continuous}; that is, for any $\Omega' \subset \subset \Omega$:
	\begin{equation}\label{UHC}
	\begin{array}{c}
	\text{ for each $\eta > 0$ there exists $\delta = \delta(\eta, \Omega') > 0$ such that} \\ 
	\text{$d_{\mathcal{H}}(\Theta(x), \Theta(y)) < \eta$ for each $x,y \in \Omega'$ such that $|x-y|< \delta$.}
	\end{array}
	\end{equation}
Indeed, if $\Theta$ is continuous on $\Omega \subset \R^N$ taking values in the metric space $(\mathcal{K}(\R \times \cS(N)), d_{\mathcal{H}})$, the Heine-Cantor theorem gives the uniform continuity of $\Theta$ on $\overline {\Omega '}$ compact (and hence on $\Omega '$). On the other hand, with $x\in \Omega$ arbitrary, it is enough to consider $\Omega' = B_{\rho(x)}(x) \subset \subset \Omega$ to find the continuity of $\Theta$ at $x$. 
\end{rem}

\begin{rem}\label{rem:C=HC} 

From here on, we will use the shorter term {\em continuous} in place of {\em Hausdorff continuous} for maps $\Theta: \Omega \to \mathcal{K}(\R \times \cS(N))$.
	\end{rem}

 For proper elliptic maps, (local) uniform continuity has useful equivalent formulations. 
  
\begin{prop}\label{UCHD} Let $\Theta$ be a proper elliptic map on $\Omega$ and let $\Omega' \subseteq \Omega$. Then the following are equivalent:
	\begin{itemize}
		\item[(a)] $\Theta$ is uniformly continuous on $\Omega'$, that is \eqref{UHC} holds;
		\item[(b)] For for each $\eta > 0$ there exists $\delta = \delta(\eta, \Omega') > 0$ such that
		\begin{equation}\label{uusc1}
		\mbox{$\Theta(B_{\delta}(x)) \subset N_{\eta}(\Theta(x))$ for each $ x \in \Omega'$;}
		\end{equation}
		\item[(c)] For for each $\eta > 0$ there exists $\delta = \delta(\eta, \Omega') > 0$ such that for each $x,y \in \Omega'$
		\begin{equation}\label{uusc2}
		\mbox{$|x -y| < \delta \ \Rightarrow \ \Theta(x) + (- \eta, \eta I) \subset \Theta(y)$.}
		\end{equation}
	\end{itemize}
\end{prop}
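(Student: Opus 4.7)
The plan is to prove the cyclic chain (a)$\Rightarrow$(b)$\Rightarrow$(c)$\Rightarrow$(a), allowing harmless constant-factor adjustments of the moduli $\eta,\delta$ between steps, which is enough for the ``for each $\eta > 0$ there exists $\delta$'' form of each statement. The implication (a)$\Rightarrow$(b) is an immediate unpacking of the definition \eqref{HD} of the Hausdorff distance: $d_{\mathcal H}(\Theta(x), \Theta(y)) < \eta$ gives in particular $\Theta(y) \subset N_\eta(\Theta(x))$, and collecting this over $y \in B_\delta(x) \cap \Omega'$ yields (b).

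The substantive step is (b)$\Rightarrow$(c), and it is here that proper ellipticity enters essentially, via the $\cQ$-monotonicity \eqref{PE2}. Fix $x,y \in \Omega'$ with $|x-y| < \delta$ and a point $(r,A) \in \Theta(x)$. Applying (b) centered at $y$ (permissible since $x \in B_\delta(y) \cap \Omega'$) furnishes $(r',A') \in \Theta(y)$ with $|r - r'| < \eta$ and $\|A - A'\|_{\mathrm{op}} < \eta$, the latter meaning that every eigenvalue of $A - A'$ lies in $(-\eta, \eta)$. Write the target point as
\[
(r-\eta,\, A+\eta I) \;=\; (r', A') + (s, P), \qquad s := r-\eta-r', \qquad P := A - A' + \eta I.
\]
Then $s \in (-2\eta, 0) \subset \mathcal N^\circ$ and the eigenvalues of $P$ lie in $(0, 2\eta)$, so $P \in \mathcal P^\circ$. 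Hence $(s,P) \in \cQ$, and $\cQ$-monotonicity of $\Theta(y)$ yields $(r-\eta, A+\eta I) \in \Theta(y)$, which is (c).

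The closing implication (c)$\Rightarrow$(a) is direct: for $x,y \in \Omega'$ with $|x-y| < \delta$ and $(r,A) \in \Theta(x)$, (c) places $(r-\eta, A+\eta I) \in \Theta(y)$ at distance exactly $\eta$ from $(r,A)$, so $\Theta(x) \subset \overline{N_\eta(\Theta(y))}$; swapping the roles of $x$ and $y$ gives the symmetric inclusion, hence $d_{\mathcal H}(\Theta(x), \Theta(y)) \leq \eta$. Feeding (c) with $\eta/2$ in place of $\eta$ then returns (a) with modulus $\eta$. The only genuine obstacle in the argument is bookkeeping — tracking strict vs.\ non-strict conventions in $N_\eta$ and $d_{\mathcal H}$ — which is absorbed by these trivial constant-factor rescalings; the essential content is the single displayed line above, where $\cQ$-monotonicity converts mere Hausdorff closeness into the one-sided translated inclusion (c).
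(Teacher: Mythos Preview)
Your proof is correct and follows essentially the same approach that the paper alludes to: the paper omits the details entirely, saying only that the result ``follows easily from the definitions of proper ellipticity and continuity'' and pointing to the analogous argument in \cite{CP17}; your cyclic argument (a)$\Rightarrow$(b)$\Rightarrow$(c)$\Rightarrow$(a) supplies precisely those details, with the $\cQ$-monotonicity step in (b)$\Rightarrow$(c) being the one place where proper ellipticity is genuinely used, exactly as the paper's terse remark suggests.
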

The proof follows easily from the definitions of proper ellipticity and continuity for maps  $\Theta: \Omega \to \mathcal{K}(\R \times \cS(N))$. See the proof of Proposition 3.3 in \cite{CP17} for the analogous result for {\em uniformly continuous elliptic maps} $\Theta$, which take values in $\mathcal{K}(\cS(N))$. Notice by interchanging the roles of $x$ and $y$ in \eqref{uusc2}, one also must have $\Theta(y) + (- \eta, \eta I) \subset \Theta(x)$ if $|x-y| < \delta$.

\begin{rem}\label{rem:continuity_types}
	Property (b) of Proposition \ref{UCHD} is precisely the notion that the set valued map $\Theta :  \Omega' \to  \wp(\R \times \cS(N))$ is {\em uniformly upper semicontinuous on $\Omega'$} (see Chapter 1 of Aubin and Cellina \cite{AC84} for the elementary notions concerning set-valued maps, including their semi-continuity). Hence, Proposition \ref{UCHD} says that for {\bf proper elliptic maps} the (local) uniform upper semicontinuity of $\Theta$ as a set value map is equivalent to the (local) uniform continuity of the function $\Theta$ taking values in the metric space $\left( \mathcal{K}(\R \times \cS(N)), d_{\mathcal{H}} \right)$. Property (c) in terms of translations by multiples of $(-1, I)$ is the form in which we will normally use the (local) uniform continuity.
\end{rem}

We now show that continuity of $\Theta$ passes to the dual map and that uniform continuity extends to the boundary.

\begin{prop}\label{prop:dual_extension} Let $\Theta: \Omega \to \mathcal{E} \subset \mathcal{K}( \R \times \cS(N))$ be an elliptic map. Then,
	\begin{itemize}
		\item[(a)] $\Theta$ is (uniformly) continuous on $\Omega$ if and only the dual map $\widetilde{\Theta}$ is (uniformly)  continuous on $\Omega$. 
	\item[(b)] If $\Theta$ is uniformly  continuous on $\Omega$, then $\Theta$ extends to a uniformly continuous elliptic map on $\overline{\Omega}$.
	\end{itemize}
\end{prop}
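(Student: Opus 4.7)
The plan for part (a) is to exploit the equivalent characterization of uniform continuity from Proposition \ref{UCHD}(c): for each $\eta > 0$ there exists $\delta > 0$ such that $|x-y| < \delta$ implies $\Theta(x) + (-\eta, \eta I) \subset \Theta(y)$. Since duality is involutive by Proposition \ref{Elem_Props}(b), it suffices to show this translation condition passes from $\Theta$ to $\widetilde{\Theta}$. Starting from $\Theta(y) + (-\eta, \eta I) \subset \Theta(x)$ (which in the uniform case on $\Omega'$ is simply the same condition with $x$ and $y$ swapped), rewrite it as $\Theta(y) \subset \Theta(x) + (\eta, -\eta I)$. Taking interiors, which commutes with translation (a homeomorphism), yields $\Theta(y)^\circ \subset \Theta(x)^\circ + (\eta, -\eta I)$. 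Passing to complements, which also commutes with translation, gives $[\Theta(y)^\circ]^c \supset [\Theta(x)^\circ]^c + (\eta, -\eta I)$. Finally, negating both sides and using $\widetilde{\Theta}(\cdot) = -[\Theta(\cdot)^\circ]^c$ produces $\widetilde{\Theta}(y) \supset \widetilde{\Theta}(x) + (-\eta, \eta I)$, which is exactly the translation condition for $\widetilde{\Theta}$. Another application of Proposition \ref{UCHD} finishes the forward direction, and the reverse implication (as well as the non-uniform continuity version applied pointwise) follows by the same argument applied to $\widetilde{\Theta}$ together with $\widetilde{\widetilde{\Theta}} = \Theta$.

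For part (b), the plan is to use the completeness of $(\mathcal{K}(\R \times \cS(N)), d_{\mathcal{H}})$. For each $x \in \partial \Omega$, choose any sequence $x_n \in \Omega$ with $x_n \to x$; uniform continuity of $\Theta$ makes $\{\Theta(x_n)\}$ a Cauchy sequence, hence convergent to some $\Theta_\infty \in \mathcal{K}(\R \times \cS(N))$. A standard triangle-inequality argument using uniform continuity shows $\Theta_\infty$ is independent of the approximating sequence, so we may unambiguously set $\Theta(x) := \Theta_\infty$. To verify that the extension takes values in the class of proper elliptic sets, note that $\Theta_\infty$ is automatically closed; it is non-empty and a proper subset since \eqref{HDempty} and \eqref{HDall} would force $d_{\mathcal{H}}(\Theta(x_n), \Theta_\infty) = +\infty$ were $\Theta_\infty$ equal to $\emptyset$ or to $\cJ$, contradicting Cauchy convergence; and it is $\cQ$-monotone because any $(r,A) \in \Theta_\infty$ can be approximated by some $(r_n, A_n) \in \Theta(x_n)$, so for every $(s,P) \in \cQ$ the monotonicity of $\Theta(x_n)$ gives $(r_n + s, A_n + P) \in \Theta(x_n)$, whose limit $(r+s, A+P)$ lies in the closed Hausdorff limit $\Theta_\infty$. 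Uniform continuity of the extension on $\overline{\Omega}$ is then obtained by passing to the limit in the uniform continuity estimate on $\Omega$, applied to approximating sequences for two boundary points.

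I expect the main technical obstacle to be the set-algebraic chain in part (a): while short, it requires getting all signs, inclusion directions, and the distribution of interior, complement, and negation over the translation correct. In part (b), the only nontrivial step is verifying $\cQ$-monotonicity of the limit, which relies on the standard fact that if $\Phi_n \to \Phi_\infty$ in $d_{\mathcal{H}}$ with $\Phi_\infty$ closed and $p_n \in \Phi_n$ converges to some $p$, then $p \in \Phi_\infty$; everything else reduces to essentially bookkeeping with the Hausdorff metric.
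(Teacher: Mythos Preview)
Your proposal is correct and follows essentially the same approach as the paper: part (a) via the translation characterization \eqref{uusc2} of Proposition~\ref{UCHD} and the set-algebraic chain passing through interior, complement, and negation (the paper only sketches this as ``a straightforward adaptation of Proposition 3.5 of \cite{CP17}''), and part (b) via Cauchy completeness of $(\mathcal{K}(\R\times\cS(N)), d_{\mathcal{H}})$ together with verification that \eqref{PE1}--\eqref{PE2} pass to the Hausdorff limit. One minor point: for the non-uniform (mere continuity) case in (a), your phrase ``applied pointwise'' is slightly loose, since the translation argument swaps the roles of $x$ and $y$; the paper handles this more cleanly by invoking Heine--Cantor (Remark~\ref{rem:LUC}) to reduce continuity on $\Omega$ to uniform continuity on each $\Omega'\subset\subset\Omega$, where the symmetric form of \eqref{uusc2} is available.
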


\begin{proof}  A straightforward adaptation of Proposition 3.5 of \cite{CP17}, which can be proven using the formulation \eqref{uusc2}, shows that $\Theta$ is uniformly  continuous on $\Omega$ if and only the dual map $\widetilde{\Theta}$ is. It is worth noting that the $\delta, \eta$ relation is the same for $\Theta$ and its dual. Finally, a map that is merely  continuous on $\Omega$ is uniformly continuous on $\overline{\Omega'}$, for any $\Omega' \subset \subset \Omega$, thus its dual $\widetilde{\Theta}$ is continuous on $\Omega$, which completes part (a).

Part (b) can be proven using the argument in \cite[Proposition 3.10]{CP17}; that is, for any $x_0 \in \partial \Omega$, define $\Theta(x_0)$ in the usual way as the limiting set of the Cauchy sequence $\{\Theta(x_k)\}$, where $\{x_k\} \subset \Omega$ is an arbitrary sequence converging to $x_0$. Then, one can verify that properties \eqref{PE1} and \eqref{PE2} pass to the limit. 
\end{proof}

We conclude this section with the elementary properties of $\Theta$-subharmonic functions associated to continuous proper elliptic maps.

\begin{prop}\label{prop:TSH_properties} Let $\Theta$ be a continuous elliptic map on $\Omega$.
	\begin{enumerate}
		\item {\rm (Maximum Property)} $u,v \in \TSH(\Omega) \ \Rightarrow \ \max\{ u, v \} \in \TSH(\Omega)$;
		\item {\rm (Sliding Property)} $u \in \TSH(\Omega) \ \Rightarrow \ u - m \in \TSH(\Omega)$ for each constant $m \geq 0$;
		\item {\rm (Families Locally Bounded Above Property)} Let $\mathfrak{F} \subset \TSH(\Omega)$ be a non empty family of functions which are locally uniformly bounded from above. Then the upper envelope $\displaystyle{u:= \sup_{f \in \mathfrak{F}} f}$ has upper semicontinuous regularization \footnote{ We recall that $\displaystyle{u^*(x):= \limsup_{r \to 0^+} \{ u(y): y \in \Omega \cap \overline{B}_r(x)\}}$ for each $x \in \Omega$.} $u^* \in \TSH(\Omega)$.
	\end{enumerate}
	If, in addition, $\Theta$ is uniformly continuous on $\Omega$,
	\begin{itemize}
		\item[(4)] {\rm (Uniform Translation Property)} All sufficiently small translates of $u \in \TSH(\Omega)$ have a fixed small quadratic perturbation which is $\Theta$-subharmonic on the domain of the translate. In particular, for each $\eta > 0$ if $\delta = \delta(\eta) > 0$ is chosen as in the formulation \eqref{uusc2} of uniform continuity then \footnote{ If $\Theta$ is a constant elliptic map, then a stronger consequence than (4) follows, namely $u_y= u(\cdot - y) \in \TSH(\Omega_{\delta})$ for all $y \in B_{\delta}(0)$. This property plays a key role in \cite{HL09} but may fail if $\Theta$ is not constant.}.
		\begin{equation}\label{UTF1}
		\mbox{  $u_{y; \eta} := u( \cdot + y) + \frac{\eta}{2}  ( |\cdot |^2 - \omega ) \in \TSH(\Omega_{\delta}), \ \forall \ y \in B_{\delta}(0)$,}
		\end{equation}
		with
		\begin{equation}\label{UTF2}
		\Omega_{\delta}:= \{ x \in \Omega: \ {\rm dist}(x, \partial \Omega) > \delta \} \quad \text{and} \quad \omega:= 2 + \sup_{x \in \Omega} |x|^2
		\end{equation} 
		\item[(5)] {\rm (Existence of Bounded $\Theta$-harmonics)}  There exist smooth bounded $\Theta$-harmonic functions on $\Omega$ of the form
		$$
		\varphi(\cdot):= - \tau + \frac{\tau}{2}|\cdot|^2
		$$
		for each sufficiently large $\tau$.
	\end{itemize}
Moreover, one also has the properties (1) - (5) for $\wt{\Theta}$-subharmonic functions since $\wt{\Theta}$ is (uniformly)  continuous if $\Theta$ is by Proposition \ref{prop:dual_extension}
\end{prop}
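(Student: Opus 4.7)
The plan is to prove each of the five properties in turn, using the viscosity formulation of Definition \ref{defn:TSH}, the $\cQ$-monotonicity of $\Theta$, and the (uniform) continuity of $\Theta$ as needed. Properties (1)--(2) are direct from the test function formulation: for (1), at $x_0$ one of $u,v$ realizes $\max\{u,v\}(x_0)$, so an upper test function for $\max\{u,v\}$ is an upper test function for that one, producing a jet in $\Theta(x_0)$; for (2), if $\varphi$ is an upper test function for $u-m$ at $x_0$ then $\varphi+m$ is an upper test function for $u$, giving $(\varphi(x_0)+m, D^2\varphi(x_0)) \in \Theta(x_0)$, from which $J_{x_0}\varphi \in \Theta(x_0)$ by adding $(-m,0) \in \cQ$ and using $\cQ$-monotonicity. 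Property (3) is the classical Perron-type argument: assuming by contradiction that $u^* \notin \TSH(x_0)$, one picks $\varphi \in C^2$ realizing a strict local max of $u^*-\varphi$ at $x_0$ with $J_{x_0}\varphi \notin \Theta(x_0)$; by the definition of $u^*$ one extracts $f_k \in \mathfrak{F}$ and $y_k \to x_0$ with $f_k(y_k) \to u^*(x_0)$; the maxima $z_k$ of the upper semicontinuous $f_k-\varphi$ on a small closed ball satisfy $z_k \to x_0$ and $f_k(z_k) \to \varphi(x_0)$ by the strict max property and the local upper bound on $\mathfrak{F}$. Then $\varphi+(f_k(z_k)-\varphi(z_k))$ is an upper test function for $f_k$ at $z_k$, so $(f_k(z_k), D^2\varphi(z_k)) \in \Theta(z_k)$; closedness of $\mathrm{graph}(\Theta)$ (a consequence of Hausdorff continuity) then forces $J_{x_0}\varphi \in \Theta(x_0)$, contradicting the choice of $\varphi$.

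The key new computation is for (4). Fix $\eta>0$ and the $\delta=\delta(\eta)$ furnished by Proposition \ref{UCHD}(c). For $y \in B_\delta(0)$, $x_0 \in \Omega_\delta$, and $\varphi$ an upper test function for $u_{y;\eta}$ at $x_0$, the function
$$
\psi(x) := \varphi(x-y) - \tfrac{\eta}{2}\bigl(|x-y|^2 - \omega\bigr)
$$
is a $C^2$ upper test function for $u$ at $x_0+y \in \Omega$, with 2-jet
$$
J_{x_0+y}\psi = \Bigl( \varphi(x_0) + \tfrac{\eta}{2}(\omega - |x_0|^2),\ D^2\varphi(x_0) - \eta I \Bigr) \in \Theta(x_0+y).
$$
Applying the uniform-continuity inclusion $\Theta(x_0+y) + (-\eta, \eta I) \subset \Theta(x_0)$ (valid since $|y|<\delta$) then yields
$$
\Bigl( \varphi(x_0) + \tfrac{\eta}{2}(\omega - |x_0|^2) - \eta,\ D^2\varphi(x_0) \Bigr) \in \Theta(x_0),
$$
and since $\omega - |x_0|^2 \geq 2$ for every $x_0 \in \Omega$ by the definition of $\omega$, the real coordinate of this jet is at least $\varphi(x_0)$. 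A final application of $\cN$-monotonicity then delivers $J_{x_0}\varphi \in \Theta(x_0)$, proving $u_{y;\eta} \in \TSH(\Omega_\delta)$.

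For (5), the plan is to apply the same machinery to show that for $\tau$ sufficiently large the 2-jet $J_x\varphi = (-\tau + \tfrac{\tau}{2}|x|^2, \tau I)$ sits on $\partial\Theta(x)$ uniformly in $x \in \Omega$, invoking $\cP$-monotonicity to absorb the large Hessian $\tau I$ into $\Theta(x)$ and a boundary-matching argument for the real coordinate, combined with the dual formulation via Proposition \ref{prop:dual_extension}(a) to obtain the $\wt{\Theta}$-subharmonicity of $-\varphi$ and hence the $\Theta$-superharmonicity of $\varphi$. The analogous statements for $\wt{\Theta}$-subharmonics follow from the symmetric roles of $\Theta$ and $\wt{\Theta}$ under Proposition \ref{prop:dual_extension}. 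The main obstacle is the exact matching in (4) of the quadratic perturbation $\tfrac{\eta}{2}(|x|^2 - \omega)$ with the uniform-continuity constant $\eta$ from Proposition \ref{UCHD}(c), which is precisely what the engineered constant $\omega = 2 + \sup_{\Omega}|x|^2$ is designed to accommodate.
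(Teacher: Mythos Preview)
Your arguments for (1)--(4) are correct, but they proceed by a genuinely different route from the paper. The paper proves (1), (2), and (4) by contradiction through the characterization of $\TSH(x_0)$ given in Theorem \ref{thm:TSH_char} (i.e., $u \in \TSH(x_0)$ iff $u+v \in \QSHD(x_0)$ for every $C^2$ function $v$ with $J_{x_0}v \in \wt{\Theta}(x_0)$), and for (3) it routes through that same characterization to reduce to the known constant-coefficient result for $\wt{\cQ}$ (Proposition B.1(F) of \cite{CHLP19}). You instead work directly with upper test functions and the $\cQ$-monotonicity of $\Theta(x)$; for (3) you use the closed-graph property of a Hausdorff-continuous set-valued map together with the standard strict-maximum extraction. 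Your approach is more self-contained and elementary; the paper's approach leverages the duality machinery and existing constant-coefficient theory. Both are valid, and in particular your computation for (4) is clean and the engineered constant $\omega$ does exactly what you say.

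Your treatment of (5), however, is not a proof but a vague plan, and it is missing the key mechanism. The issue is uniformity in $x$: for each fixed $x$ one can certainly find $\tau_x$ with $(-\tau_x, \tau_x I) \in \Theta(x)$ (pick any $(r,A) \in \Theta(x)$ and push along $\cQ$), but nothing in ``$\cP$-monotonicity'' or a ``boundary-matching argument for the real coordinate'' explains why a \emph{single} $\tau$ works for all $x \in \Omega$. The paper obtains this by extending $\Theta$ to the compact set $\overline{\Omega}$ (Proposition \ref{prop:dual_extension}(b)), invoking uniform continuity with $\eta = 1$ to get a $\delta$, covering $\overline{\Omega}$ by finitely many $\delta$-balls $B_\delta(y_k)$, and then taking $\tau = 1 + \max_k t_{y_k}$. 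Without some such compactness-plus-uniform-continuity step your sketch does not close, and you should supply it explicitly. (The subsequent passage to $\wt{\Theta}$, to get $\Theta$-superharmonicity of $\varphi$, is then just a repetition of the same argument for the dual map, taking the larger of the two $\tau$'s.)
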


\begin{proof}
	We will make use of the characterization formula \eqref{TSubChar} and argue by contradiction (as  discussed in Remark \ref{rem:use_TSH}) to prove the Maximum Property (1) and the Sliding Property (2). If (1) were false, then $w:=\max\{ u, v \} \not\in \TSH(x_0)$ for some $x_0 \in \Omega$ and there exist $\varphi$ which is $C^2$ near $x_0$ with  $J_{x_0}\varphi \in \widetilde{\Theta}(x_0)$ and a triple $(\veps, \rho, a)$ such that
	\begin{equation}\label{M1}
	(w + \varphi)(x_0) > 0 \quad \text{and} \quad \left\{ \begin{array}{l} (w + \varphi - a)(x_0) = 0 \\ (w + \varphi -a)(x) \leq -\veps |x-x_0|^2, \ \forall \ x \in B_{\rho}(x_0) \end{array} \right. .
	\end{equation}
	Since $u,v \leq w$ everywhere, in \eqref{M1} we can replace $w$ with $u$ when $w(x_0) = u(x_0)$ or $w$ with $v$ when $w(x_0) = v(x_0)$ to contradict $u,v \in \TSH(x_0)$.
	
	Similarly, if (2) were false, then for some $x_0 \in \Omega$ there exist $\varphi$ which is $ C^2$ near $x_0$ with  $J_{x_0}\varphi \in \widetilde{\Theta}(x_0)$ and a triple $(\veps, \rho, a)$ such that
	\begin{equation}\label{M2}
	(u - m + \varphi)(x_0) > 0 \quad \text{and} \quad \left\{ \begin{array}{l} (u - m + \varphi - a)(x_0) = 0 \\ (u - m + \varphi -a)(x) \leq -\veps |x-x_0|^2, \ \forall \ x \in B_{\rho}(x_0) \end{array} \right. .
	\end{equation}
	Setting  $\tilde{\varphi} := \varphi - m$ in \eqref{M2} gives a contradiction to $u \in \TSH(x_0)$ since $J_{x_0}\tilde{\varphi}  = J_{x_0} \varphi + (-m, 0) \in \widetilde{\Theta}(x_0)$.
	
	For the Families locally Bounded Above Property (3), it suffices to show that for each fixed $x_0 \in \Omega$ and each fixed $v$ which is $C^2$ near $x_0$ with $J_{x_0}v \in \wt{\Theta}(x_0)$ one has
	\begin{equation}\label{P1}
	w^*:= u^* + v \in \QSHD(x_0),
	\end{equation}
	where $w^*(x) = (u+v)^*(x)$ for all $x \in \Omega$ by the continuity of $v$. Using the local uniform continuity of $\wt{\Theta}$ with a sequence $\{ \veps_j \}_{j \in \N}$ such that $\veps_j \searrow 0$ as $j \to +\infty$ one has the existence of $\delta_j = \delta_j(\veps_j/2)$ for which 
	\begin{equation}\label{P2}
	J_{x_0}v + \left( - \frac{\veps_j}{2}, \frac{\veps_j}{2} I \right) \in \wt{\Theta}(x) \ \ \text{for each} \ x \in B_{\delta_j}(x_0).
	\end{equation}
	With $Q_{x_0}(\cdot) := \frac{1}{2} | \cdot - x_0|^2$ consider the sequence of functions
	\begin{equation*}\label{P3}
	w_j:= \sup_{f \in \mathfrak{F}} \left(f + v + \veps_j Q_{x_0} - \frac{\veps_j}{2} \right) = u + v + \veps_j Q_{x_0}\ \ j \in \N,
	\end{equation*}
	where $f + v + \veps_j Q_{x_0} - \frac{\veps_j}{2} \in  \QSHD(B_{\delta_j}(x_0))$ for each $j \in \N$ by \eqref{TSubChar} since $J_{x_0} \left( v + \veps_j Q_{x_0} - \frac{\veps_j}{2} \right) \in \wt{\Theta}(x_0)$ follows from \eqref{P2}. Property (3) for the constant coefficient gradient free proper elliptic map $\cQ$ on the open set $B_{\delta_j}(x_0)$ yields (see Proposition B.1 (F) of \cite{CHLP19}):
	\begin{equation}\label{P4}
	w^* + \veps_j Q_{x_0}  - \frac{\veps_j}{2} = u^* + v + \veps_j Q_{x_0}  - \frac{\veps_j}{2} = w_j^* \in  \QSHD(B_{\delta_j}(x_0)), \ \ j \in \N.
	\end{equation}
	If \eqref{P1} were false, then there exists a triple $(\veps, \rho, a)$ such that
	\begin{equation}\label{P5}
	w^*(x_0) > 0, \ \  (w^* - a)(x_0) = 0 \ \ \text{and} \  (w^* - a)(x) \leq - \veps |x - x_0|^2, \ \ \forall \ x \in B_{\rho}(x_0).
	\end{equation}
	By taking $j^*$ large enough to ensure $\veps_{j^*} \leq \veps$ and $\veps_{j^*}/2 < w^*(x_0)$ one has
	\begin{equation}\label{P6}
		 w_{j^*}^*(x_0) = w^*(x_0) - \frac{\veps_{j^*}}{2}  > 0 
	\end{equation}
	since $\veps_{j^*}/2 < w^*(x_0)$  and with the affine function $\wt{a}(\cdot) := a(\cdot) - \veps_{j^*}/2$ one has
	\begin{equation}\label{P7}
	(w_{j^*}^* - \wt{a})(x_0) = 0 \quad \text{and} \quad (w_{j^*}^*  - \wt{a})(x) \leq - \frac{\veps}{2} |x - x_0|^2, \ \ \forall \ x \in B_{\rho}(x_0).
	\end{equation}
	The relations in \eqref{P6} and \eqref{P7} say that $w^*_{j^*} \not\in \SA(x_0)$ which contradicts \eqref{P4}.
	
	For the Uniform Translation Property (4), we begin by noting that $ u_{y; \eta}$ is well defined and uniformly continuous on $\Omega_{\delta}$ for each $\eta > 0$ and $y \in B_{\delta}(0)$. It remains to show that for each $x_0 \in \Omega_{\delta}$ if $v \in C^2(\Omega)$ satisfies
	\begin{equation}\label{bl1}
	J_{x_0} v \in \widetilde{\Theta}(x_0) \quad \text{and} \quad ( u_{y; \eta} + v)(x_0) > 0
	\end{equation}
	then
	\begin{equation}\label{bl2}
	u_{y; \eta} + v \in \SA(x_0).
	\end{equation}
	Define the test function $\hat{v}_{y; \eta}$ by $\hat{v}_{y; \eta}(x):= v(x-y) + \frac{\eta}{2} ( |x-y|^2 - \omega)$ with $\omega$ as in \eqref{UTF1} - \eqref{UTF2}. Notice that
	\begin{equation}\label{bl3}
	J_{x_0 + y}\hat{v}_{y; \eta} = \left( v(x_0) +  \frac{\eta}{2} (|x_0|^2 - \omega), D^2v(x_0) + \eta I \right)
	\end{equation}
	where $\frac{\eta}{2}(|x_0|^2 - \omega) \leq - \eta$. Hence \eqref{bl3} yields
	$$
	J_{x_0 +y}\hat{v}_{y; \eta} = J_{x_0}v + (-\eta, \eta I) + \left( \frac{\eta}{2} (|x_0|^2 - \omega) + \eta , 0 \right)
	$$
	and hence for each $\eta > 0, y \in B_{\delta}(x_0)$ one has
	\begin{equation}\label{bl4}
	J_{x_0 + y}\hat{v}_{y; \eta} \in \widetilde{\Theta}(x_0 + y) + \left( \frac{\eta}{2} (|x_0|^2 - \omega) + \eta , 0 \right) \in \widetilde{\Theta}(x_0 + y)
	\end{equation}
	by the uniform continuity of $\widetilde{\Theta}$ and the non positivity of $\frac{\eta}{2}(|x_0|^2 - \omega) + \eta$. In addition one has
	\begin{equation}\label{bl5}
	u(x_0+y) + \hat{v}_{y; \eta}(x_0 + y) = u_{y; \eta}(x_0) + v(x_0) > 0
	\end{equation}
	where the positivity comes from \eqref{bl1}. Since $x_0 + y \in \Omega$, one has $u \in \TSH(x_0 + y)$ and hence \eqref{bl4} and \eqref{bl5} give $u + \hat{v}_{y; \eta} \in \SA(x_0 + y)$
	and hence
	\begin{equation}\label{bl6}
	u(\cdot + y) + \hat{v}_{y; \eta}(\cdot + y) \in \SA(x_0).
	\end{equation}
	However, using the definitions
	$$
	u(\cdot + y) + \hat{v}_{y; \eta}(\cdot + y) =  u_{y; \eta}(\cdot) + v(\cdot),
	$$
	and hence \eqref{bl6} gives the needed conclusion \eqref{bl2}. 
	
	For the existence of bounded $\Theta$-harmonic functions in (5), it suffices to show that there exists $\tau$ such that
	\begin{equation}\label{gTH1}
	J_x \varphi = ( -\tau , \tau I) \in \Theta(x) \cap \widetilde{\Theta}(x), \ \ \forall \ x \in \Omega.
	\end{equation}
	Moreover, it suffices to construct $\tau$ such that $( -\tau , \tau I) \in \Theta(x)$ holds, since a corresponding $\tilde{\tau}$ can be constructed for $\widetilde{\Theta}$ (which is also uniformly continuous by Proposition \ref{prop:dual_extension} (a)) and hence one can take the maximum of $\tau$ and $\tilde{\tau}$ by the monotonicity of proper elliptic maps.
	
	Note that $\Theta$ extends to a uniformly continuous map on $\overline{\Omega}$ by Proposition \ref{prop:dual_extension} (b), which is compact. Since $\Theta(y)$ is a proper elliptic set for each $y \in \overline{\Omega}$, there exists $t_y$ such that
	\begin{equation}\label{gTH2}
	\mbox{$(-t, tI) \in \Theta(y)$ for each $t \geq t_y$.}
	\end{equation}
	Indeed, pick any $(r_y,A_y) \in \Theta(y)$ and define $t_y:= \max \{ - r_y, \lambda_N(A_y) \}$ and one has
	$$
	(-t_y, t_y I) = (r_y, A_y) + (-r_y -t_y, t_yI - A_y) \in (r_y, A_y) + \Q \subset \Theta(y).
	$$
	Using the uniform continuity of $\Theta$ with $\eta = 1$, there exists $\delta = \delta(1) > 0$ such that
	\begin{equation}\label{gTH3}
	\mbox{$\Theta(y) + (-1, I) \in \Theta(x)$ for each $x,y \in \overline{\Omega}$ with $|x-y| < \delta$.}
	\end{equation}
	Since $\overline{\Omega}$ is compact there exists a finite open covering $\left\{ B_{\delta}(y_k) \right\}_{k=1}^n$ with $y_k \in \overline{\Omega}$. Combining \eqref{gTH2} and \eqref{gTH3}, one has for each $x \in \overline{\Omega}$
	$$
	\mbox{$(1 + t)(-1, I) \in \Theta(x)$ provided that $\displaystyle{t \geq T:= \max_{1 \leq k \leq n} t_{y_k}}$.}
	$$
	Picking $\tau = 1 + T$ yields the desired conclusion  $( -\tau , \tau I) \in \Theta(x)$ .
\end{proof}

A pair of remarks are in order concerning the properties (1)-(5) of Proposition \ref{prop:TSH_properties}.

\begin{rem}\label{rem:TSH_regularity} The properties (1) and (2) hold for arbitrary proper elliptic maps $\Theta$ since they are purely pointwise statements that require no regularity of $\Theta$. The argument used for property (3) does not use really require continuity \eqref{HC}. It would suffice to ask that there exists $\delta_j = \delta_j(\veps_j, x_0, D^2v(x_0))$ such that \eqref{P2} holds. This is because the argument in purely local near each fixed $x_0$, and fixed element of $\wt{\Theta}(x_0)$. On the other hand, uniform continuity is really used for properties (4) and (5).
\end{rem}

\begin{rem}\label{rem:TSH_use} The maximum property (1), sliding property (2) and the bounded $\Theta, \wt{\Theta}$-subharmonics in property (5) will be used to make suitable truncations in the reduction of the Subharmonic Addition Theorem for semi-continuous functions to the case in which the functions are bounded from below (see Lemma \ref{lem:AAT_reduction_BB} below). On the other hand, the families locally bounded above property (3) and the uniform translation property (4) will be used in the proof of the Subharmonic Addition Theorem in the case of functions which are bounded from below (see Lemma \ref{lem:SCA} below).
	\end{rem}

\section{A maximum principle for $\wt{\cQ}$-subharmonic functions}\label{sec:SAP}

In this section, we discuss some fundamental properties of the space of $\wt{\cQ}$-subharmonic functions which play a key role in our treatment of the comparison principle. We begin by noting that the space $\QSHD(X)$ with $X \subseteq \R^N$ open has also been studied in \cite{CHLP19} in the context of constant coefficient (and gradient free) subequation constraint sets. There one finds additional characterizations such as
\begin{equation}\label{SAPlusChar}
w \in \QSHD(X) \ \ \Leftrightarrow \ \ w \in \SA^+(X) \ \ \Leftrightarrow \ \ w^+ \in \SA(X),
\end{equation}
where $w^+$ is the positive part of $w$, $\SA(X)$ are the subaffine functions satisfying the comparison principle \eqref{SAX} for each $\Omega \subset \subset X$ and each affine function $a$. The space $\SA^+(X)$ consists of the {\em subaffine plus functions} on $X$ in which one uses positive affine functions $a$ in the comparison principle \eqref{SAX}; that is, $w \in \USC(X)$ is subaffine plus on $X$ if for each $\Omega \subset \subset X$ and each affine function $a$ which is non-negative on $\overline{\Omega}$, one has   
\begin{equation*}
\mbox{	$w \leq a$ on $\partial \Omega \ \ \Rightarrow \ \ w \leq a$ on $\Omega$.} 
\end{equation*} 
 The equivalence \eqref{SAPlusChar} is discussed and proven in \cite{CHLP19} (see  Theorem 9.7). 
 
An important property of subaffine plus functions is the validity of the following  {\em zero maximum principle}, which is a comparison principle between the subaffine plus functions $w$ and $0$. 

\begin{thm}\label{thm:ZMP}
For each $w \in \USC(\overline{\Omega}) \cap \QSHD(\Omega)$, one has
\begin{equation}\label{ZMP}
w \leq 0 \ \text{on} \ \partial \Omega  \ \ \Rightarrow \ \ w \leq 0 \ \text{on} \ \Omega,
\end{equation}
	\end{thm}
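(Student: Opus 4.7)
The plan is to reduce everything to the subaffine comparison property for $w^+$, using the characterization \eqref{SAPlusChar} stated just before the theorem: $w \in \QSHD(\Omega)$ if and only if $w^+ \in \SA(\Omega)$. So immediately $w^+$ is an upper semicontinuous function on $\overline{\Omega}$ (since $w$ is) which satisfies the subaffine comparison principle \eqref{SAX} on every subdomain $\Omega' \subset\subset \Omega$ against every affine function $a$.

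Next I would use the hypothesis $w \leq 0$ on $\partial \Omega$ together with the upper semicontinuity of $w$ on $\overline{\Omega}$ to shrink the domain. Fix an arbitrary $\eta > 0$ and consider the superlevel set
\[
K_\eta := \{x \in \overline{\Omega} : w(x) \geq \eta\}.
\]
Since $w \in \USC(\overline{\Omega})$, $K_\eta$ is a closed subset of the compact set $\overline{\Omega}$, hence compact; since $w \leq 0 < \eta$ on $\partial \Omega$, one has $K_\eta \cap \partial \Omega = \emptyset$, so $K_\eta \subset \Omega$. By compactness I can therefore choose an open set $\Omega_\eta$ with $K_\eta \subset \Omega_\eta \subset\subset \Omega$.

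Now I apply the subaffine property of $w^+$ with the constant (hence affine, and non-negative) function $a \equiv \eta$ on the subdomain $\Omega_\eta$. By construction, on $\partial \Omega_\eta \subset \Omega \setminus K_\eta$ one has $w < \eta$, and therefore $w^+ < \eta = a$. The membership $w^+ \in \SA(\Omega)$ then yields $w^+ \leq \eta$ on $\Omega_\eta$. Outside $\Omega_\eta$ (still inside $\Omega$) one is in the complement of $K_\eta$, where $w^+ < \eta$ directly. Hence $w \leq w^+ \leq \eta$ throughout $\Omega$, and since $\eta > 0$ was arbitrary, $w \leq 0$ on $\Omega$.

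The proof is essentially a one-step reduction, so there is no real obstacle; the only care needed is in the "shrink the boundary layer" step, where one must invoke upper semicontinuity of $w$ on $\overline{\Omega}$ to produce $\Omega_\eta \subset\subset \Omega$ with $w^+ < \eta$ on $\partial \Omega_\eta$. That is precisely what legitimizes the use of \eqref{SAX}, whose definition only applies to subdomains compactly contained in $\Omega$. Note the constant $a = \eta > 0$ is admissible here either as an affine function in the $\SA$-sense or as a non-negative affine function in the $\SA^+$-sense, so either characterization in \eqref{SAPlusChar} suffices.
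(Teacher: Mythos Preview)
Your proof is correct and follows essentially the same approach as the paper: invoke the characterization \eqref{SAPlusChar}, use upper semicontinuity and compactness to shrink to a subdomain $\Omega_\eta \subset\subset \Omega$ on whose boundary $w^+ < \eta$, apply the subaffine comparison with the constant $a \equiv \eta$, and let $\eta \to 0$. The only cosmetic difference is that the paper phrases the shrinking step as ``$w \le \veps$ in a neighborhood of $\partial\Omega$'' and uses the $\SA^+$ form of the characterization, whereas you make the construction of $\Omega_\eta$ explicit via the superlevel set $K_\eta$ and use the $w^+ \in \SA$ form; both are equivalent.
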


\begin{proof}
Once the equivalence \eqref{SAPlusChar} is established, it is sufficient to observe that since $w \in \USC(\overline{\Omega})$ and $w \le 0$ on $\overline{\Omega}$, by a standard compactness argument one has that for all $\veps > 0$ there exists $\delta = \delta(\veps) > 0$ such that $w \le  \veps$ in a neighborhood of $\partial \Omega$. Then, since $w$ is subaffine plus on $\Omega$, the comparison principle with the affine function $a \equiv \veps$ holds, hence $w \le \veps$ on $\Omega$. Letting $\veps \to 0$ gives the result.
\end{proof}

\begin{rem}\label{rem:CP_regular}
Combining Theorem \ref{thm:ZMP} with Theorem \ref{thm:TSH_char}(a) immediately gives the following comparison result for proper elliptic maps $\Theta$ on $\Omega$ between viscosity subharmonics and classical superharmonics: for each pair $u \in \USC(\overline{\Omega}) \cap \TSH(\Omega)$ and $v \in C^2(\Omega) \cap C(\overline{\Omega}) \cap \TSHD(\Omega)$ one has
\begin{equation}\label{Definition_Comparison}
u + v \leq 0 \ \text{on} \ \partial \Omega  \ \ \Rightarrow \ \ u + v \leq 0 \ \text{on} \ \Omega,
\end{equation}
since $w:= u + v \in \USC(\overline{\Omega}) \cap \QSHD(\Omega)$. The main result of this paper will be to show that a continuity property on $\Theta$ ensures that \eqref{Definition_Comparison} continues to hold if $v$ is merely $\USC(\overline{\Omega}) \cap \TSHD(\Omega)$.
\end{rem}

Finally, we note that since the constant proper elliptic map $\wt{\cQ}$ is trivially uniformly continuous on $\Omega$, we have the validity of all of the properties of Proposition \ref{prop:TSH_properties}. In addition, one has the following property for decreasing limits, which plays a key role in the proof of the comparison principle in the next section.

\begin{lem}\label{lem:DLP}
	 If $\displaystyle{\{w_n\}_{n \in \N} \subset \QSHD(\Omega)}$ is a decreasing sequence, then 
$$
	 w:= \lim_{n \to +\infty} w_n \in \QSHD(\Omega).
 $$
	\end{lem}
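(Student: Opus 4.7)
The plan is to argue by contradiction via a standard viscosity perturbation, using the pointwise characterization of $\QSHD$ supplied by Lemma~\ref{lem:QDSH_char} together with a Dini-type argument for decreasing sequences of upper semicontinuous functions. Note first that $w$ is automatically in $\USC(\Omega)$ since it is the pointwise decreasing limit of upper semicontinuous functions.

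Suppose, for contradiction, that $w \notin \QSHD(x_0)$ for some $x_0 \in \Omega$. By Lemma~\ref{lem:QDSH_char} this means $w(x_0) > 0$ and $w \notin \SA(x_0)$, so there exist a $C^2$ upper test function $\varphi$ for $w$ at $x_0$ with
\[
J_{x_0}\varphi = (\varphi(x_0), D^2\varphi(x_0)) \notin \wt{\cQ},
\]
which by the explicit form of $\wt{\cQ}$ in \eqref{Q_dual} means $\varphi(x_0) = w(x_0) > 0$ \emph{and} $D^2\varphi(x_0) \notin \wt{\cP}$, i.e.\ $\lambda_N(D^2\varphi(x_0)) < 0$. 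I would then perturb by setting $\psi(x) := \varphi(x) + \veps |x - x_0|^2$ with $\veps > 0$ small enough that $D^2\psi(x_0) = D^2\varphi(x_0) + 2\veps I$ is still negative definite. By construction $w - \psi$ attains a \emph{strict} maximum $0$ at $x_0$ on some small closed ball $\overline{B_\rho(x_0)} \subset \Omega$.

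Next I would apply a Dini-type argument to the decreasing sequence of upper semicontinuous functions $w_n - \psi$ on the compact set $\overline{B_\rho(x_0)}$: each attains its maximum $M_n$ at some $x_n$, the sequence $M_n$ is nonincreasing and bounded below by $(w - \psi)(x_0) = 0$, and a standard calculation using upper semicontinuity shows $\limsup_n w_n(x_n) \leq w(x^*)$ for any cluster point $x^*$ of $\{x_n\}$. Combined with the strict maximum of $w - \psi$ at $x_0$, this forces $x_n \to x_0$ and $M_n \to 0$, hence $w_n(x_n) \to w(x_0) > 0$. Now the shifted function $\psi + (w_n(x_n) - \psi(x_n))$ is a $C^2$ upper test function for $w_n$ at $x_n$, so $w_n \in \QSHD(x_n)$ yields
\[
(w_n(x_n), D^2\psi(x_n)) \in \wt{\cQ}.
\]
For $n$ large, $w_n(x_n) > 0$, so by the structure of $\wt{\cQ}$ we must have $D^2\psi(x_n) \in \wt{\cP}$. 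Since $\wt{\cP}$ is closed and $D^2\psi$ is continuous, passing to the limit $x_n \to x_0$ gives $D^2\psi(x_0) \in \wt{\cP}$, contradicting the fact that $D^2\psi(x_0)$ is negative definite.

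The main ingredient is the Dini convergence $x_n \to x_0$ and $M_n \to 0$, which is a routine fact for decreasing sequences of $\USC$ functions but deserves to be written out carefully, as it is what allows the pointwise obstruction at $x_0$ to be propagated to each $w_n$ via a genuine $C^2$ upper test. Everything else is formal manipulation of the definitions, and no continuity hypothesis on the underlying elliptic map is needed because the target set $\wt{\cQ}$ is constant and closed.
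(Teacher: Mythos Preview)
Your proof is correct. The perturbation to a strict maximum, the Dini-type argument that $x_n \to x_0$ and $M_n \to 0$, and the passage to the limit using the closedness of $\wt{\cP}$ are all carried out properly; the only point worth making explicit is that for large $n$ the maximizer $x_n$ lies in the \emph{open} ball $B_\rho(x_0)$, so that $\psi + M_n$ is a genuine local upper test function for $w_n$ at $x_n$, but this follows immediately from $x_n \to x_0$.

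By contrast, the paper does not argue directly: it simply invokes Proposition~B.1~(E) of \cite{CHLP19}, which establishes the decreasing-limit property for general constant-coefficient subequation constraint sets $\cF \subset \R \times \R^N \times \cS(N)$, and then specializes to the gradient-free set $\cF = \{(r,p,A): (r,A) \in \wt{\cQ}\}$. Your route is more self-contained and exploits the concrete structure of $\wt{\cQ}$ via Lemma~\ref{lem:QDSH_char}, which lets you reduce to the negative-definite Hessian obstruction and close the argument with an elementary continuity step. The cited result, on the other hand, handles arbitrary closed constraint sets in one stroke, at the cost of relying on external machinery.
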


\begin{proof}
	This is a special case of Proposition B.1 (E) of \cite{CHLP19} for the constant coefficient gradient free subequation constraint set 
	$$
	\cF:= \{ (r,p,A) \in \R \times \R^N \times \cS(N): \ (r,A) \in \wt{\cQ} \}.
	$$
\end{proof}

\section{The comparison principle for continuous proper elliptic maps}\label{sec:CP}

The purpose of this section is to prove the following comparison principle in nonlinear variable coefficient gradient-free potential theory.

\begin{thm}[Comparison principle: potential theoretic version]\label{thm:CP} Let $\Theta$ be a continuous proper elliptic map on $\Omega$. Then the comparison principle holds; that is,
if $u \in \USC(\overline{\Omega})$ and $v \in \LSC(\overline{\Omega})$ are $\Theta$-subharmonic and $\Theta$-superharmonic respectively in $\Omega$, then
\begin{equation}\label{CP}
\mbox{$u \leq v$ on $\partial \Omega \ \ \Rightarrow \ \ u \leq v$ in $\Omega$.}
\end{equation}
\end{thm}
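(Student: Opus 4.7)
The plan is to carry out the monotonicity--duality strategy sketched in the introduction, which reduces comparison for the pair $(u,v)$ to a subharmonic addition theorem plus the zero maximum principle for $\widetilde{\cQ}$-subharmonics.

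First, I would translate the $\Theta$-superharmonicity of $v$ into subharmonicity for the dual map. By Remark \ref{rem:Tsuper}, the function $\tilde u := -v$ lies in $\USC(\overline{\Omega}) \cap \TSHD(\Omega)$, because $v \in \LSC(\overline{\Omega})$ and $\Theta$-superharmonicity of $v$ is equivalent to $-v \in \TSHD$. Moreover, by Proposition \ref{prop:dual_extension}(a), the continuity of $\Theta$ guarantees continuity of $\widetilde{\Theta}$, so both maps $\Theta$ and $\widetilde{\Theta}$ are continuous proper elliptic maps on $\Omega$, and the full machinery of Section \ref{sec:PEMs} and Section \ref{sec:continuity} applies to either of them.

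Second, I would invoke the subharmonic addition theorem \eqref{AAT:intro} (Theorem \ref{thm:AAT}). Since $u \in \USC(\overline{\Omega}) \cap \TSH(\Omega)$ and $\tilde u \in \USC(\overline{\Omega}) \cap \TSHD(\Omega)$, this theorem yields
\[
w := u + \tilde u = u - v \in \USC(\overline{\Omega}) \cap \QSHD(\Omega).
\]
The subharmonic addition theorem is the step that genuinely uses the Hausdorff continuity of $\Theta$ (through the $\delta$--$\eta$ formulation \eqref{uusc2}, sup-convolution regularization, and the Jensen-type lemma), but at this stage it is assumed.

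Third, I would feed this into the zero maximum principle. The boundary hypothesis $u \leq v$ on $\partial \Omega$ reads $w \leq 0$ on $\partial \Omega$. Applying Theorem \ref{thm:ZMP} to $w \in \USC(\overline{\Omega}) \cap \QSHD(\Omega)$ gives $w \leq 0$ on $\Omega$, that is $u \leq v$ in $\Omega$, as required.

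The conceptual heart of the argument is therefore concentrated in the subharmonic addition theorem and the zero maximum principle; the comparison proof itself is essentially a two-line chain, with the dualization $v \leftrightarrow -v$ providing the bridge. The only nontrivial thing to check on the spot is that $w = u - v$ genuinely lies in $\USC(\overline{\Omega})$ (immediate from $u \in \USC$ and $-v \in \USC$) and that the boundary inequality is preserved, both of which are straightforward.
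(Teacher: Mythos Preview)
Your approach is essentially the paper's own proof, and the overall strategy (dualize $v \mapsto \tilde u = -v$, apply subharmonic addition to get $w = u + \tilde u \in \QSHD$, then invoke the zero maximum principle) is exactly right.

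There is one small but genuine technical gap. Theorem \ref{thm:AAT} as stated requires $\Theta$ to be \emph{uniformly} continuous on the open set $X$, whereas the hypothesis of Theorem \ref{thm:CP} only gives you (Hausdorff) continuity of $\Theta$ on $\Omega$, which by Remark \ref{rem:LUC} is equivalent to \emph{local} uniform continuity. Since $\Omega$ is open and not compact, you cannot directly apply Theorem \ref{thm:AAT} with $X = \Omega$. The paper handles this by first taking an arbitrary $X \subset\subset \Omega$; on $\overline X$ the map $\Theta$ is uniformly continuous (Heine--Cantor), so Theorem \ref{thm:AAT} gives $w \in \QSHD(X)$, and since $X \subset\subset \Omega$ was arbitrary and being $\wt\cQ$-subharmonic is a local property, one concludes $w \in \QSHD(\Omega)$. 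With this one-line localization step inserted, your proof is complete and matches the paper's.
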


\begin{proof}
By exploiting Harvey-Lawson duality and the zero maximum principle for $\wt{\cQ}$-subharmonic functions (Theorem \ref{thm:ZMP}), the proof of Theorem \ref{thm:CP} reduces to the proof of the following result. Recall that $v$ is $\Theta$ superharmonic if and only if $\tilde{u} = -v$ is $\wt{\Theta}$-subharmonic.

\begin{thm}[Subharmonic Addition]\label{thm:AAT} Let $\Theta$ be a uniformly continuous proper elliptic map on an open set $X \subset \subset \R^N$. For each pair of functions, $ u, \wt{u} \in \USC(\overline{X})$ one has
	\begin{equation}\label{AAT}
	u \in \TSH(X) \quad \text{and} \quad \wt{u} \in \TSHD(X) \ \ \Rightarrow \ \ u + \wt{u} \in \QSHD(X)
	\end{equation}
\end{thm}

Indeed, let $u$ and $v$ be as in the statement of Theorem \ref{thm:CP} and assume the validity of Theorem \ref{thm:AAT}. Set $\wt{u} := - v \in \USC(\overline{\Omega}) \cap \TSHD(\Omega)$. The comparison principle \eqref{CP} is then equivalent to 
\begin{equation}\label{alt_CP}
\mbox{$u + \wt{u} \leq 0$ on $\partial \Omega \ \ \Rightarrow \ \ u + \wt{u} \leq 0$ in $\Omega$.}
\end{equation}
Now set $w:= u + \wt{u}$, and consider an arbitrary open set $X \subset \subset \Omega$. Since $\Theta$ is uniformly continuous on $\overline X$, one has that $w \in \QSHD(X)$ by Theorem \ref{thm:AAT}. Since $X \subset \subset \Omega$ is arbitrary, $w \in \QSHD(\Omega)$, and \eqref{alt_CP} is a consequence of the zero maximum principle Theorem \ref{thm:ZMP} for $w$. This completes the proof of Theorem \ref{thm:CP}, modulo the proof of Theorem \ref{thm:AAT}.
\end{proof}

\begin{proof}[Proof of Theorem \ref{thm:AAT}] The proof involves three steps:
\begin{enumerate}
	\item prove \eqref{AAT} under the additional assumption that $u, \wt{u}$ are {\em semi-convex}
	and hence almost everywhere twice differentiable (Lemma \ref{lem:AAT_SC} and Lemma \ref{lem:Slod});
	\item reduce the general case to the case of $u, \wt{u}$ {\em bounded from below} by suitable truncations and limit procedures (Lemma \ref{lem:AAT_reduction_BB});
	\item prove \eqref{AAT} for $u, \wt{u}$ semi-continuous and bounded below by taking  decreasing limits of suitable quadratic perturbations of {\em sup-convolution approximations} which are semi-convex and locally subharmonic (Lemma \ref{lem:SCA}). 
\end{enumerate}

\vspace{1ex}
\noindent {\bf Step 1:} {\em Prove \eqref{AAT} in the special case 
of $u, \wt{u}$ semi-convex on $X \subset \subset \R^N$ for an arbitrary proper elliptic map (not necessarily continuous).}

\vspace{1ex}
Recall that if $\lambda > 0$,  a function $u : X \to \R$ is said to be {\em $\lambda$-semi-convex} if $u + \lambda Q_0$ is a convex function, where $Q_0(x) = \frac{1}{2}|x|^2$. 

\begin{lem}\label{lem:AAT_SC}
	 Let $\Theta$ be a proper elliptic map on $X \subset \subset \R^N$. If $u, \wt{u} \in \USC(\overline{X})$ are $\lambda$-semi-convex, then \eqref{AAT} holds; that is, 
	 \begin{equation}\label{AAT1}
	 	u \in \TSH(X) \ \text{and} \ \wt{u} \in \TSHD(X) \ \ \Rightarrow \ \ u + \wt{u} \in \QSHD(X).
	 \end{equation}
	 Moreover, one has the comparison principle \eqref{CP} on $X$ for $u$ and $v := - \wt{u} \in \LSC(\overline{X})$ which is $\Theta$-superharmonic in $X$.
	\end{lem}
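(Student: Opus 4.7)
The strategy is the classical Harvey--Lawson route for semi-convex functions: reduce $\Theta$-subharmonicity and $\wt\Theta$-subharmonicity to a.e.\ pointwise information on the $2$-jets via Alexandrov's theorem, add the two $2$-jets using the sum-of-duals identity \eqref{sum_of_duals}, and then recover $\wt\cQ$-subharmonicity in the full viscosity sense by appealing to the Jensen--Slodkowski type lemma \ref{lem:Slod}.

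More precisely, since $u$ and $\wt u$ are $\lambda$-semi-convex on $X$, Alexandrov's theorem gives that both are twice differentiable at a.e.\ $x \in X$; hence there is a set $E \subset X$ of full measure at which $D^2u(x)$ and $D^2\wt u(x)$ simultaneously exist in the Alexandrov sense. The coherence principle of Remark \ref{coherence}, whose reverse implication uses only the positivity property $\Theta(x) + (\{0\}\times\cP) \subset \Theta(x)$ from \eqref{PE2}, then applies to both $u$ and $\wt u$ at each $x \in E$, yielding
\[
(u(x), D^2u(x)) \in \Theta(x) \quad \text{and} \quad (\wt u(x), D^2\wt u(x)) \in \wt\Theta(x), \qquad x \in E.
\]
Adding these two jets and using \eqref{sum_of_duals} gives
\[
\bigl( (u+\wt u)(x),\ D^2(u+\wt u)(x) \bigr) \in \Theta(x) + \wt\Theta(x) \subset \wt\cQ, \qquad x \in E,
\]
where we note crucially that $\wt\cQ$ is a constant (and indeed independent of $x$) proper elliptic set, so the $x$-dependence of $\Theta$ is entirely absorbed on the right-hand side.

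Now $w := u + \wt u$ is itself $2\lambda$-semi-convex on $X$ and satisfies the pointwise jet inclusion $(w(x), D^2 w(x)) \in \wt\cQ$ almost everywhere on $X$ at its points of twice differentiability. The Jensen--Slodkowski type Lemma \ref{lem:Slod} then upgrades this almost-everywhere classical information to the everywhere viscosity statement $w \in \QSHD(X)$. This proves \eqref{AAT1}. The comparison assertion is then immediate: $w = u - v \in \USC(\overline X) \cap \QSHD(X)$ and the hypothesis $u \leq v$ on $\partial X$ reads $w \leq 0$ on $\partial X$, so the zero maximum principle Theorem \ref{thm:ZMP} yields $w \leq 0$, i.e.\ $u \leq v$, on $X$.

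The main technical obstacle in this argument is the passage from almost-everywhere classical jet information to everywhere viscosity subharmonicity: the Alexandrov points need not contain the critical points where an upper test function touches $w$ from above. This is exactly the content of Lemma \ref{lem:Slod} (a Jensen/Slodkowski density-type argument), and once that lemma is granted the rest of the proof is a straightforward application of coherence and the duality identity \eqref{sum_of_duals}. It is worth stressing that no continuity of the map $\Theta$ is needed at this stage: the only structural property invoked beyond $\cQ$-monotonicity is the pointwise sum relation $\Theta(x)+\wt\Theta(x)\subset\wt\cQ$, which holds automatically for proper elliptic maps.
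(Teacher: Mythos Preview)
Your proof is correct and follows essentially the same route as the paper: Alexandrov's theorem plus the coherence principle to get a.e.\ jet inclusions, the sum-of-duals identity \eqref{sum_of_duals} to land in $\wt\cQ$ a.e., and then Lemma \ref{lem:Slod} to upgrade to the viscosity statement, with the comparison assertion following from Theorem \ref{thm:ZMP}. One small quibble: the direction of Remark \ref{coherence} you actually use is the \emph{forward} implication (viscosity $\Rightarrow$ classical jet in $\Theta(x_0)$), not the reverse one you single out in your parenthetical.
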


\begin{proof}
	The functions $u,\wt{u}$ and $u + \wt{u}$ are all semi-convex and hence twice differentiable almost everywhere in $X$ by Alexandroff's theorem. Since $u \in \TSH(X)$ and $\wt{u} \in \TSHD(X)$, the coherence property of Remark \ref{coherence} yields
	$$
	\mbox{$J_{x_0}u \in \Theta(x_0)$ and $J_{x_0}\wt{u} \in \widetilde{\Theta}(x_0)$ for almost every $x_0 \in X$.}
	$$
	Property \eqref{sum_of_duals} of Proposition \ref{Elem_Props} then gives \footnote{ The formula $\Theta(x_0) + \wt{\Theta}(x_0) \subset \wt{\cQ}$ for each $x_0$ is known as the {\em jet addition theorem} which follows from Harvey-Lawson duality and the invariance property $\Theta(x_0) + \cQ \subset \Theta(x_0)$ (see Section 6 of \cite{CHLP19}).}
	\begin{equation}\label{AAT2}
	\mbox{$J_{x_0}(u+\wt{u}) \in \widetilde{\Q}$ for almost every $x_0 \in X$.}
	\end{equation}
	The desired conclusion \eqref{AAT1} is reached by applying the following lemma to $w:=u +\wt{u}$, which is a version of Jensen's lemma on the passage of almost everywhere to everywhere information \footnote{ See \cite{HL13} for a discussion on the equivalence of the Slodkowski and Jensen lemmas.}.
\end{proof}

\begin{lem}\label{lem:Slod} Let $w \in \USC(X)$ be $2\lambda$-semi-convex. Then $w \in \QSHD(X)$ provided that
\begin{equation}\label{Slod1}
\mbox{$J_{x}w \in \widetilde{\Q}$ for almost every $x \in X$.}
\end{equation}
\end{lem}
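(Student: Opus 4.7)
\medskip

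\noindent\textbf{Proof proposal for Lemma \ref{lem:Slod}.} The plan is to argue by contradiction using the pointwise characterization of $\QSHD(x_0)$ from Lemma \ref{lem:QDSH_char} together with a Jensen-type maximum principle for semi-convex functions. Suppose $w \notin \QSHD(x_0)$ for some $x_0 \in X$. Then by Lemma \ref{lem:QDSH_char} we have $w(x_0) > 0$ and $w \notin \SA(x_0)$, so by \eqref{PSAC} there exist $\veps, \rho > 0$ and an affine function $a$ with $a(x_0) = w(x_0) > 0$ such that
\begin{equation*}
(w - a)(x_0) = 0 \quad \text{and} \quad (w-a)(x) \leq -\veps|x-x_0|^2 \ \text{on} \ B_\rho(x_0).
\end{equation*}
Set $\psi(x) := w(x) - a(x) + \tfrac{\veps}{2}|x-x_0|^2$. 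Then $\psi \in \USC(B_\rho(x_0))$ is $(2\lambda+\veps)$-semi-convex, $\psi(x_0) = 0$, and $\psi(x) \leq -\tfrac{\veps}{2}|x-x_0|^2$ for $x \in B_\rho(x_0)$, so $x_0$ is a strict local maximum of $\psi$.

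The key tool is the classical Jensen lemma (see, for example, the Crandall--Ishii--Lions User's Guide \cite{CIL92}): since $\psi$ is semi-convex with a strict local maximum at $x_0$, for all sufficiently small $r, \eta > 0$ the set
\begin{equation*}
K_{r,\eta} := \{\, y \in \overline{B_r(x_0)} : \exists\, p \in \R^N,\ |p| \leq \eta,\ \psi(x) + p\cdot x \ \text{attains its max over} \ \overline{B_r(x_0)} \ \text{at} \ y\,\}
\end{equation*}
has strictly positive Lebesgue measure. At every such $y$ where $\psi$ (equivalently $w$) is twice differentiable, one has $D^2\psi(y) \leq 0$, and therefore
\begin{equation*}
D^2 w(y) \;=\; D^2 \psi(y) - \veps I \;\leq\; -\veps I, \quad \text{so} \quad \lambda_N(D^2 w(y)) \leq -\veps < 0.
\end{equation*}
Moreover, the maximum property at $y$ gives $\psi(y) \geq p\cdot(x_0 - y) \geq -\eta r$, whence
\begin{equation*}
w(y) \;=\; \psi(y) - \tfrac{\veps}{2}|y-x_0|^2 + a(y) \;\geq\; a(y) - \eta r - \tfrac{\veps}{2} r^2.
\end{equation*}

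Since $a$ is continuous and $a(x_0) > 0$, by shrinking $r$ we can ensure $a(y) \geq \tfrac{1}{2}a(x_0)$ on $\overline{B_r(x_0)}$, and by then shrinking $r$ and $\eta$ further we guarantee $\eta r + \tfrac{\veps}{2}r^2 < \tfrac{1}{4}a(x_0)$. Thus $w(y) > 0$ for every $y \in K_{r,\eta}$. By Alexandroff's theorem $w$ is twice differentiable almost everywhere in $X$, and by hypothesis \eqref{Slod1} the jet condition $J_y w \in \widetilde{\Q}$ holds almost everywhere. Intersecting these two full-measure sets with $K_{r,\eta}$ (which has positive measure), we obtain at least one point $y$ at which simultaneously $w(y) > 0$, $\lambda_N(D^2 w(y)) < 0$, and $J_y w \in \widetilde{\Q} = \{(s,A) : s \leq 0 \ \text{or} \ A \in \widetilde{\cP}\}$. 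This is a contradiction, completing the proof. The only nontrivial step is the quantitative use of the quadratic buffer $\tfrac{\veps}{2}|x-x_0|^2$, which turns the strict inequality $D^2\psi(y) \leq 0$ supplied by Jensen's lemma into the strict violation $\lambda_N(D^2 w(y)) < 0$ of the $\widetilde{\cP}$-alternative.
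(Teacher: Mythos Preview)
Your proof is correct and complete. It differs from the paper's argument in the choice of the underlying ``almost everywhere to everywhere'' tool. The paper first restricts to the open set $X^+ = \{w > 0\}$ (using that a semi-convex function is continuous), observes that the hypothesis forces $D^2 w(x) \in \widetilde{\cP}$ for a.e.\ $x \in X^+$, and then invokes Lemma~7.3 of \cite{HL09}, whose proof rests on Slodkowski's largest eigenvalue theorem, to conclude $w \in \SA(X^+)$. You instead work directly at a single putative bad point $x_0$, build the semi-convex function $\psi$ with a strict local maximum, and apply the classical Jensen lemma from \cite{CIL92} to produce a set of positive measure on which $D^2\psi \leq 0$; the quadratic buffer $\tfrac{\veps}{2}|x-x_0|^2$ then yields the strict inequality $\lambda_N(D^2 w) < 0$ on that set, contradicting the a.e.\ hypothesis once you have also arranged $w(y) > 0$ there. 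The paper itself notes (in a footnote) that the Slodkowski and Jensen lemmas are equivalent, so the two routes are morally the same; your version has the advantage of being self-contained within standard viscosity-solution machinery, while the paper's version keeps the argument within the Harvey--Lawson subaffine framework and defers the analytic core to an external reference.
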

\begin{proof} By the pointwise characterization of Lemma \ref{lem:QDSH_char}, one needs only to show that
\begin{equation}\label{Slod2}
\mbox{ $w(x_0) \leq 0$ \ or \ $w \in \SA(x_0)$ \ for each \ $x_0 \in X$.}
\end{equation}
Define $X^+ := \{ x \in X: \ w(x) > 0 \}$, which is open since $w$ is continuous, and it suffices to show that
\begin{equation}\label{Slod3}
\mbox{$w \in \SA(x_0)$ for every $x_0 \in X^+$.}
\end{equation}
Since $w$ is twice differentiable almost everywhere, the hypothesis \eqref{Slod1} yields
$$
\mbox{$w(x) \leq 0$ \ or \ $D^2w(x) \in \widetilde{\cP}$ \ for almost every \ $x \in X$.}
$$
and hence one has
\begin{equation}\label{Slod4}
\mbox{$D^2w(x) \in \widetilde{\cP}$ \ for almost every \ $x \in X^+$.}
\end{equation}
For $w \in \USC(X^+)$ and semi-convex, the condition \eqref{Slod4} gives the needed property \eqref{Slod3} by applying Lemma 7.3 of \cite{HL09} (see also Lemma 4.10 of \cite{CP17}). We note only that the main idea is to pass a lower bound on the largest eigenvalue from a set of full measure to the entire domain $X^+$, where the condition $D^2w(x) \in \widetilde{\cP}$ means precisely $\lambda_N(D^2w(x)) \geq 0$ and the tool used is Slodkowski's largest eigenvalue theorem \cite{Sl84}.
\end{proof}

\vspace{1ex}
\noindent {\bf Step 2:} {\em For upper semi-continuous $u, \wt{u}$ and   $\Theta$ uniformly continuous on $X$, reduce to the special case of  $u, \wt{u}$ bounded from below.}

\begin{lem}\label{lem:AAT_reduction_BB} Let $\Theta$ be a uniformly continuous proper elliptic map on $X$. If the subharmonic addition theorem
		 \begin{equation}\label{AAT3}
	u \in \TSH(X) \ \text{and} \ \wt{u} \in \TSHD(X) \ \ \Rightarrow \ \ u + \wt{u} \in \QSHD(X).
	\end{equation}
holds for each pair $u, \wt{u} \in \USC(\overline{X})$ which are bounded from below, then \eqref{AAT3} holds for each pair $u, \wt{u} \in \USC(\overline{X})$.
\end{lem}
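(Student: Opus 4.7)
The plan is to reduce to the bounded-below case by truncating $u$ and $\wt{u}$ from below using the bounded $\Theta$- and $\wt\Theta$-harmonic functions furnished by Proposition \ref{prop:TSH_properties}(5), and then to recover the original pair by a monotone limit argument using Lemma \ref{lem:DLP}. The uniform continuity of $\Theta$ (and hence of $\wt\Theta$, by Proposition \ref{prop:dual_extension}(a)) is what makes all five properties of Proposition \ref{prop:TSH_properties} available on $X$, and this is exactly what is needed.

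First, I would apply Proposition \ref{prop:TSH_properties}(5) to both $\Theta$ and $\wt\Theta$ on $X$ to obtain smooth bounded functions
\[
	\varphi(x) = -\tau + \tfrac{\tau}{2}|x|^2, \qquad \wt\varphi(x) = -\tilde\tau + \tfrac{\tilde\tau}{2}|x|^2
\]
that are $\Theta$-harmonic and $\wt\Theta$-harmonic on $X$, respectively. Since $X \subset\subset \R^N$, the functions $\varphi$ and $\wt\varphi$ are bounded on $\overline{X}$. For $n \in \N$, I would then define
\[
	u_n := \max\{u,\, \varphi - n\}, \qquad \wt u_n := \max\{\wt u,\, \wt\varphi - n\}.
\]
By the Sliding Property, $\varphi - n \in \TSH(X)$ and $\wt\varphi - n \in \TSHD(X)$; by the Maximum Property, $u_n \in \TSH(X)$ and $\wt u_n \in \TSHD(X)$. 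Since max of two USC functions is USC, $u_n, \wt u_n \in \USC(\overline{X})$, and each is bounded below on $\overline{X}$ by $\min_{\overline X}\varphi - n$ or $\min_{\overline X}\wt\varphi - n$ respectively.

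The hypothesis of the lemma then applies to $(u_n, \wt u_n)$ to give $u_n + \wt u_n \in \QSHD(X)$ for every $n$. Next I would observe that $\{u_n + \wt u_n\}_{n \in \N}$ is a decreasing sequence: as $n$ increases, $\varphi - n$ and $\wt\varphi - n$ decrease pointwise, so the pointwise maxima $u_n$ and $\wt u_n$ decrease. Moreover the sequence converges pointwise to $u + \wt u$ on $X$: at any $x$ with $u(x) > -\infty$, for all sufficiently large $n$ one has $\varphi(x) - n < u(x)$, so $u_n(x) = u(x)$ eventually, and at any $x$ where $u(x) = -\infty$, $u_n(x) = \varphi(x) - n \to -\infty = u(x)$; the same argument applies to $\wt u_n$.

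The conclusion $u + \wt u \in \QSHD(X)$ then follows immediately from the decreasing limit property of Lemma \ref{lem:DLP} applied to the constant proper elliptic map $\wt\cQ$ on $X$. I do not expect any serious obstacle here: the only point to be careful about is that truncation from below is done inside $\TSH(X)$ and $\TSHD(X)$ respectively (which is why one cannot simply replace $u$ by $\max\{u,-n\}$, as constant functions need not be $\Theta$-subharmonic), and this is exactly what the bounded harmonic functions from Proposition \ref{prop:TSH_properties}(5) are designed to allow.
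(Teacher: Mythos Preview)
Your proposal is correct and follows essentially the same approach as the paper: truncate from below using the bounded harmonics from Proposition \ref{prop:TSH_properties}(5), invoke the Sliding and Maximum Properties to stay in $\TSH(X)$ and $\TSHD(X)$, and pass to the limit via Lemma \ref{lem:DLP}. The only cosmetic difference is that the paper uses a single $\Theta$-harmonic $\varphi$ and exploits that $-\varphi \in \TSHD(X)$ (since $\varphi$ being $\Theta$-superharmonic means $-\varphi$ is $\wt\Theta$-subharmonic), whereas you produce a separate $\wt\Theta$-harmonic $\wt\varphi$; both are fine.
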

\begin{proof}
	If either $u$ or $\wt{u}$ is not bounded from below on $\overline{X}$, consider the sequences in $\USC(\overline{X})$
	\begin{equation}\label{CPr1}
	\mbox{$u_m:= \max \{u, \varphi - m\}$ \quad and  \quad $\wt{u}_m:= \max \{ \wt{u}, - \varphi - m\}$ \quad for each $m \in \N$,}
	\end{equation}
	where $\varphi$ is the bounded $\Theta$-harmonic function constructed in Proposition \ref{prop:TSH_properties} (5). These sequences will be bounded from below since $\varphi$ and $-\varphi$ are. By parts (1) and (2) of Proposition \ref{prop:TSH_properties}, one has $u_m \in \TSH(X)$ and $\wt{u}_m \in \TSHD(X)$ for each $m \in \N$. Assuming the \eqref{AAT3} holds for pairs which are bounded below, one has
	\begin{equation}\label{CPr2}
	\mbox{$ w_m := u_m + \wt{u}_m \in \QSHD(X)$ \ for each $m \in \N$, }
	\end{equation}
	but $w_m \searrow w:= u + \wt{u}$ as $m \to + \infty$ and hence $u + \wt{u} \in \QSHD(X)$ by the Decreasing Limit Property of Lemma \ref{lem:DLP} for $\QSHD(X)$.
\end{proof}

\vspace{1ex}
\noindent {\bf Step 3:} {\em Prove \eqref{AAT} for $u, \wt{u} \in \USC(\overline{X})$ which are bounded below with $\Theta$ uniformly continuous on $X$.}
\vspace{1ex}

The idea of the proof is to use the sup convolution and suitable quadratic perturbations to build regularizing sequences for $u$ and $\wt{u}$ which are semi-convex and locally $\Theta$ and $\wt{\Theta}$-subharmonic respectively. The subharmonic addition theorem holds along the approximating sequences which tend to $u + \wt{u} \in \QSHD(X)$ by the Decreasing Limit Property of Lemma \ref{lem:DLP}.

We begin by recalling that if $u \in \USC(X)$ and bounded on $X$, for each $\veps > 0$, one defines the {\em sup-convolution} $u^{\veps}$ by 
\begin{equation}\label{defn:sup_conv}
u^\veps(x) = \sup_{z \in \R^N} \left\{ u(x-z) - \frac{1}{\veps}|z|^2 \right\} \quad \forall x \in X,
\end{equation}
where one extends $u$ to be $-\infty$ outside of $X$. The function defined in \eqref{defn:sup_conv} satisfies the following well-known properties (cf.\ Theorem 8.2 of \cite{HL09}, for example):
$$
\mbox{$u^\veps$ decreases to $u$ as $\veps \rightarrow 0$}
$$
and
$$
\mbox{$u^\veps$ is $\frac{2}{\veps}$-semi-convex.}
$$
For $u \in \TSH(X)$ bounded with $|u| \leq M$ on $X$, consider the family of quadratic perturbations $u^\veps( \cdot) + \eta (| \cdot |^2 - \omega)$ with $\eta > 0$ small and $\omega := 2 + 
\sup_{x \in X}|x|^2$ is the parameter introduced in \eqref{UTF1}-\eqref{UTF2}.

\begin{lem}\label{lem:SCA} For every $\eta > 0$ there exists $\overline{\veps} = \overline{\veps}(\eta) > 0$ such that
	\begin{equation}\label{sup_conv3}
	u^\veps( \cdot) + \eta ( | \cdot |^2 - \omega) \in \TSH(X_{\delta}), \ \ \forall \ \veps \in (0, \overline{\veps}(\eta)],
	\end{equation}
	where
$$\mbox{$\delta := \sqrt{2 \veps M}$ and $X_{\delta} := \{ x \in X: \ {\rm dist}(x, \partial X) > \delta \}$}$$
\end{lem}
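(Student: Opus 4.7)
The plan is to exploit the fact that the sup-convolution $u^\veps$ can be written as a supremum of (admissible) translates of $u$ by vectors of controlled size, and then apply the Uniform Translation Property (Proposition \ref{prop:TSH_properties} (4)) translate-by-translate, finally gluing the family with Proposition \ref{prop:TSH_properties} (3).

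First, I would verify the standard a priori bound on maximizers. Since $|u| \leq M$ on $\overline{X}$ and $u \equiv -\infty$ outside, the choice $z = 0$ in \eqref{defn:sup_conv} gives a value $\geq -M$, while any $z$ with $|z| > \delta = \sqrt{2\veps M}$ yields $u(x-z) - \frac{1}{\veps}|z|^2 < M - 2M = -M$. Hence for each $x \in X_{\delta}$,
\begin{equation}\label{sc_rewrite}
u^\veps(x) = \sup_{|y| \leq \delta} \left\{ u(x + y) - \frac{1}{\veps}|y|^2 \right\},
\end{equation}
where the supremum ranges effectively over $y$ with $x + y \in X$, a condition automatically satisfied when $x \in X_{\delta}$ and $|y| \leq \delta$.

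Second, I would calibrate the two scales. Let $\delta(2\eta) > 0$ be furnished by the uniform continuity of $\Theta$ on $X$ through formulation \eqref{uusc2} applied with parameter $2\eta$. Define
$$
\overline{\veps}(\eta) := \frac{\delta(2\eta)^2}{2M},
$$
so that for $\veps \in (0, \overline{\veps}(\eta)]$ one has $\delta = \sqrt{2\veps M} \leq \delta(2\eta)$. The argument in the proof of Proposition \ref{prop:TSH_properties} (4), applied with $2\eta$ in place of $\eta$ and using any translation radius no larger than $\delta(2\eta)$, then shows that for every $y \in B_{\delta}(0)$,
$$
g_y(\cdot) := u(\cdot + y) + \eta \bigl( |\cdot|^2 - \omega \bigr) \in \TSH(X_{\delta}).
$$
Since $\frac{1}{\veps}|y|^2 \geq 0$, the Sliding Property (Proposition \ref{prop:TSH_properties} (2)) yields $g_y - \frac{1}{\veps}|y|^2 \in \TSH(X_{\delta})$ as well.

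Third, by \eqref{sc_rewrite} we can write
$$
u^\veps(\cdot) + \eta \bigl( |\cdot|^2 - \omega \bigr) = \sup_{|y| \leq \delta} \left( g_y(\cdot) - \frac{1}{\veps}|y|^2 \right) \quad \text{on} \ X_{\delta}.
$$
The family $\{ g_y - \frac{1}{\veps}|y|^2 \}_{|y| \leq \delta}$ is locally bounded above (since $u \leq M$ and $X$ is bounded). Thus the Families Locally Bounded Above Property (Proposition \ref{prop:TSH_properties} (3)) gives that the USC envelope of this supremum lies in $\TSH(X_{\delta})$. Because $u^\veps$ is $\tfrac{2}{\veps}$-semi-convex, and hence continuous, so is $u^\veps + \eta(|\cdot|^2 - \omega)$, which therefore coincides with its own USC envelope, giving \eqref{sup_conv3}.

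The only genuine difficulty is the matching of scales: the translation amount $|y| \leq \sqrt{2\veps M}$ dictated by the sup-convolution must not exceed the translation tolerance $\delta(2\eta)$ supplied by the uniform continuity of $\Theta$ at the quadratic-perturbation level $\eta$ (recall that Proposition \ref{prop:TSH_properties} (4) produces the coefficient $\eta/2$, forcing us to apply it with $2\eta$). This quantitative trade-off is exactly what determines $\overline{\veps}(\eta)$.
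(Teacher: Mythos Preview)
Your proposal is correct and follows essentially the same route as the paper: invoke the Uniform Translation Property (with parameter $2\eta$ to obtain coefficient $\eta$), use the Sliding Property to absorb the nonpositive constants $-\tfrac{1}{\veps}|y|^2$, apply the Families Locally Bounded Above Property to the resulting collection, and identify the supremum with $u^\veps+\eta(|\cdot|^2-\omega)$ via the localization $|y|\le\sqrt{2\veps M}$ and the semi-convexity (hence continuity) of $u^\veps$. Your handling of the domain---invoking the \emph{proof} of Proposition \ref{prop:TSH_properties} (4) so that translates by $|y|\le\delta=\sqrt{2\veps M}\le\delta(2\eta)$ land in $\TSH(X_{\delta})$ rather than merely $\TSH(X_{\delta(2\eta)})$---is slightly more precise than the paper's write-up, which uses the single scale $\delta(2\eta)$ throughout.
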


\begin{proof}
Indeed, the Uniform Translation Property (4) of Proposition \ref{prop:TSH_properties} says that for each $\eta > 0$ there exists $\delta = \delta(2 \eta) > 0$ such that
\begin{equation}\label{sc1}
u_{z, \eta}(\cdot) := u(\cdot - z) + \eta ( | \cdot |^2 - \omega ) \in \TSH(X_{\delta}), \ \ \forall \ z \in B_{\delta}(0).
\end{equation}
Moreover, as noted in Proposition \ref{prop:dual_extension}, the $\eta,\delta$ relation is the same for the dual map $\widetilde{\Theta}$ and hence there is an analogous family $\{ \wt{u}_{z, \eta} \}_{z \in B_{\delta}(0)}$ associated to $\wt{u}$ which will be $\wt{\Theta}$-subharmonic.

Now, for $\veps > 0$, consider the collection
\begin{equation}\label{CPb3}
\mathfrak{F} := \left\{  u( \cdot - z) - \frac{1}{\veps}|z|^2 + \eta ( | \cdot |^2 -  X ), \ \ |z| < \delta \right\}.
\end{equation}
Since $- \frac{1}{\veps}|z|^2 \leq 0$ for each $z$,  Proposition \ref{prop:TSH_properties} (2) gives $\mathfrak{F} \subset \TSH(X_{\delta})$ and the collection is locally uniformly bounded from above. By Proposition \ref{prop:TSH_properties} (3), the Perron function defined for $x \in X_{\delta}$ by 
\begin{align}\label{perron_function}
u_{\eta}^{\veps}(x)  & := \sup_{|z| < \delta} \left\{  u( x - z) - \frac{1}{\veps}|z|^2 + \eta (| x |^2 - \omega) \right\}  \nonumber\\
	& = \sup_{|z| < \delta} \left\{  u( x - z) - \frac{1}{\veps}|z|^2 \right\} + \eta (| x |^2 - \omega) 
\end{align}
will admit an upper semicontinuous regularization $[u_{\eta}^{\veps}]^*$ which belongs to $\TSH(X_{\delta})$. 

It is not hard to see that for small $\veps$ one has that $u^{\veps}_{\eta}$ is semi-convex and hence continuous so that $u^{\veps}_{\eta} = [u_{\eta}^{\veps}]^* \in \TSH(X_\delta)$ and the claim \eqref{sup_conv3} follows. Indeed, by choosing  $\veps \in (0, \delta^2(\eta)/2M)$ the values of $z$ with $|z| \geq \delta$ do not compete in the sup which defines the sup-convolution $u^{\veps}$ in \eqref{defn:sup_conv} and hence the first term in \eqref{perron_function} is $u^{\veps}(x)$ and one has the following identity on $X_{\delta(\eta)}$:
\begin{equation}\label{CPb6}
u^{\veps}_{\eta}(x) = u^{\veps}(\cdot) + \eta( |\cdot|^2 - \omega)  \ \ \text{for} \ \veps \in (0, \bar{\veps}] \ \ \text{with} \ \bar{\veps} = \frac{\delta^2(\eta)}{2M},
\end{equation}
which gives the semi-convexity and continuity of $u^{\veps}_{\eta}$. 
\end{proof}

Armed with Lemma \ref{lem:SCA}, we complete Step 3 of the proof of Theorem \ref{thm:AAT} by fixing a sequence $\{ \eta_j \}_{j \in \N}$ with $\eta_j \to 0$ as $j \to + \infty$ and select $\veps_j := \min \{\eta_j, \bar{\veps}(\eta_j) \}$ so that $\delta_j:= \delta(\veps_j) = \sqrt{2 \veps_j M} \to 0^+$ and $\Omega_{\delta_j} \nearrow \Omega$. The corresponding approximating sequences $\{u^{\veps_j}_{\eta_j} \}$ and $ \{\wt{u}^{\veps}_{\eta_j} \}$ defined by \eqref{CPb6} are $2/\veps_j$-semi-convex and $\Theta, \widetilde{\Theta}$-subharmonic in $\Omega_{\delta_j}$. By Lemma \ref{lem:AAT_SC} one has $w_j:= u^{\veps_j}_{\eta_j} + \wt{u}^{\veps}_{\eta_j} \in \QSHD(\Omega_j)$. By construction $w_j \searrow u + \wt{u}$ and $\Omega_{\delta_j} \nearrow \Omega$ and hence $u + \wt{u} \in \QSHD(\Omega)$ by applying the Decreasing Limit Property of Lemma \ref{lem:DLP}. 
\end{proof}

\subsection{Comparison with local continuity in $r$}

We conclude the section by observing that in order to prove comparison, our continuity demands on $\Theta$ can be slightly reformulated in ways that might be useful for applications. Recall that by Proposition \ref{UCHD} (c), uniform continuity of $\Theta$ on $X \subset \subset \Omega$ is equivalent to the following property: for all $\eta > 0$, $\exists \, \delta = \delta(\eta, X)$ such that
\begin{equation}\label{cont}
	\text{$x,y \in X$, $|x-y| < \delta$ $\Rightarrow$ $\Theta(x) + (- \eta, \eta I) \subset \Theta(y)$.}
\end{equation}

Assume now that $\Theta$ satisfies for each $X \subset \subset \Omega$
\begin{equation}\label{cont_cut}
	\begin{array}{c}
		\text{for all $R > 0$ large and $\eta > 0$, $\exists \, \delta = \delta(R, \eta, X)$ such that} \\
		\text{$x,y \in X$, $|x-y| < \delta$ $\Rightarrow$ $\Theta(x) \cap \left( [-R,R] \times \cS(N) \right) + (0, \eta I) \subset \Theta(y)$.}
	\end{array}
\end{equation}

\begin{thm}\label{comparison_cut} Let $\Theta$ be a proper elliptic map on $\Omega$ that satisfies \eqref{cont_cut} for each $X \subset \subset \Omega$. Then the comparison principle holds; that is,
if $u \in \USC(\overline{\Omega})$ and $v \in \LSC(\overline{\Omega})$ are $\Theta$-subharmonic and $\Theta$-superharmonic respectively in $\Omega$, then
\[
\mbox{$u \leq v$ on $\partial \Omega \ \ \Rightarrow \ \ u \leq v$ in $\Omega$.}
\]
\end{thm}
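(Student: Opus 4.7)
The plan is to show that the hypotheses of Theorem \ref{thm:CP} can be weakened from \eqref{cont} to \eqref{cont_cut} by retracing the proof of the Subharmonic Addition Theorem (Theorem \ref{thm:AAT}) and verifying that, at every point where uniform continuity of $\Theta$ or $\wt\Theta$ is invoked, the 2-jets in question have $r$-coordinate confined to an a priori bounded interval. The conceptual point is that the $-\eta$ shift in the $r$-direction present in \eqref{cont} is never truly essential: wherever it enters the arguments, it is already supplied by a non-positive $r$-perturbation --- most prominently the quadratic term $\frac{\eta}{2}(|x|^2-\omega)$ in the proof of the Uniform Translation Property --- and then absorbed by $\cQ$-monotonicity. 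What is genuinely needed is the $+\eta I$ shift in the $A$-direction, which is exactly what \eqref{cont_cut} provides.

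Before entering Theorem \ref{thm:AAT}, I would first verify two preparatory facts. First, the dual map $\wt\Theta$ satisfies a condition of the same form as \eqref{cont_cut}: if $(s,B)\in\wt\Theta(y)\cap([-R,R]\times\cS(N))$ but $(s,B+\eta I)\notin\wt\Theta(x)$, then $(-s,-B-\eta I)\in\Theta(x)^{\circ}$; a small closed ball around this point inside $\Theta(x)$, translated by $(0,\eta I)$ through \eqref{cont_cut} with $R+1$ in place of $R$, yields an open neighbourhood of $(-s,-B)$ inside $\Theta(y)$, contradicting $(s,B)\in\wt\Theta(y)$. Second, bounded $\Theta$- and $\wt\Theta$-harmonic functions $\varphi(x)=-\tau+\tfrac{\tau}{2}|x|^2$ exist as in Proposition \ref{prop:TSH_properties}(5): for each $y\in\overline\Omega$, pick $t_y$ with $(-t_y,t_yI)\in\Theta(y)$ via $\cQ$-monotonicity, apply \eqref{cont_cut} with $R=t_y$ and $\eta=1$ to get $(-(t_y+1),(t_y+1)I)\in\Theta(x)$ on $|x-y|<\delta$, and extract a uniform $\tau$ by compactness of $\overline\Omega$. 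These allow the reduction of Theorem \ref{thm:AAT} to bounded $u,\tilde u$ via Lemma \ref{lem:AAT_reduction_BB} to go through verbatim.

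The main step is then to re-prove the Uniform Translation Property (Proposition \ref{prop:TSH_properties}(4)) under \eqref{cont_cut}, restricted to $|u|\le M$ on $\overline\Omega$. Any upper test function $v$ for $u(\cdot+y)+\tfrac{\eta}{2}(|\cdot|^2-\omega)$ at $x_0$ has $v(x_0)$ uniformly bounded by some $M'=M'(M,\eta,\omega)$; the dual form of \eqref{cont_cut} with $R=M'$ then gives $J_{x_0}v+(0,\eta I)\in\wt\Theta(x_0+y)$ for small $|y|$, and the residual shift $(\tfrac{\eta}{2}(|x_0|^2-\omega),0)$ has non-positive $r$-component by the choice of $\omega$, so $\cQ$-monotonicity yields $J_{x_0+y}\hat v_{y;\eta}\in\wt\Theta(x_0+y)$. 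The remaining parts of the proof of Theorem \ref{thm:AAT} --- the sup-convolution construction in Lemma \ref{lem:SCA}, the semi-convex case in Lemma \ref{lem:AAT_SC} via Slodkowski, the Decreasing Limit Property, and the application of the zero maximum principle Theorem \ref{thm:ZMP} --- use no additional continuity and carry over unchanged. The main obstacle is the careful bookkeeping of $r$-coordinate bounds at each step; the observation that this bookkeeping always closes --- because after the reduction one works with bounded subharmonic/superharmonic data --- is what makes \eqref{cont_cut} sufficient.
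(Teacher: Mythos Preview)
Your overall strategy is sound and does lead to a proof, but it takes a genuinely different route from the paper's. The paper does not retrace the proof of Theorem \ref{thm:AAT} under the weaker hypothesis \eqref{cont_cut}. Instead it introduces an auxiliary truncated map $\Theta_M(x) := \{(r,A) : (\psi_M(r), A) \in \Theta(x)\}$, where $\psi_M(r) = r$ for $|r| \le M$ and $\psi_M(r) = M$ otherwise, and proves in two short lemmas that (i) $\Theta_M$ is a genuinely Hausdorff-continuous proper elliptic map (because all relevant $r$-coordinates are capped at $M$, so \eqref{cont_cut} with $R = M$ delivers \eqref{uusc2} directly), and (ii) any $\Theta$-subharmonic bounded by $R$ is automatically $\Theta_R$-subharmonic (since $\psi_R$ is the identity on $[-R,R]$). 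After the reduction to bounded $u,\tilde u$, one then invokes Theorem \ref{thm:AAT} for $\Theta_R$ as a black box. This is more modular: no step of the subharmonic-addition machinery needs to be reopened.

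Your approach, by contrast, reopens that machinery and tracks $r$-bounds through each ingredient. This is legitimate and arguably more transparent as to \emph{why} \eqref{cont_cut} suffices, and your two preparatory facts (the dual map inherits a condition of the form \eqref{cont_cut}; bounded $\Theta$-harmonics exist under \eqref{cont_cut} alone) are correct and needed --- indeed the paper's proof relies on the second without comment. There is, however, a muddle in your main step. In re-proving the Uniform Translation Property you speak of ``any upper test function $v$ for $u_{y;\eta}$ at $x_0$'' and then apply the \emph{dual} form of \eqref{cont_cut} to conclude $J_{x_0}v + (0,\eta I) \in \wt\Theta(x_0+y)$. But an upper test function has its jet in $\Theta$, not $\wt\Theta$; and if instead $v$ is the $C^2$ function from the characterization of Theorem \ref{thm:TSH_char} with $J_{x_0}v \in \wt\Theta(x_0)$, then $v(x_0)$ has only a \emph{lower} bound (from $(u_{y;\eta}+v)(x_0)>0$), not an upper one, so you cannot restrict to $|v(x_0)|\le M'$. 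The clean fix is to work with the direct definition: if $\varphi$ is an upper test function for $u_{y;\eta}$ at $x_0$, then $\psi(z) := \varphi(z-y) - \tfrac{\eta}{2}(|z-y|^2-\omega)$ is an upper test function for $u$ at $x_0+y$, so $(u(x_0+y),\, D^2\varphi(x_0) - \eta I) \in \Theta(x_0+y)$ with $r$-coordinate $u(x_0+y) \in [-M,M]$; now \eqref{cont_cut} for $\Theta$ itself (not its dual) with $R=M$ gives $(u(x_0+y), D^2\varphi(x_0)) \in \Theta(x_0)$, and since $\varphi(x_0) = u(x_0+y) + \tfrac{\eta}{2}(|x_0|^2-\omega) \le u(x_0+y)$, $\cQ$-monotonicity yields $J_{x_0}\varphi \in \Theta(x_0)$.
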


This comparison principle might be useful in order to obtain comparison principles fro certain PDEs; for example,  see Remark \ref{cut_example}. Its proof is based on two observations. First, under the assumption \eqref{cont_cut},  one can define for all $M > 0$ a uniformly continuous map $\Theta_M$ on $X \subset \subset \Omega$ that agrees with $\Theta$ for values in $[-M,M] \times \cS(N)$ in the codomain. Second, $\Theta$-subharmonic functions that are bounded in the sup-norm by $M$ are $\Theta_{M}$-subharmonic. One then concludes by a standard truncation argument and the comparison principle for continuous elliptic maps on $\Omega$, which are locally uniformly continuous.

Let
\[
\Theta_M(x) := \{(r,A) : (\psi_M(r), A) \in \Theta(x)\} \quad \forall x \in X, M > 0,
\]
where
\[
\psi_M(r) :=
\begin{cases}
r & \text{if $|r| \le M$}, \\
M & \text{if $|r| > M$}.
\end{cases}
\]
By the fact that $\psi_M$ is continuous and odd, it is straightforward to check that
\[
\widetilde{\Theta_M}(x) = \{(r,A) : (\psi_M(r), A) \in \widetilde \Theta(x)\} \quad \forall x \in X, M > 0.
\]

\begin{lem}\label{thetaMhaus} For all $M > 0$ large, $\Theta_M$ defined above is a proper and uniformly continuous elliptic map on each $X \subset \subset \Omega$.
\end{lem}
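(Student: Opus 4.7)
The plan is to verify directly each of the four conditions in Definition \ref{proper_ell_map} for $\Theta_M(x)$ together with the uniform continuity characterization \eqref{uusc2}. Closedness, $\cQ$-monotonicity, and uniform continuity follow essentially for free from the continuity and non-decreasing nature of $\psi_M$ combined with the corresponding properties of $\Theta$. The clause ``$M$ large'' enters only in securing non-emptiness and properness of $\Theta_M(x)$ uniformly in $x \in \overline{X}$, and this is where the continuity hypothesis \eqref{cont_cut} and the compactness of $\overline{X}$ play their role.

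Closedness is immediate: $\Theta_M(x)$ is the preimage of the closed set $\Theta(x)$ under the continuous map $(r,A) \mapsto (\psi_M(r), A)$. For $\cQ$-monotonicity, suppose $(r,A) \in \Theta_M(x)$ and $(s,P) \in \cN \times \cP$; then $\psi_M(r+s) \leq \psi_M(r)$ since $\psi_M$ is non-decreasing, so
\[
(\psi_M(r+s), A+P) = (\psi_M(r), A) + (\psi_M(r+s) - \psi_M(r), P) \in \Theta(x) + \cQ \subset \Theta(x),
\]
which yields $(r+s, A+P) \in \Theta_M(x)$. For non-emptiness and properness uniformly over $\overline{X}$, introduce
\[
\alpha(x) := \sup\{r : \exists\, A \text{ with } (r,A) \in \Theta(x)\}, \quad \beta(x) := \inf\{r : \exists\, A \text{ with } (r,A) \notin \Theta(x)\}.
\]
Pointwise $\alpha(x) > -\infty$ and $\beta(x) < +\infty$ by non-emptiness and properness of $\Theta(x)$. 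A short argument using \eqref{cont_cut} and its contrapositive shows $\alpha$ is lower semicontinuous and $\beta$ is upper semicontinuous on $\overline{X}$, so compactness forces both $-\inf_{\overline{X}} \alpha$ and $\sup_{\overline{X}} \beta$ to be finite. Choosing any $M$ strictly exceeding both quantities: the inequality $\alpha(x) > -M$ supplies some $(r_0, A_0) \in \Theta(x)$ with $r_0 > -M$, whence $(-M, A_0) \in \Theta_M(x)$ by $\cQ$-monotonicity; dually, $\beta(x) < M$ supplies $(r^*, A^*) \notin \Theta(x)$ with $r^* \leq M$, whence by $\cQ$-monotonicity in $r$ one has $(M, A^*) \notin \Theta(x)$, so any pair $(r_1, A^*)$ with $r_1 \geq M$ lies outside $\Theta_M(x)$.

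Uniform continuity, the key payoff of the truncation, is then quick. Given $\eta > 0$, apply \eqref{cont_cut} with $R := M$ to obtain $\delta = \delta(M, \eta, X)$; for $x,y \in X$ with $|x-y| < \delta$ and $(r,A) \in \Theta_M(x)$, the pair $(\psi_M(r), A)$ lies in $\Theta(x) \cap ([-M,M] \times \cS(N))$, so $(\psi_M(r), A + \eta I) \in \Theta(y)$. Combining $\psi_M(r-\eta) \leq \psi_M(r)$ with the $\cQ$-monotonicity of $\Theta(y)$ gives $(\psi_M(r-\eta), A+\eta I) \in \Theta(y)$, i.e.\ $(r-\eta, A+\eta I) \in \Theta_M(y)$; this is precisely characterization \eqref{uusc2}. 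The main obstacle throughout is the uniform choice of $M$, which rests on the semi-continuity of $\alpha$ and $\beta$; once $M$ is fixed, the remaining checks are brief direct computations.
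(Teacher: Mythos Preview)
Your proof is correct. The verification of closedness, $\cQ$-monotonicity, and uniform continuity via \eqref{uusc2} is essentially identical to the paper's argument (you supply a bit more detail, but the computations are the same).

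Where you differ is in the non-emptiness and properness step. The paper invokes Proposition~\ref{prop:TSH_properties}(5) to produce a single $\tau>0$ with $(-\tau,\tau I)\in\Theta(x)$ for all $x\in X$, and then argues dually for a point outside every $\Theta(x)$. You instead introduce the extremal functions $\alpha(x)$ and $\beta(x)$, prove their semicontinuity directly from \eqref{cont_cut}, and use compactness of $\overline X$ to extract a uniform threshold $M$. Your route is slightly more self-contained: Proposition~\ref{prop:TSH_properties}(5) is stated under the hypothesis that $\Theta$ is uniformly continuous, whereas in the present setting one only has the weaker localized condition \eqref{cont_cut}, so the paper's citation tacitly requires re-checking that the covering argument in that proposition still goes through (it does, after choosing $R$ large enough, but this is not written). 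Your semicontinuity argument avoids that detour entirely. On the other hand, the paper's approach has the advantage of producing an explicit element $(-\tau,\tau I)$ of the diagonal form used elsewhere in the theory, which is occasionally convenient.

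One small technical point: \eqref{cont_cut} is stated for $x,y\in X$ with $X$ open, so to get the semicontinuity of $\alpha,\beta$ on the compact set $\overline X$ you should work inside a slightly larger $X'$ with $\overline X\subset X'\subset\subset\Omega$; this is routine but worth a phrase.
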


\begin{proof} First, we observe that $\Theta_M(x)$ is non-empty. Indeed, by Proposition \ref{prop:TSH_properties} (5), there exists $\tau>0$ such that $(-\tau, \tau I) \in \Theta(x)$ for all $x \in X$. Hence, for all $M \ge \tau$, $\psi_M(-\tau) = -\tau$, so $(-\tau, \tau I) \in \Theta_M(x)$ for all $x$. To prove that $\Theta_M(x) \neq \R \times \cS(N)$, one argues similarly via a couple $(r, A) \notin \Theta(x)$ for all $x$. Proper ellipticity easily follows from the monotonicity of $\psi_M$ and the degenerate ellipticity of $\Theta(x)$.
	
	Continuity of $\Theta$ can be obtained by the alternative characterization stated in Proposition \ref{UCHD}. Fix any $\eta > 0$, and from \eqref{cont_cut} let $\delta = \delta(R, \eta)$ be such that
	\[\text{$x,y \in \Omega$, $|x-y| < \delta$ $\Rightarrow$ $\Theta(x) \cap \left( [-R,R] \times \cS(N) \right) + (0, \eta I) \subset \Theta(y)$.}\]
	Let $(r, A) \in \Theta_M(x)$, so $(\psi_M(r), A) \in \Theta(x)$. Note that $|\psi_M(r)| \le M$, so $(\psi_M(r), A) \in  \Theta(x) \cap \left( [-R,R] \times \cS(N) \right) $. Hence, for $|x-y| < \delta$,
	\[
	(\psi_M(r), A) + (0, \eta I)\in \Theta(y),
	\]
	that in turn gives $(r, A + \eta I) \in \Theta_M(y)$. By ellipticity of $\Theta_M(y)$, one has $(r - \eta, A + \eta I) \in \Theta_M(y)$, that finally yields $\Theta_M(x) + (-\eta, \eta I) \subset \Theta_M(y)$ for all $|x-y| < \delta$.
\end{proof}

\begin{lem}\label{subh_preservation} Let $u \in \USC(\overline{X})$ be such that $|u| \le R$ for some $R > 0$. If $u$ is $\Theta$-subharmonic ($\widetilde{\Theta}$-subharmonic), then $u$ is $\Theta_{R}$-subharmonic ($\widetilde{\Theta_{R}}$-subharmonic).
\end{lem}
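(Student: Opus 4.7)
The plan is to show both implications by a direct check on test jets, exploiting the fact that for an upper test jet the $r$-coordinate is pinned to $u(x_0)$, and hence automatically lies in the interval where the truncation $\psi_R$ acts as the identity.

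First I would treat the $\Theta$-subharmonic case. Fix $x_0 \in X$ and an arbitrary upper test jet $(r, A) \in J^+_{x_0} u$; by the definition \eqref{superjets}, the test function $\varphi$ must satisfy $\varphi(x_0) = u(x_0)$, so $r = u(x_0)$. Since $|u| \le R$ by hypothesis, we have $|r| \le R$ and therefore $\psi_R(r) = r$. By $\Theta$-subharmonicity of $u$ at $x_0$, $(r, A) \in \Theta(x_0)$, which, combined with $\psi_R(r) = r$, gives $(\psi_R(r), A) \in \Theta(x_0)$; by the very definition of $\Theta_R(x_0)$ this means $(r, A) \in \Theta_R(x_0)$. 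Since $x_0 \in X$ and the test jet were arbitrary, $u \in \TSH_{\Theta_R}(X)$.

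For the $\widetilde{\Theta}$-subharmonic case, I would simply re-run the same argument with $\Theta$ replaced by $\widetilde{\Theta}$. The key point is that the formula recorded just before Lemma \ref{thetaMhaus}, namely
\[
\widetilde{\Theta_M}(x) = \{(r,A) : (\psi_M(r), A) \in \widetilde \Theta(x)\},
\]
identifies the truncation of the dual with the dual of the truncation, so the conclusion $(r, A) \in \widetilde{\Theta}_R(x_0)$ coming out of the argument is exactly $(r, A) \in \widetilde{\Theta_R}(x_0)$. I do not anticipate any real obstacle: the entire content of the lemma is that the truncation $\psi_R$ is invisible on the relevant jets because $r = u(x_0)$ is already bounded by $R$. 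No regularity of $\Theta$, nor duality machinery beyond the stated formula, is needed.
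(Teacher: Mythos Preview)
Your proof is correct and follows essentially the same approach as the paper: both argue that for any upper test jet $(r,A)\in J^+_{x_0}u$ one has $r=u(x_0)$ with $|r|\le R$, hence $\psi_R(r)=r$, so $(r,A)\in\Theta(x_0)$ yields $(\psi_R(r),A)\in\Theta(x_0)$ and therefore $(r,A)\in\Theta_R(x_0)$; the dual case is handled identically via the formula $\widetilde{\Theta_R}(x)=\{(r,A):(\psi_R(r),A)\in\widetilde{\Theta}(x)\}$.
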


\begin{proof} Fix any $x_0 \in X$, and let $\varphi$ be $C^2$ near $x_0$, $\varphi(x_0) = u(x_0)$, and $u-\varphi$ have a local maximum at $x_0$. Since $|u(x_0)| \le R$, $\varphi(x_0) = \psi_R(\varphi(x_0))$, hence
	\[
	(\psi_R(\varphi(x_0)), D^2 \varphi(x_0)) = (\varphi(x_0), D^2 \varphi(x_0)) \in \Theta(x),
	\]
	that gives $J^{+}_{x_0} u \in \Theta_R(x)$. The proof for $\widetilde{\Theta_{R}}$-subharmonic functions is completely analogous. \end{proof}

We conclude with the proof of the comparison principle.

\begin{proof}[Proof of Theorem \ref{comparison_cut}] Arguing as in Lemma \ref{lem:AAT_reduction_BB}, with $X \subset \subset \Omega$ arbitrary, it is enough to consider a pair of functions $u,v  \in \USC(\overline{X})$ which are bounded from below (and above). Hence, we assume that for some $R > 0$, $|u|, |v| \le R$ on $\Omega$. By Lemma \ref{subh_preservation}, $u$ and $v$ are $\Theta_{R}$-subharmonic and $\widetilde{\Theta_{R}}$-subharmonic respectively. Since $\Theta_{R}$ is a uniformly continuous elliptic map on $X$ by Lemma \ref{thetaMhaus}, by the subharmonic addition theorem \ref{thm:AAT} $u - v \in \QSHD(X)$ for each $X \subset \subset \Omega$ and hence $u - v \in \QSHD(\Omega)$. The comparison principle for $u,v$ on $\Omega$ then follows from the zero maximum principle (Theorem \ref{thm:ZMP}).
	
\end{proof}

\section{Comparison principles for admissible solutions of proper elliptic PDEs}\label{sec:examples}

Armed with the potential theoretic comparison principle for continuous proper elliptic maps developed in previous sections, we derive comparison principles for some fully nonlinear second order PDEs. The equations we treat will have variable coefficients and will be gradient-free and {\em proper elliptic} (which, in general, may require the imposition of an {\em admissibility constraint} in order to ensure the needed monotonicity). The strategy we employ will be to determine structural conditions on the defining operator $F$ for the PDE which allow us to define a proper elliptic map $\Theta$ whose subharmonics/ superharmonics correspond to viscosity subsolutions/supersolutions of the PDE (with perhaps admissibility constraints on the upper and lower test jets used in the viscosity formulation). We call this the {\em correspondence principle} (see Theorem \ref{thm:SHCVS}). That being done, an additional condition will be placed on the operator $F$ in order to ensure that $\Theta$ is continuous. This additional structural condition involves some mild regularity and strict monotonicity assumptions on $F$ (see property \eqref{UCF}). Hence the comparison principle for the PDE follows directly from the comparison principle for continuous proper elliptic maps $\Theta$ given in Theorem \ref{thm:CP}.

While we have no complete recipe to associate a (continuous) proper elliptic map $\Theta$ to any given operator $F$, we are able to complete the program described above for a large class of equations that enjoy suitable monotonicity properties on proper elliptic {\it subsets} of $\R \times \cS(N)$. We call this the {\em constrained case} and this will be developed in Section \ref{proper_branches} below. Moreover, in the {\em unconstrained case}, when no admissibility constraint is needed, we will show that a natural choice of $\Theta$ can be made so that the correspondence principle holds without admissibility constraints on the upper and lower test jets in the (standard) viscosity formulation (see Remark \ref{rem:UC_Case}). Finally, we will present some comparison principles for two examples of fully nonlinear PDEs (one constrained and one unconstrained), to illustrate how our general theory applies in specific situations.

\subsection{Proper elliptic branches and admissible viscosity solutions of PDEs}\label{proper_branches}

We begin with the notion of proper ellipticity for a nonlinear equation
\begin{equation}\label{FNLE}
F(x, u(x), D^2u(x)) = 0, \quad x \in \Omega
\end{equation}
where $F: \Omega \times  \R \times \cS(N) \to \R$ is a continuous function satisfying
\begin{equation}\label{GammaNE}
\mbox{$\Gamma(x) := \{ (r,A) \in \R \times \cS(N): \ F(x,r,A) = 0 \} \neq \emptyset$ \ \ for each $x \in \Omega$.}
\end{equation}
We will call such an $F$ a {\em gradient-free operator}.

\begin{defn}\label{PE_branch} 
	Let $F$ be a gradient-free operator. The equation \eqref{FNLE} determined by $F$ is
	said to be  {\em proper elliptic} if there exists a proper elliptic map $\Theta: \Omega \to \wp(\R \times\cS(N))$ such that
	\begin{equation}\label{eq:branch1}
	\partial \Theta(x) \subset \Gamma(x), \quad x \in \Omega.
	\end{equation}
	In that case one calls the differential inclusion
	\begin{equation}\label{eq:branch2}
	J_xu = (u(x), D^2u(x)) \in \partial \Theta(x) , \quad x \in \Omega
	\end{equation}
	a {\em proper elliptic branch} of the equation \eqref{FNLE} {\em defined by $\Theta$}.
	\end{defn}

Notice that the definition depends only on the sets $\Gamma(x)$ and not on the particular form of the operator $F$, which was the insight of Krylov \cite{Kv95} for his general notion of ellipticity. Recall that $\Theta$ is a proper elliptic map if for each $x \in \Omega$
\begin{equation}\label{PEM1}
\Theta(x) \subsetneq \R \times \cS(N)\ \text{ is closed and non-empty}
\end{equation} 
and   
\begin{equation}\label{PEM2}
\Theta(x) + \cQ \subset \Theta(x) \ \ \text{where} \ \cQ = \cN \times \cP = \{(r,A) \in \R \times \cS(N): r \leq 0, A \geq 0\}.
\end{equation}
For gradient-free equations, ellipticity in the sense of Krylov requires only the weaker monotonicity assumption $\Theta(x) + (\{0\} \times \cP) \subset \Theta(x)$ for each $x \in \Omega$. Notice also that a given $F$ may admit many branches as $\Theta$ need not be unique. See section 2 of \cite{Kv95} for a discussion of this point.

Next, we turn to the question of structural conditions on $F$ for which proper elliptic branches can be defined. We will start by asking that at least some proper elliptic map $\Phi$ exists along which $F$ is proper elliptic ($\cQ$-monotone). One might think of the maximal such proper elliptic map $\Phi$. Subsequently, we will examine further conditions on the pair $(F, \Phi)$  for which there is a natural proper elliptic map $\Theta$ which determines a proper elliptic branch of the equation \eqref{FNLE}.  

\begin{defn}\label{defn:PEO} Let $F$ be a gradient-free operator. We say that $F$ is {\em proper elliptic} if there exists a proper elliptic map $\Phi: \Omega \to \wp(\R \times\cS(N))$ such that 
		\begin{equation}\label{PEO}
	F(x,r + s, A + P) \geq F(x,r,A) \ \ \forall x \in \Omega, (r,A) \in \Phi(x), (s,P) \in \cQ.
	\end{equation}
	In this case $(F,\Phi)$ will be called a \em{proper elliptic pair}.
	\end{defn}

Given a proper elliptic pair $(F, \Phi)$, in general, $\Phi$ will not satisfy the {\em branch condition} $\partial \Phi(x) \subset \Gamma(x)$. We will examine one general situation in which a suitable subset $\Theta(x)$ of $\Phi(x)$ for each $x \in \Omega$ does indeed determine a proper elliptic branch of \eqref{FNLE}.   

Before stating a general result, a simple example is instructive. Consider the following Monge-Amp\`ere equation 
\begin{equation}\label{MAE}
 -u(x) \det(D^2(x)) = f(x), \ \ x \in \Omega,
\end{equation}
where $f$ is continuous and nonnegative. The operator $F(x,r,A) = -r \det(A) - f(x)$ is clearly $\cQ$ monotone on all of $\cQ = \cN \times \cP$; that is, for each $x \in \Omega, (r,A) \in \cQ$ and $(s,P) \in \cQ$ one has
$$
F(x,r + s, A + P) = -(r + s) \det(A + P) - f(x) \geq -r \det(A) - f(x) = F(x,r,A).
$$
Hence for the proper elliptic map defined by $\Phi(x) = \cQ$ for each $x \in \Omega$ one has that $(F, \Phi)$ a proper elliptic pair. In addition, it is clear that this constant map $\Phi$ is the maximal map for which $F$ restricted to $\Phi$ is $\cQ$-monotone. Now, for each $x \in \Omega$ one has
$$ 
\Gamma(x) = \{ (r,A) \in \R \times \cS(N): -r \det(A) = f(x) \}
$$
while 
$$
\partial \Phi(x) = \{ (r,A) \in \cN \times \cP: r = 0 \ \ \text{or} \ A \in \partial \cP \}
$$
and hence $F(x,r,A) = f(x)$ for each $(r,A) \in \partial \Phi(x)$ and the branch condition $\partial \Phi(x) \subset \Gamma(x)$ holds only at points where $f(x) = 0$. This suggests reducing $\Phi$ to
$$
\Theta(x):= \{(r,A) \in \cQ: \ -r \det(A) - f(x) \geq 0 \},
$$
where one easily checks that $(F, \Theta)$ is a proper elliptic pair and that $\partial \Theta(x) \subset \Gamma(x)$ so that $\Theta$ defines an elliptic branch of \eqref{MAE}.

We now give the general statement suggested by this example, where we recall that $\Gamma(x) = \{(r,A) \in \R \times \cS(N): F(x,r,A) = 0\}$.

\begin{thm}[Proper elliptic branches]\label{pick_branch} Let $(F, \Phi)$ be a proper elliptic pair; that is, the gradient-free operator is $\cQ$-monotone when restricted to the proper elliptic map $\Phi$ in the sense \eqref{PEO}. Assume that the following two conditions hold: 
\begin{equation}\label{PB1}
	\Phi(x) \cap \Gamma(x) \neq \emptyset \ \ \text{for each} \ x \in \Omega;
	\end{equation}
	\begin{equation}\label{PB2}
	\partial \Phi(x) \subset \{ (r,A) \in \R \times \cS(N): F(x,r,A) \leq 0\}.
	\end{equation}
	 Then, the map $\Theta: \Omega \to \wp(\R \times \cS(N))$ defined by
	\begin{equation}\label{def_branch1}
	\Theta(x) := \{ (r,A) \in \Phi(x):  \ F(x,r,A) \geq 0 \}
	\end{equation}
	is a proper elliptic map and $\Theta$ defines a proper elliptic branch of the PDE \eqref{FNLE} determined by $F$; that is,
\begin{equation}\label{branch_condition}
	\partial \Theta(x) \subset \Gamma(x)
	\end{equation}
	\end{thm}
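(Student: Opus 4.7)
The plan is to verify the defining properties of a proper elliptic map for $\Theta$ directly from the hypotheses, and then to obtain the branch condition via a short contradiction argument that splits according to whether a boundary point of $\Theta(x)$ lies in the interior of $\Phi(x)$ or on $\partial \Phi(x)$.

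First, I would check the properties in Definition \ref{proper_ell_map} for $\Theta(x) := \{(r,A)\in \Phi(x) : F(x,r,A)\ge 0\}$ at each fixed $x\in\Omega$. Non-emptiness is immediate from the compatibility hypothesis \eqref{PB1}: any $(r,A)\in\Phi(x)\cap\Gamma(x)$ satisfies $F(x,r,A)=0\ge 0$ and so lies in $\Theta(x)$. Closedness is the intersection of the closed set $\Phi(x)$ with the closed sublevel-type set $\{(r,A) : F(x,r,A)\ge 0\}$, closed by continuity of $F$. Properness is automatic since $\Theta(x)\subset\Phi(x)\subsetneq \R\times\cS(N)$. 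For $\cQ$-monotonicity, if $(r,A)\in\Theta(x)$ and $(s,P)\in\cQ$, then on the one hand $(r+s,A+P)\in\Phi(x)$ because $\Phi$ is a proper elliptic map, and on the other hand \eqref{PEO} (which applies since $(r,A)\in\Phi(x)$) gives $F(x,r+s,A+P)\ge F(x,r,A)\ge 0$. Hence $(r+s,A+P)\in\Theta(x)$.

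The main step is the branch condition \eqref{branch_condition}. Fix $(r,A)\in\partial\Theta(x)$. Since $\Theta(x)$ is closed, $(r,A)\in\Theta(x)$, so $F(x,r,A)\ge 0$. Arguing by contradiction, suppose $F(x,r,A)>0$, and distinguish two cases according to where $(r,A)$ sits relative to $\Phi(x)$. If $(r,A)\in\Phi(x)^\circ$, then by continuity of $F$ there is a neighborhood of $(r,A)$ contained in $\Phi(x)$ on which $F(x,\cdot,\cdot)>0$, so that the whole neighborhood lies in $\Theta(x)$; this puts $(r,A)$ in $\Theta(x)^\circ$, contradicting $(r,A)\in\partial\Theta(x)$. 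If instead $(r,A)\in\partial\Phi(x)$, then \eqref{PB2} forces $F(x,r,A)\le 0$, contradicting $F(x,r,A)>0$. Both cases being impossible, $F(x,r,A)=0$, i.e.\ $(r,A)\in\Gamma(x)$.

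I do not expect a serious obstacle: the only place where some care is required is to recognize that $\partial\Theta(x)$ can only be reached either through the interior of $\Phi(x)$ (where the zero level set of $F$ must intervene) or through $\partial\Phi(x)$ (where hypothesis \eqref{PB2} rules out the problematic sign), and that properness of $\Theta(x)$ rests on nothing more than the inclusion $\Theta(x)\subset\Phi(x)$ together with the proper ellipticity of $\Phi$. The guiding motivation given in the paper by the Monge--Amp\`ere example \eqref{MAE} already exhibits exactly this dichotomy.
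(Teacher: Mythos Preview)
Your proof is correct and follows essentially the same approach as the paper. The paper phrases the branch condition by noting that $\partial\Theta(x)$ is contained in the union of $\partial\Phi(x)\cap\{F\ge 0\}$ and $\Phi(x)\cap\{F=0\}$, then uses \eqref{PB2} on the first piece; your contradiction argument with the $\Phi(x)^\circ$/$\partial\Phi(x)$ dichotomy is the same split written out pointwise.
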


\begin{proof} This is a generalization of \cite[Proposition 5.1]{CP17}. For each $x \in \Omega$, $\Theta(x) \neq \emptyset$ by the first condition \eqref{PB1} and is not all of $\R \times \cS(N)$ since $\Phi(x)$ is a proper subset (by Definition \ref{proper_ell_map}). Moreover $\Theta(x)$ is closed since $\Phi(x)$ is closed and $F$ is continuous, where it would suffice to have $F(x, \cdot, \cdot)$ upper semicontinuous for each $x$ fixed. Hence $\Theta$ satisfies the property \eqref{PE1} of a proper elliptic map. For the $\cQ$-monotonicity condition \eqref{PE2}, notice that for each $x \in \Omega$ and for each $(r,A) \in \Theta(x) \subset \Phi(x)$ one has
$$
\mbox{$(r,A) + (s,P) \in \Phi(x)$, \quad for each $s \leq 0$ and $P \geq 0$}
$$
by the $\cQ$-monotonicity property of $\Phi$. Using the $\cQ$-monotonicity of $F$ restricted to $\Phi$ \eqref{PEO} and the definition \eqref{def_branch1} of $\Theta$ one has
$$
F(x, r + t, A + P) \geq F(x,r,A) \geq 0,
$$
and hence $\Theta$ is a proper elliptic map.

It remains only to check that $\Theta$ defines a branch; that is, that \eqref{branch_condition} holds. One easily checks that $\partial \Theta(x)$ is the union of two sets
\begin{equation}\label{EBPhi2}
  \partial \Phi(x) \cap \{ (r,A) \in \R \times \cS(N): F(x,r, A) \geq 0 \}
\end{equation}
and
\begin{equation}\label{EBPhi3}
\Phi(x) \cap \{ (r,A) \in \R \times \cS(N): F(x,A) = 0 \},
\end{equation}
which yields \eqref{branch_condition} if the branch condition \eqref{PB2} holds since $\Phi(x)$ is closed by definition.
\end{proof}

In addition to the equation \eqref{MAE} mentioned above, examples of equations for which this proposition applies include elliptic equations with $F = F(x,A)$ independent of $r$ as treated in \cite{CP17}. Additional examples, where one also has the comparison principle, will be given in the next subsection.

\smallskip

Now that a proper elliptic branch of the PDE (constrained by $\Phi$) is defined by the map $\Theta$ in \eqref{def_branch1}, we turn to the definition of $\Phi$-admissible viscosity sub/supersolutions to the equation \eqref{FNLE}, with the idea of establishing the equivalence between such admissible sub/supersolutions and $\Theta$-sub/superharmonics (Definition \ref{defn:TSH}) for the map $\Theta$. We will again make use of the {\em upper and lower test jets}, which we recall are defined for each fixed $x_0 \in \Omega$ by
\begin{equation}\label{UTJ}
J^{+}_{x_0} u := \{ (\varphi(x_0),D^2 \varphi(x_0)):  \varphi \ \text{is} \ C^2 \ \text{near} \ x_0, \  u \leq \varphi \ \text{near $x_0$ with equality in} \ x_0 \}
\end{equation}
and
\begin{equation}\label{LTJ}
J^{-}_{x_0} u := \{ (\varphi(x_0),D^2 \varphi(x_0)):  \varphi \ \text{is} \ C^2 \ \text{near} \ x_0, \  u \geq \varphi \ \text{near $x_0$ with equality in} \ x_0 \}.
\end{equation}

\begin{defn}\label{Vs_def} Let $F: \Omega \times \R \times \cS(N) \to \R$ be continuous and $\Phi: \Omega \to \wp(\R \times \cS(N))$ a proper elliptic map. 
\begin{itemize}
\item[(a)] One says that $u \in \USC(\Omega) $ is a {\em $\Phi$-admissible viscosity subsolution of \eqref{FNLE} in $\Omega$} if for every $x_0 \in \Omega$  one has
\begin{equation}\label{Vss}
(r,A) \in J^{+}_{x_0} u \ \ \Rightarrow \ \  F(x_0,r,A) \geq 0 \ \ \text{and} \ \ (r,A) \in \Phi(x_0)
\end{equation}
\item[(b)] One says that $u \in \LSC(\Omega) $ is a {\em $\Phi$-admissible viscosity supersolution of \eqref{FNLE} in $\Omega$} if for every $x_0 \in \Omega$ one has
\begin{equation}\label{VSs}
(r,A) \in J^{-}_{x_0} u \ \ \Rightarrow \ \  F(x_0,r,A) \leq 0 \ \ \text{or} \ \ (r,A) \not\in [\Phi(x_0)]^{\circ}
\end{equation}
\end{itemize}
One says that $u \in C(\Omega)$ a {\em $\Phi$-admissible viscosity solution of \eqref{FNLE} in $\Omega$} if both conditions (a) and (b) hold.
\end{defn}

Notice that for $\Theta(x):= \{(r,A) \in \Phi(x): \ F(x,r,A) \geq 0 \}$, the $\Phi$-admissible subsolution condition \eqref{Vss} is equivalent to
\begin{equation}\label{Tsx}
	J^{+}_{x_0} u \subset \Theta(x_0),
\end{equation}
which defines $u \in \USC(\Omega)$ being $\Theta$-subharmonic in $x_0$ (see Definition \ref{Vs_def}). On the other hand, recall that $u \in \LSC(\Omega)$ is $\Theta$-superharmonic in $x_0$ if 
\begin{equation}\label{TSx}
J^{-}_{x_0} u \subset \left(  [\Theta(x_0)]^{\circ} \right)^c.
\end{equation}
Under an additional hypothesis of {\em non degeneracy}, the condition \eqref{TSx} is equivalent to the condition \eqref{VSs}.

\begin{thm}[Correspondence principle]\label{thm:SHCVS} Let $(F, \Phi)$ be a proper elliptic pair and let $\Theta$ be the corresponding proper elliptic map defined by \eqref{def_branch1}; that is,
$$
	\Theta(x) := \{ (r,A) \in \Phi(x): \ \ F(x,r,A) \geq 0 \}.
$$
 Then the following equivalences hold.
	\begin{itemize}
		\item[(a)] A function $u \in \USC(\Omega)$ is a $\Phi$-admissible viscosity subsolution of \eqref{FNLE} in $\Omega$ if and only if $u \in \TSH(\Omega)$ ($u$ is $\Theta$-subharmonic in $\Omega$).
		\item[(b)] A function $u \in \LSC(\Omega)$ is a $\Phi$-admissible viscosity supersolution of \eqref{FNLE} in $\Omega$ if and only if $-u \in \TSHD(\Omega)$ ($u$ is $\Theta$-superharmonic in $\Omega$) 
		\underline{provided} that the following non-degeneracy condition is satisfied:
		\begin{equation}\label{NDC}
		\mbox{ $ F(x,r,A) > 0$ for each $x \in \Omega$ and each $(r,A) \in [\Theta(x)]^{\circ}$.}
		\end{equation}
	\end{itemize}
\end{thm}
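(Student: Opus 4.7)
The plan is to reduce both assertions to elementary set-theoretic reformulations of the pointwise definitions; no limiting or perturbative arguments are required beyond the continuity of $F$ in $(r,A)$. The real content is isolating where the non-degeneracy hypothesis \eqref{NDC} enters.

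For part (a), I would simply unpack definitions. By the definition \eqref{def_branch1} of $\Theta(x_0)$, the compound condition ``$F(x_0,r,A) \geq 0$ and $(r,A) \in \Phi(x_0)$'' appearing in \eqref{Vss} is literally ``$(r,A) \in \Theta(x_0)$''. Hence the $\Phi$-admissible subsolution property for $u$ at $x_0$ is the condition $J^+_{x_0}u \subset \Theta(x_0)$, which is precisely $\Theta$-subharmonicity at $x_0$ in the sense of Definition \ref{defn:TSH}. No argument beyond this tautology is needed.

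For part (b), my plan is to establish the pointwise set identity
\[
[\Theta(x_0)]^{\circ} = [\Phi(x_0)]^{\circ} \cap \{(r,A) \in \R \times \cS(N) : F(x_0,r,A) > 0\}, \qquad x_0 \in \Omega.
\]
Once this identity is in hand, complementing in $\R \times \cS(N)$ yields
\[
\bigl([\Theta(x_0)]^{\circ}\bigr)^c = \bigl([\Phi(x_0)]^{\circ}\bigr)^c \cup \{(r,A) : F(x_0,r,A) \leq 0\},
\]
so that the superharmonic condition $J^-_{x_0}u \subset \bigl([\Theta(x_0)]^{\circ}\bigr)^c$ becomes the disjunction appearing in \eqref{VSs}. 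This delivers both implications of (b) simultaneously.

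To prove the identity, the inclusion $\supseteq$ is a direct continuity argument that does not require \eqref{NDC}: if $(r,A)$ lies in the open set $[\Phi(x_0)]^{\circ} \cap \{F(x_0,\cdot,\cdot) > 0\}$, then a small product neighborhood sits inside $\Phi(x_0)$ by openness and inside $\{F(x_0,\cdot,\cdot) > 0\}$ by continuity of $F(x_0,\cdot,\cdot)$, hence inside $\Theta(x_0)$. The reverse inclusion $\subseteq$ is where non-degeneracy does its work: from $(r,A) \in [\Theta(x_0)]^{\circ}$ one obtains $(r,A) \in [\Phi(x_0)]^{\circ}$ via $\Theta(x_0) \subset \Phi(x_0)$, while the hypothesis \eqref{NDC} directly supplies $F(x_0,r,A) > 0$.

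I expect no real technical obstacle; the only point that deserves comment is why \eqref{NDC} is essential for (b). Without it, $F$ could vanish on a subset with nonempty interior inside $\Phi(x_0)$, producing ``flat'' interior points of $\Theta(x_0)$ at which $F = 0$. Such an $(r,A)$ would satisfy the supersolution disjunction (since $F(x_0,r,A) \leq 0$) and yet lie in $[\Theta(x_0)]^{\circ}$, breaking the implication from supersolution to superharmonicity. The hypothesis \eqref{NDC} is precisely what excludes this pathology.
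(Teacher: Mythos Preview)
Your proof is correct and follows essentially the same strategy as the paper: both reduce part (b) to the pointwise set identity
\[
\bigl([\Theta(x)]^{\circ}\bigr)^c = \bigl([\Phi(x)]^{\circ}\bigr)^c \cup \{(r,A) : F(x,r,A) \leq 0\},
\]
and both identify the non-degeneracy condition \eqref{NDC} as exactly what is needed for one of the two inclusions.

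The route to that identity differs, however. You argue directly on the level of interiors, proving $[\Theta(x)]^{\circ} = [\Phi(x)]^{\circ} \cap \{F(x,\cdot,\cdot) > 0\}$ via the observation that the right-hand side is an open subset of $\Theta(x)$ (continuity of $F$) and that \eqref{NDC} forces the reverse inclusion; complementation then finishes. The paper instead passes to the Dirichlet dual, computing $[\wt{\Theta}(x)]^{\circ} = -[\Theta(x)]^c$ explicitly, taking closures using the property $\Theta(x) = \overline{[\Theta(x)]^{\circ}}$ for proper elliptic sets, and then invoking \eqref{NDC} to match the closure $\overline{\{F(x,-r,-A) < 0\}}$ with the sublevel set $\{F(x,-r,-A) \leq 0\}$. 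Your approach is more elementary and self-contained for this particular statement; the paper's approach fits more naturally into the duality machinery developed throughout and makes the expression for $\wt{\Theta}(x)$ explicit, which may be of independent use.
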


\begin{proof} The equivalence of part (a) has been noted above. For the equivalence of part (b), notice that the $\Phi$-admissible supersolution condition \eqref{VSs} is clearly equivalent to
\begin{equation}\label{VS1}
J^{-}_{x_0} u \subset \left([\Phi(x_0)]^{\circ}\right)^c \cup \{ (r,A) \in \R \times \cS(N): \ F(x_0,r,A) \leq 0 \}
\end{equation}
and by comparing \eqref{VS1} with \eqref{TSx}, it suffices to show that for each fixed $x \in \Omega$ one has
\begin{equation}\label{VS2}
\left([\Theta(x)]^{\circ} \right)^c = \left([\Phi(x)]^{\circ}\right)^c \cup \{ (r,A) \in \R \times \cS(N): \ F(x,r,A) \leq 0 \}.
\end{equation}
Making use of the duality $\wt{\Theta}(x) := - \left([\Theta(x)]^{\circ} \right)^c$, by negating the elements in \eqref{VS2}, it suffices to show that 
\begin{equation}\label{VS3}
\wt{\Theta}(x) =  \Psi(x) \ \ \text{where} \ \ \Psi(x):=\wt{\Phi}(x) \cup \{ (r,A) \in \R \times \cS(N): \ F(x,-r,-A) \leq 0 \} .
\end{equation}
We calculate directly the dual $\wt{\Theta}(x)$ using the property $[\widetilde{\Theta}(x)]^{\circ} = - [ \Theta(x) ]^{c}$ which follows from the reflexivity of $\Theta(x)$. By definition, we have
\begin{equation*}\label{calc_dual1}
	\Theta(x) = \Phi(x) \cap \{ (r,A) \in \R \times \cS(N): \ F(x,r,A) \geq 0 \}, \ \ x \in \Omega,
	\end{equation*}
and hence
\begin{equation*}\label{calc_dual2}
[ \Theta(x) ]^{c}= [ \Phi(x) ]^{c} \cup \{ (r,A) \in \R \times \cS(N): \ F(x,r,A) < 0 \}, \ \ x \in \Omega.
\end{equation*}
Hence
\begin{equation*}\label{calc_dua13}
-[ \Theta(x) ]^{c}= [\wt{\Phi}(x)]^{\circ}  \cup \{ (r,A) \in \R \times \cS(N): \ F(x,-r,-A) < 0 \}, \ \ x \in \Omega,
\end{equation*}
which yields
\begin{equation*}\label{VS4}
[ \wt{\Theta}(x) ]^{\circ}= [\wt{\Phi}(x)]^{\circ}  \cup \{ (r,A) \in \R \times \cS(N): \ F(x,-r,-A) < 0 \}, \ \ x \in \Omega
\end{equation*}
Now we take the closure in $\R \times \cS$ using the property \eqref{closure_interior} for the proper elliptic maps $\wt{\Theta}$ and $\wt{\Phi}$ to conclude
\begin{equation}\label{VS4}
 \wt{\Theta}(x) = \wt{\Phi}(x)  \cup \overline{ \{ (r,A) \in \R \times \cS(N): \  F(x,-r, -A) < 0 \}}, \ \ x \in \Omega
\end{equation}
We just need to check now that $\Psi(x)$ as defined in \eqref{VS3} equals $\Theta(x)$ as calculated in \eqref{VS4}. By the continuity of $F$ one clearly has $\wt{\Theta}(x) \subset \Psi(x)$. For the reverse inclusion it suffices to show that
\begin{equation}\label{VS5}
(r,A) \in \R \times \cS(N) \ \text{with} \ F(x,-r,-A) \leq 0 \ \ \Rightarrow \ \ (r,A) \in \wt{\Theta}(x).
\end{equation}
By the non degeneracy condition \eqref{NDC}, $(-r,-A) \notin [\Theta(x)]^{\circ}$ which by duality means $(r,A) \in - \left( [\Theta(x)]^{\circ} \right)^c = \wt{\Theta}(x)$, as needed.
\end{proof}

A few observations about this correspondence principle are in order.

\begin{rem}\label{rem:CEP}
	For $F, \Phi$ and $\Theta$ as in Theorem \ref{thm:SHCVS}, if one also assumes the branch condition \eqref{PB2}, then $\Theta$ defines a proper elliptic branch of the PDE \eqref{FNLE} by Theorem \ref{pick_branch}. Moreover, by adding in the non degeneracy condition \eqref{NDC} (which is not required for the Correspondence Principle of \ref{thm:SHCVS}) it follows that 
	\begin{equation}\label{CP1} 
	\partial \Theta(x) = \{ (r,A) \in \Theta(x): \ F(x,r,A) = 0 \} = \Theta(x) \cap \Gamma(x), \ \ \forall \, x \in \Omega
	\end{equation}
	and
		\begin{equation}\label{CP2}
		c_0: = \inf_{(r,A) \in \Theta(x)} F(x, r, A) \ \text{is finite} \ (c_0 = 0).
		\end{equation}
	Hence, borrowing the terminology of \cite{CHLP19} in the constant coefficient setting, we can say that  $(F, \Theta)$ is a {\em (constrained case) compatible proper elliptic pair} for which the correspondence principle holds. 
	\end{rem}

Next, we briefly discuss the ``standard'' case in which there is no a priori need to impose admissibility constraints. In this case we will derive a correspondence principle between standard viscosity subsolutions (supersolutions) of proper elliptic operators and $\Theta$-subharmonic (superharmonic) functions under mild non-degeneracy conditions (see \eqref{Theta_UC} and \eqref{F_UC}). 

\begin{rem}[The unconstrained case]\label{rem:UC_Case} If $F$ is a gradient-free operator ($F$ continuous with $\Gamma(x) \neq \emptyset$) which is proper elliptic on all of $\R \times \cS(N)$; that is,
	$$
	F(x, r+s,A+P) \geq F(x,r,A) \ \ \forall \, x \in \Omega, (r,A) \in \R \times \cS(N), (s,P) \in \cQ,
	$$
	then there is no need to constrain $F$ to some proper subset of the gradient-free jet space $\cJ:= \R \times \cS(N)$ in order for $F$ to be $\cQ$-monotone. In this case, by letting $\Phi = \cJ$ in Definition \ref{Vs_def} one recovers usual notion of viscosity subsolutions and supersolutions since the condition $(r,A) \in \cJ$ in \eqref{Vss} holds trivially and the possibility $(r,A) \notin [\cJ]^{\circ} = \cJ$ cannot occur. We will say that $(F, \cJ)$ is an {\em (unconstrained case) compatible proper elliptic pair}.
	
	Now, if one defines the map $\Theta: \Omega \to \wp(\R \times \cS(N))$ as before with $\Phi \equiv \cJ$; that is, 
	\begin{equation}\label{Theta_UC}
	\Theta(x) := \{ (r,A) \in \cJ: \ F(x,r,A) \geq 0\},
\end{equation}
	then $\Theta(x)$ will be closed by the continuity of $F$ and non empty as $\Gamma(x) \neq \emptyset$. In addition, $\Theta(x)$ will be a proper subset of $\cJ$ if 
	\begin{equation}\label{F_UC}
\mbox{ the fiber $\{ (r,A) \in \cJ: F(x, r, A) < 0 \}$ is not empty for each $x \in \Omega$}.
\end{equation} 
Hence $\Theta$ will be a proper elliptic map since for each $x \in \Omega$, the $\cQ$-monotonicity of $F$ on all of $\cJ$ yields the $\cQ$-monotonicity of $\Theta(x)$. Finally, in this case, the non-degeneracy condition \eqref{NDC} becomes
	\begin{equation}\label{Theta_ND}
\partial \Theta(x) = \{ (r,A) \in \Theta(x): \ F(x,r,A) = 0\} = \Gamma(x)
\end{equation}
and so $(F, \Theta)$ is a compatible pair in the sense \eqref{CP1}-\eqref{CP2}. Hence
one has correspondence principle between $\Theta$-superharmonic functions and standard (unconstrained) viscosity supersolutions of the PDE \eqref{FNLE}.
\end{rem}

\subsection{Comparison principles for PDEs from potential theoretic comparison }

In the previous subsection, we have discussed fiberwise properties (i.e., for $x \in \Omega$ fixed) of the operator $F(x,\cdot,\cdot)$ that ensure that the map $\Theta$ (defined by \eqref{def_branch1} in the constrained case and by \eqref{Theta_UC} in the unconstrained case) is: 1) proper elliptic, 2) defines a proper elliptic branch of the PDE \eqref{FNLE} and 3) satisfies the correspondence principle (of Theorem \eqref{thm:SHCVS} in the constrained case and of Remark \ref{rem:UC_Case} in the unconstrained case). 
We now discuss structural conditions on $F$ as $x$ varies which will ensure that the associated proper elliptic map $\Theta$ is continuous. Combining this continuity with the correspondence principle of Theorem \ref{thm:SHCVS} will then yield the validity of the comparison principle for ($\Phi$-admissible) viscosity solutions of the PDE \eqref{FNLE}, by applying the potential theoretic version of comparison (Theorem \ref{thm:CP}) for continuous proper elliptic maps. 
\begin{thm}[Continuity of proper elliptic maps]\label{UCbranch} Let $F \in C(\Omega \times \R \times \cS(N))$ be a gradient-free operator and $\Theta: \Omega \to \wp(\R \times \cS(N))$ be a proper elliptic map of the form
	\begin{equation}\label{Theta_def}
	\Theta(x) := \{ (r,A) \in \Phi(x): \ F(x,r,A) \geq 0 \}
	\end{equation}
	where either
	\begin{equation}\label{CPhi}
	\mbox{$\Phi$ is a continuous proper elliptic map on $\Omega$ \ \ (constrained case)}
	\end{equation}
	or
	\begin{equation}\label{UCPhi}
	\mbox{$\Phi(x) = \cJ = \R \times \cS(N)$ for each $x \in \Omega$ \ \ (unconstrained case).}
	\end{equation}
Assume that the pair $(F, \Phi)$ satisfies the following regularity condition: for every $\Omega' \subset \subset \Omega$ and for every $\eta > 0$, there exists $\delta= \delta(\eta, \Omega') > 0$ such that 
	\begin{equation}\label{UCF}
	F(y, r - \eta, A + \eta I) \geq F(x, r, A) \quad \forall (r, A) \in \Phi(x), \forall x,y \in \Omega' \ {\rm with \ } |x - y| < \delta.
	\end{equation}
	Then, the proper elliptic map $\Theta$ is continuous.
\end{thm}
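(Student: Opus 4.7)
The plan is to verify the characterization (c) of uniform continuity given in Proposition \ref{UCHD}, namely that for each $\Omega' \subset\subset \Omega$ and each $\eta > 0$ there exists $\delta = \delta(\eta, \Omega') > 0$ such that
\[
    |x-y| < \delta \ \Rightarrow \ \Theta(x) + (-\eta, \eta I) \subset \Theta(y), \quad x,y \in \Omega'.
\]
Once this is established for every $\Omega' \subset\subset \Omega$, Proposition \ref{UCHD} combined with Remark \ref{rem:LUC} immediately yields that $\Theta$ is (Hausdorff) continuous on $\Omega$.

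So fix $\Omega' \subset\subset \Omega$ and $\eta > 0$. By hypothesis \eqref{UCF}, there exists $\delta_1 = \delta_1(\eta, \Omega') > 0$ such that for every $(r,A) \in \Phi(x)$ and every $x,y \in \Omega'$ with $|x-y| < \delta_1$,
\[
    F(y, r - \eta, A + \eta I) \geq F(x, r, A).
\]
In the constrained case \eqref{CPhi}, by Proposition \ref{UCHD} applied to the continuous proper elliptic map $\Phi$, there exists $\delta_2 = \delta_2(\eta, \Omega') > 0$ such that
\[
    |x - y| < \delta_2 \ \Rightarrow \ \Phi(x) + (-\eta, \eta I) \subset \Phi(y), \quad x,y \in \Omega';
\]
in the unconstrained case \eqref{UCPhi}, the inclusion is trivial and we can take $\delta_2 = +\infty$. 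Set $\delta := \min\{\delta_1, \delta_2\}$.

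Now pick any $(r,A) \in \Theta(x)$; by the definition \eqref{Theta_def} we have $(r,A) \in \Phi(x)$ and $F(x,r,A) \geq 0$. For $x,y \in \Omega'$ with $|x-y| < \delta$, the choice of $\delta_2$ gives $(r-\eta, A + \eta I) \in \Phi(y)$, while the choice of $\delta_1$ combined with $F(x,r,A) \geq 0$ gives
\[
    F(y, r - \eta, A + \eta I) \geq F(x, r, A) \geq 0.
\]
Both defining properties of $\Theta(y)$ are therefore satisfied by $(r - \eta, A + \eta I)$, proving the desired inclusion $\Theta(x) + (-\eta, \eta I) \subset \Theta(y)$.

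There is no real obstacle here: the technical content is entirely packaged inside the regularity condition \eqref{UCF} (which is precisely designed to transport the condition $F(x,r,A) \geq 0$ to $F(y, r - \eta, A + \eta I) \geq 0$ under a small translation in the $(r,A)$ variables), and inside the assumed continuity of the constraint map $\Phi$. The only mild point to keep track of is that one must simultaneously control $(r-\eta, A+\eta I) \in \Phi(y)$ and $F(y, r-\eta, A+\eta I) \geq 0$, which is handled by taking $\delta = \min\{\delta_1, \delta_2\}$; in the unconstrained case this additional bookkeeping disappears.
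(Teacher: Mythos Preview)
Your proof is correct and follows essentially the same approach as the paper's own proof: verify the characterization \eqref{uusc2} of Proposition \ref{UCHD} by combining the $\delta$ coming from the regularity condition \eqref{UCF} with the $\delta$ coming from the continuity of $\Phi$, and take their minimum. The paper's argument is identical in structure and content.
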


\begin{proof} 
	We will show that $\Theta$ is locally uniformly continuous. Since $\Theta$ is assumed to be proper elliptic, by Proposition \ref{UCHD}, it suffices to show that
	for every fixed $\Omega' \subset \subset \Omega$ and for every fixed $\eta > 0$, there exists $\delta_{\Theta}= \delta_{\Theta}(\eta, \Omega') > 0$ such that for each $x,y \in \Omega'$
	\begin{equation}\label{LUC_Theta}
	\mbox{$|x -y| < \delta_{\Theta} \ \Rightarrow \ \Theta(x) + (- \eta, \eta I) \subset \Theta(y)$.}
	\end{equation}
	In the constrained case \eqref{CPhi}, we have the validity of \eqref{LUC_Theta} with $\Phi$ in place of $\Theta$ for some $\delta_{\Phi} = \delta_{\Phi}(\eta, \Omega)$. It suffices to choose $\delta_{\Theta} = \min \{ \delta_{\Phi}, \delta \}$. Indeed, for each pair $x,y \in \Omega'$ with $|x-y| < \delta_{\Theta}$, pick an arbitrary $(r,A) \in \Theta(x)$ so that $(r, A) \in \Phi(x)$ and $F(x,r,A) \geq 0$, which by the continuity of $\Phi$ and the regularity property \eqref{UCF} yields
	\begin{equation}\label{CT1}
	(r - \eta, A + \eta I) \in \Phi(y) \ \ \text{and} \ \ F(y, r - \eta, A + \eta I) \geq F(x, r, A)  \geq 0,
	\end{equation}
	which yields the inclusion in \eqref{LUC_Theta}. 
	
	In the unconstrained case \eqref{UCPhi}, the constant map $\Phi \equiv \cJ$ is trivially continuous (\eqref{LUC_Theta} for $\Phi$ holds for every $\delta_{\Phi} > 0$ and hence it suffices to choose $\delta_{\Theta} = \delta$ and use the regularity condition \eqref{UCF}.
\end{proof}	

Before moving on to comparison principles, a few remarks are in order. 

\begin{rem}\label{rem:cont0} In Theorem \ref{UCbranch}, the structural condition \eqref{UCF} on $F$ is merely sufficient to ensure that a proper elliptic map $\Theta$ given by \eqref{LUC_Theta} is continuous. The (locally uniform) continuity of $\Theta$ is equivalent to the statement that: for every $\Omega' \subset \subset \Omega$ and for every $\eta > 0$, there exists $\delta= \delta(\eta, \Omega') > 0$ such that $\forall x,y \in \Omega' \ {\rm with \ } |x - y| < \delta$ one has
	\begin{equation}\label{UCF_defn}
	F(x,r,A) \geq 0  \ \ \text{and} \ \ (r, A) \in \Phi(x) \ \ \Rightarrow \ \ F(y, r - \eta, A + \eta I) \geq 0.
	\end{equation}
This condition is weaker, in general, than the structural condition \eqref{UCF} and hence useful to keep in mid for specific applications (see, for example, the proof of Theorem \ref{thm:CSLPE}). On the other hand, the structural condition \eqref{UCF} can be more easily compared to other structural conditions on $F$ present in the literature.	
	\end{rem}

\begin{rem}\label{rem:cont1}
	In Theorem \ref{UCbranch} we have \underline{assumed} that $\Theta$ defined by \eqref{Theta_def} is a proper elliptic map. By what we have done previously, we have sufficient conditions which guarantee that $\Theta$ is indeed proper elliptic. In particular, it is enough to assume that
	\begin{equation}\label{conti1}
	F(x,r+s,A + P) \geq F(x,r,A) \quad \forall x \in \Omega, (r, A) \in \Phi(x), (s,P) \in \cQ = \cN \times \cP 
	\end{equation}
	in order to ensure the needed $\cQ$-monotonicity of each $\Theta(x)$. Each $\Theta(x)$ will be closed (by the continuity of $F$) and non-empty provided
	\begin{equation}\label{conti2}
		\Gamma(x) \cap \Phi(x) \neq \emptyset \ \ \text{for each} \ x \in \Omega.
	\end{equation}
	The remaining condition $\Theta(x) \subsetneq \R \times \cS(N)$ is always satisfied in the constrained case $(\Theta(x) \subset \Phi(x) \subsetneq \R \times \cS(N)$). In the unconstrained case, one need only assume
		\begin{equation}\label{conti3}
	\{ (r,A) \in \cJ: \ F(x,r,A) < 0 \} \neq \emptyset \ \  \text{for each} \ x \in \Omega.
	\end{equation}
		\end{rem}

\begin{rem}\label{rem:conti2}
	The importance of having $\Theta$ be proper elliptic on $\Omega$ is twofold. On the one hand, we can exploit the formulation \eqref{uusc2} for proper elliptic maps, which makes the regularity condition \eqref{UCF} a natural one. On the other hand, proper ellipticity on all of $\Omega$ rules out the possibility that $\Theta (x) = \R \times \cS(N)$ on some proper subset $\Omega'$ of $\Omega$ but with $\Theta$  proper elliptic on $\Omega\setminus \Omega'$. In such a case, by picking any $x \in \Omega'$ and $y \in \Omega\setminus \Omega'$, since $\Theta(y) \subsetneq \R \times \cS(N)$, $d_{\mathcal{H}}(\R \times \cS(N), \Theta(y)) = +\infty$, as noted in \eqref{HDall}. This holds for pairs $x,y$ which are arbitrarily close.
\end{rem}

\begin{rem}\label{rem:conti3}
As a final comment, we note that when the pair $(F, \Phi)$ is proper elliptic, the condition \eqref{UCF} is restrictive only for $\eta > 0$ small. Indeed, if for some $\eta^* > 0$, and for each $\eta \in (0, \eta^*]$, there exists $\delta = \delta(\eta, \Omega) > 0$ such that \eqref{UCF} holds, the proper ellipticity  \eqref{conti1} implies that \eqref{UCF} continues to hold for each $\eta > \eta^*$ by taking $\delta(\eta, \Omega') = \delta(\eta^*, \Omega')$. 

\end{rem}
	
We conclude this subsection by stating a comparison principle for viscosity solutions of PDE \eqref{FNLE} in both constrained and unconstrained cases. 

\begin{thm}[Comparison principle: PDE version]\label{thm:CPpde} Given $F \in C(\Omega \times \R \times \cS(N), \R)$ and $\Phi: \Omega \to \wp(\R \times\cS(N))$ where either 
	\begin{equation}\label{TCP1}
\mbox{$\Phi$ is a continuous proper elliptic map on $\Omega$ \ \ (constrained case)}
\end{equation}
or
\begin{equation}\label{TCP2}
\mbox{$\Phi(x) = \cJ = \R \times \cS(N)$ for each $x \in \Omega$ \ \ (unconstrained case)}
\end{equation}
and let $\Theta: \Omega \to \wp(\R \times \cS(N))$ be defined by
\begin{equation}\label{TCP3}
\Theta(x) := \{ (r,A) \in \Phi(x): \ F(x,r,A) \geq 0 \}.
\end{equation}
Assume that $F$ restricted to $\Phi$ is proper elliptic; that is,
\begin{equation}\label{TCP4}
F(x,r + s, A + P) \geq F(x,r,A) \ \ \forall x \in \Omega, (r,A) \in \Phi(x), (s,P) \in \cQ
\end{equation}
and that the pair $(F, \Phi)$ satisfies the regularity property \eqref{UCF}; that is,
for every $\Omega' \subset \subset \Omega$ and for every $\eta > 0$, there exists $\delta= \delta(\eta, \Omega') > 0$ such that
\begin{equation}\label{TCP5}
F(y, r - \eta, A + \eta I) \geq F(x, r, A) \quad \forall (r, A) \in \Phi(x), \forall x,y \in \Omega' \ {\rm with \ } |x - y| < \delta.
\end{equation}
Assume the non-empty condition \eqref{PB1}; that is,
\begin{equation}\label{TCP6}
\Gamma(x) \cap \Phi(x) \neq \emptyset \ \ \text{for each} \ x \in \Omega
\end{equation}
and the non-degeneracy condition \eqref{NDC}; that is, 
	\begin{equation}\label{TCP7}
\mbox{ $ F(x,r,A) > 0$ for each $x \in \Omega$ and each $(r,A) \in [\Theta(x)]^{\circ}$.}
\end{equation}
Moreover, in the constrained case \eqref{TCP1} assume the branch condition \eqref{PB2}; that is,
	\begin{equation}\label{TCP8}
\partial \Phi(x) \subset \{ (r,A) \in \R \times \cS(N): F(x,r,A) \leq 0\}.
\end{equation}
and in the unconstrained case \eqref{TCP2} assume the properness condition \eqref{F_UC}; that is,
	\begin{equation}\label{TCP9}
\{ (r,A) \in \cJ: F(x, r, A) < 0 \} \neq \emptyset \ \ \text{for every} \ x \in \Omega.
\end{equation} 
Then, the map $\Theta$ is a continuous proper elliptic map and defines a proper elliptic branch of the PDE \eqref{FNLE};  that is,
\begin{equation}\label{TCP10}
F(x, u(x), D^2u(x)) = 0, \quad x \in \Omega
\end{equation}
and for every bounded domain $\Omega$ the comparison principle for the PDE \eqref{TCP10} holds; that is, 
\begin{equation}\label{TCP11}
\mbox{$u \leq v$ on $\partial \Omega \ \ \Rightarrow \ \ u \leq v$ in $\Omega$.}
\end{equation}
if $u$ is a $\Phi$-admissible viscosity subsolution of \eqref{TCP10} in $\Omega$ and  $u$ is a $\Phi$-admissible viscosity supersolution of \eqref{TCP10} in $\Omega$.
\end{thm}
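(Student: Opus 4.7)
The plan is to chain together the four main results already established earlier in the paper: Theorem \ref{pick_branch} (existence of a proper elliptic branch), Theorem \ref{UCbranch} (continuity of the branch map $\Theta$), Theorem \ref{thm:SHCVS} (correspondence principle between $\Phi$-admissible sub/supersolutions and $\Theta$-sub/superharmonics), and Theorem \ref{thm:CP} (the potential theoretic comparison principle). No new analytic work is required; the task is simply to verify that the hypotheses on $(F,\Phi)$ stated in the theorem feed correctly into each of these ingredients.

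First I would verify that $\Theta$ defined by \eqref{TCP3} is a proper elliptic map and defines a proper elliptic branch of \eqref{TCP10}. In the constrained case \eqref{TCP1}, the proper ellipticity of $(F,\Phi)$ from \eqref{TCP4}, the compatibility \eqref{TCP6}, and the branch condition \eqref{TCP8} are precisely the hypotheses of Theorem \ref{pick_branch}, which simultaneously yields that $\Theta$ is proper elliptic and that $\partial \Theta(x) \subset \Gamma(x)$ for every $x \in \Omega$. In the unconstrained case \eqref{TCP2}, one checks proper ellipticity directly as outlined in Remark \ref{rem:cont1}: the $\cQ$-monotonicity of each fiber $\Theta(x)$ comes from \eqref{TCP4}, closedness and non-emptiness follow from continuity of $F$ and \eqref{TCP6}, and \eqref{TCP9} ensures $\Theta(x) \subsetneq \cJ$; the branch identity $\partial \Theta(x) = \Gamma(x)$ is then precisely \eqref{Theta_ND} of Remark \ref{rem:UC_Case}.

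Once $\Theta$ is known to be proper elliptic, I would invoke Theorem \ref{UCbranch}: the continuity of $\Phi$ in case \eqref{TCP1} (trivial in case \eqref{TCP2}) together with the regularity condition \eqref{TCP5} on $(F,\Phi)$ produces the locally uniform continuity of $\Theta$ in the sense of \eqref{uusc2}. At this point $\Theta$ is a continuous proper elliptic map on $\Omega$, so the potential-theoretic framework is fully available. The correspondence principle (Theorem \ref{thm:SHCVS}), whose supersolution half requires the non-degeneracy condition \eqref{TCP7}, identifies $\Phi$-admissible viscosity subsolutions of \eqref{TCP10} with $u \in \TSH(\Omega)$ and $\Phi$-admissible viscosity supersolutions with $\Theta$-superharmonic functions (equivalently, $-v \in \TSHD(\Omega)$).

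Finally, the desired comparison \eqref{TCP11} between $u$ and $v$ reduces to the comparison principle for $\Theta$-subharmonic and $\Theta$-superharmonic functions on $\Omega$, which is exactly the conclusion of Theorem \ref{thm:CP} applied to the continuous proper elliptic map $\Theta$. Since each step is a direct appeal to an earlier result, I do not anticipate a serious obstacle; the only care required is to keep the constrained and unconstrained cases running in parallel and to verify that in the unconstrained case $\Phi = \cJ$ makes the admissibility clauses in Definition \ref{Vs_def} vacuous, so that the conclusion \eqref{TCP11} specializes to standard viscosity sub/supersolutions as discussed in Remark \ref{rem:UC_Case}. If anything is delicate at all it is this matching of hypotheses across the two cases, but the correspondence and continuity results have been designed precisely to accommodate both.
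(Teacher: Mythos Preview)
Your proposal is correct and matches the paper's own proof essentially step for step: both chain Theorem \ref{pick_branch} (or Remark \ref{rem:UC_Case} in the unconstrained case) for proper ellipticity and the branch condition, Theorem \ref{UCbranch} for continuity of $\Theta$ via \eqref{TCP5}, Theorem \ref{thm:SHCVS} with \eqref{TCP7} for the correspondence principle, and finally Theorem \ref{thm:CP} for comparison. If anything, you are slightly more explicit than the paper in citing Theorem \ref{UCbranch} for the continuity step.
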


\begin{proof} In the constrained case \eqref{TCP1}, the map $\Theta$ defined by \eqref{TCP3} is a proper elliptic map and defines a proper elliptic branch of \eqref{TCP10} by applying Theorem \ref{pick_branch}, where one uses \eqref{TCP4}, \eqref{TCP6} and \eqref{TCP8}. The non-degeneracy condition \eqref{TCP7} then yields the correspondence principle of Theorem \ref{thm:SHCVS}. Hence the comparison principle \eqref{TCP11} follows from the potential theoretic version of comparison (Theorem \ref{thm:CP}). 
	
	In the unconstrained case \eqref{TCP2}, the map $\Theta$ defined by \eqref{TCP3} is proper elliptic, as discussed in Remark \ref{rem:UC_Case}, where one uses \eqref{TCP4}, \eqref{TCP6} with $\Phi(x) = \cJ$ for each $x \in \Omega$ and the properness condition \eqref{TCP9}. Using the non-degeneracy condition \eqref{TCP7}, which in this case means \eqref{Theta_ND}, one has the correspondence principle between $\Theta$ subharmonics/superharmonics and standard viscosity subsolutions/supersolutions of the PDE \eqref{TCP10} (as noted in Remark \ref{rem:UC_Case}). Hence, again, comparison for the PDE \eqref{TCP10} reduces to the validity of Theorem \ref{thm:CP}.
\end{proof}

\subsection{Comparison in the constrained case.}

We now focus our attention on specific examples. We consider first the
validity of the comparison principle for an interesting prototype equation that is defined by an operator that is proper elliptic only when constrained to certain proper subsets of $\R \times
\cS(N)$. We will consider the equation 
\begin{equation}\label{cy}
[-u(x)]^{N+2} \det D^2 u(x) = h(x), \qquad x \in \Omega, \\
\end{equation}
where
\begin{equation}\label{Lphiass}
\mbox{$h \in C(\Omega)$ and $h \ge 0$ on $\Omega$.}
\end{equation}
When $h$ is positive constant, this kind of Monge-Amp\`ere equation is important in the question of the completeness of the affine metric of hyperbolic affine spheres as treated by Cheng and Yau in \cite{CY86}. In particular, for any negative constant $L$, a necessary and sufficient condition for the graph of $v$ to
be a hyperbolic affine sphere with {\em affine mean curvature} $L$ and center at the origin is that its Legendre transform $u=v^*$ satisfies (see section 5 of \cite{CY86}):
$$
\det D^2 u(x) = \left[ \frac{L}{u(x)} \right]^{N+2}, \qquad x \in \Omega, 
$$
which is equivalent to \eqref{cy} for $u > 0$ with $L = -h <0$. Here, we consider the case of $h$ being a function of
the $x$ variable, possibly vanishing on $\Omega$. Clearly,
\[
F(x, r, A) := (-r)^{N+2} \det A - h(x)
\]
fails to satisfy proper ellipticity conditions on the whole $\R \times
\cS(N)$. Still, $F$ restricted to $\cQ = \cN \times \cP$ satisfies
\eqref{TCP4}. We can prove the following comparison result.

\begin{thm}\label{thm:HASE} Suppose that $h$ satisfies \eqref{Lphiass}. Then, the map
	$\Theta: \Omega \to \wp(\R \times \cS(N))$ defined by
	\[
	\Theta(x) := \{ (r,A) \in \cQ :  \  (-r)^{N+2} \det A \geq h(x) \}
	\]
	is a continuous proper elliptic map and defines a proper elliptic
	branch of \eqref{cy}. Moreover, for any $\cQ$-admissible viscosity
	subsolution $u$ and any $\cQ$-admissible viscosity supersolution $v$ of
	\eqref{cy} (in the sense of Definition \ref{Vs_def}),
	\[
	\mbox{$u \leq v$ on $\partial \Omega \ \ \Rightarrow \ \ u \leq v$ in
		$\Omega$.}
	\]
	
\end{thm}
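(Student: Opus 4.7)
The plan is to recognize that Theorem \ref{thm:HASE} is a direct application of the general PDE comparison principle (Theorem \ref{thm:CPpde}) in the constrained case with the constant proper elliptic constraint $\Phi(x) \equiv \cQ$. So I would systematically verify each hypothesis of that theorem for the operator $F(x,r,A) = (-r)^{N+2}\det A - h(x)$ and the constraint $\Phi \equiv \cQ$, then invoke the theorem.

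First, I would dispose of the easy verifications. The constant map $\Phi \equiv \cQ$ is trivially continuous and proper elliptic. Properness of $F$ on $\cQ$ (condition \eqref{TCP4}) is immediate: if $(r,A) \in \cQ$ and $(s,P) \in \cQ$, then $-(r+s) \geq -r \geq 0$ and $A+P \geq A \geq 0$, so $(-(r+s))^{N+2}\det(A+P) \geq (-r)^{N+2}\det A$. The nonemptiness \eqref{TCP6} is clear by taking $A = I$, $r = -h(x)^{1/(N+2)}$. The branch condition \eqref{TCP8} holds because $\partial \cQ = (\{0\}\times \cP) \cup (\cN \times \partial \cP)$ forces either $-r = 0$ or $\det A = 0$, so $F(x,r,A) = -h(x) \leq 0$. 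Non-degeneracy \eqref{TCP7} is also straightforward: on $[\Theta(x)]^\circ$, if $h(x) > 0$ one must have $r < 0$ and $A > 0$ with $(-r)^{N+2}\det A > h(x)$, so $F > 0$; if $h(x) = 0$ then $[\Theta(x)]^\circ = \cQ^\circ$ and $F(x,r,A) = (-r)^{N+2}\det A > 0$ there.

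The main technical point is the regularity condition \eqref{TCP5}. For this I would establish the key algebraic inequality
\begin{equation*}
(-r+\eta)^{N+2}\det(A+\eta I) - (-r)^{N+2}\det A \ \geq \ \eta^{2N+2}, \qquad \forall (r,A) \in \cQ, \ \eta > 0.
\end{equation*}
Setting $t := -r \geq 0$ and using the telescoping decomposition
\begin{equation*}
(t+\eta)^{N+2}\det(A+\eta I) - t^{N+2}\det A = \bigl[(t+\eta)^{N+2} - t^{N+2}\bigr]\det(A + \eta I) + t^{N+2}\bigl[\det(A+\eta I) - \det A\bigr],
\end{equation*}
both summands are nonnegative, and the first one is bounded below by $\eta^{N+2} \cdot \eta^{N} = \eta^{2N+2}$ since $\det(A+\eta I) \geq \eta^N$ and $(t+\eta)^{N+2} - t^{N+2} \geq \eta^{N+2}$ for $t \geq 0$. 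With this in hand, given $\Omega' \subset\subset \Omega$ and $\eta > 0$, use the uniform continuity of $h$ on $\overline{\Omega'}$ to pick $\delta > 0$ such that $|h(y) - h(x)| \leq \eta^{2N+2}$ whenever $x,y \in \Omega'$ with $|x-y| < \delta$. Then for every $(r,A) \in \cQ$,
\begin{equation*}
F(y, r-\eta, A+\eta I) - F(x,r,A) = \bigl[(-r+\eta)^{N+2}\det(A+\eta I) - (-r)^{N+2}\det A\bigr] - \bigl[h(y) - h(x)\bigr] \geq \eta^{2N+2} - \eta^{2N+2} = 0,
\end{equation*}
which is precisely \eqref{TCP5}.

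With all hypotheses of Theorem \ref{thm:CPpde} verified, I would conclude that $\Theta$ is a continuous proper elliptic map defining a proper elliptic branch of \eqref{cy}, and comparison for $\cQ$-admissible viscosity sub/supersolutions follows immediately. The most delicate step is the algebraic lower bound $\eta^{2N+2}$: naively one worries about degeneracy when $r = 0$ or $A$ is singular (so $F$ loses strict monotonicity), but the perturbation $(r-\eta, A+\eta I)$ simultaneously enlarges both $-r$ and every eigenvalue of $A$, and the product structure of $(-r)^{N+2}\det A$ yields a quantitatively controlled gain even at the worst-case point $(r,A) = (0,0)$. Once this inequality is seen, everything else is routine.
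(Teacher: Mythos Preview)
Your proposal is correct and follows essentially the same approach as the paper: both verify all hypotheses of Theorem~\ref{thm:CPpde} for $\Phi\equiv\cQ$ and $F(x,r,A)=(-r)^{N+2}\det A-h(x)$, with the crux being the uniform lower bound $(-r+\eta)^{N+2}\det(A+\eta I)-(-r)^{N+2}\det A\geq\eta^{2N+2}$ for $(r,A)\in\cQ$. The only cosmetic differences are that the paper reaches this bound via $\det(A+\eta I)\geq\det A+\eta^N$ rather than your telescoping split, and it handles non-degeneracy \eqref{TCP7} by the same perturbation trick used for regularity rather than your case split on $h(x)$; both routes are equally valid.
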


\begin{proof} To show that the comparison principle holds, it is
	sufficient to check all the assumptions of Theorem \ref{thm:CPpde} in
	the constrained case. First, $\cQ$ is $x$-independent, so it is clearly
	Hausdorff continuous. The monotonicity condition \eqref{TCP4} is easily verified:
	$F(x,r,A)$ is decreasing in $r$ and increasing in $A$ if $r \le 0$ and $ A \ge 0$; that is, if $(r,A) \in \cQ$. For all $x \in \Omega$, $(r,A) =
	\big(-(h(x))^{\frac1{N+2}}, I\big) \in \cQ$ satisfies $F(x,r,A) = 0$,
	hence the non-empty condition \eqref{TCP6} holds. Moreover, since $\partial \cQ = \left( \{0\}\times
	\cP \right) \cup \left( \cN\times\partial \cP \right)$, $F(x, \cdot, \cdot) \le 0$ on $\partial
	\cQ$ for each $x$, we have the branch condition \eqref{TCP8}. To check the regularity condition \eqref{TCP5}, note
	that for $\eta > 0$,
	\begin{multline*}
	F(y, r - \eta, A + \eta I) = (-r + \eta)^{N+2} \det (A + \eta I) - h(y)
	\ge \\
	(-r + \eta)^{N+2} \det A + \eta^N (-r + \eta)^{N+2} - h(y) \ge F(x, r,
	A) + \eta^{2N+2} - h(y) + h(x),
	\end{multline*}
	for all $(r,A) \in \cQ$ and $y \in \Omega$. For any fixed $\Omega'
	\subset \subset \Omega$, by the uniform continuity of $h$ on
	$\Omega'$, it is sufficient to pick $|x-y| < \delta$ and
	$\delta = \delta(\eta, \Omega')>0$ small enough to have $\eta^{2N+2} -
	h(y) + h(x) \ge 0$. Similarly, the non-degeneracy condition \eqref{TCP7}
	is satisfied. Indeed, for any $(r,A) \in [\Theta(x)]^{\circ}$, $(r+
	\eta,A -\eta I ) \in \Theta(x)$ for small $\eta > 0$. As before,
	\[
	F(x, r, A) = F(x, r + \eta - \eta, A - \eta I + \eta I) \ge F(x, r +
	\eta, A - \eta I) + \eta^{2N+2} > 0,
	\]
	The comparison principle thus follows from 
	Theorem \ref{thm:CPpde}.
	
\end{proof}

\begin{rem} Note that $\Theta(x) = \{F(x, \cdot, \cdot) \ge 0\} \cap \left( \cN
	\times \cS(N) \right) \cap \left( \R \times \cP \right)$, in other words
	\[
	(r, A) \in \Theta(x) \qquad \Leftrightarrow \qquad \min\{F(x, r, A), -r,
	\lambda_1(A)\} \ge 0.
	\]
Moreover, it can be easily checked that $\cQ$-admissible
	viscosity subsolutions and supersolutions of \eqref{cy} are equivalent to
	{\it standard} viscosity subsolutions and supersolutions (i.e. with no
	additional restrictions on the upper and lower test functions $\varphi$) of
	
	\[
	\min\{F(x, u(x), D^2u(x)), \ -u(x), \ \lambda_1(D^2u(x))\} = 0,
	\]
	which can be seen as a Bellman equation, or an obstacle problem for the
	fully non-linear equation \eqref{cy}. Indeed, classical solutions to
	$\min\{F(x, u, D^2u), -u, \lambda_1(D^2u)\} = 0$ are actually (convex)
	solutions to $\min\{F(x, u, D^2u), -u\} = 0$.
	
\end{rem}

Of course, the equation \eqref{cy} is just a prototype of equations for which the product structure is amenable to our methods. For example, one can obtain comparison principles for equations of the form $g(x, u) F(x, D^2 u) = h(x)$, assuming that
 $g(x, \cdot)$ is decreasing and and that $F(x, \cdot)$ is increasing, with
some strictness in at least one of the two variables, to guarantee the
validity of \eqref{UCF}. General examples of such $F$ can be found in
\cite[Section 5]{CP17}. In particular, our methods naturally cover more general
equations of the form
\begin{equation}\label{GdetL}
g(m(x) - u(x)) \det (D^2 u(x) + M(x)) = h(x),
\end{equation}
where $g,m,M$ are continuous functions, and $g(\cdot)$ is increasing
and positive on some open interval $(r_0, \infty)$. We stress that mere
continuity with respect to $x$ for $g, m, M$ is sufficient here, while
the application of general arguments in \cite{HL18} involving
jet-equivalence may require further regularity properties of data, as
Lipschitz continuity. See Definition 2.9 of \cite{HL18}. Note also that the equation \eqref{GdetL} is a
generalized version of an example discussed in \cite[Remark
5.10]{CP17}, where it is pointed out that condition \eqref{UCF} allows one to treat some cases in which the standard Crandall-Ishii-Lions
condition (see \cite[Condition (3.14)]{CIL92}) does not hold.

\begin{thm}\label{thm:CCCP} Suppose that $h \in C(\Omega)$ is non-negative, $m \in C(\Omega)$, $M \in C(\Omega; \cS(N))$ and $g \in C(\R)$ satisfies
	\begin{equation}\label{g_assumptions}
	\mbox{$g(\cdot)$ is increasing, $g(r_0) = 0$ and $g > 0$ on $(r_0, \infty)$ for some $r_0 \in \R$.}
	\end{equation} Then, the map $\Theta:
	\Omega \to \wp(\R \times \cS(N))$ defined by
	\begin{equation}\label{define_Theta}
	\Theta(x) := \{ (r,A) \in \Phi(x):  \  g(m(x) - r) \det (A + M(x))
	\geq h(x) \}
	\end{equation}
	where
	\begin{equation}\label{define_Phi}
	\Phi(x)  := \{(r,A) \in \R \times \cS(N): \ g(x) - r \ge r_0
	\text{ and } A +M(x) \ge 0\}
	\end{equation}
is a continuous proper elliptic map and defines a proper elliptic
branch of \eqref{GdetL}. Moreover, for any $\Phi$-admissible viscosity
subsolution $u$ and any $\Phi$-admissible viscosity supersolution $v$ of
\eqref{GdetL} (in the sense of Definition \ref{Vs_def}),
\[
\mbox{$u \leq v$ on $\partial \Omega \ \ \Rightarrow \ \ u \leq v$ in
	$\Omega$.}
\]
\end{thm}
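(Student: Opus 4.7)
The plan is to deduce Theorem \ref{thm:CCCP} directly from Theorem \ref{thm:CPpde} in the constrained case, by verifying each of its hypotheses for the gradient-free operator $F(x,r,A) := g(m(x) - r) \det(A + M(x)) - h(x)$ and the constraint map $\Phi$ defined in \eqref{define_Phi}. Several items can be dispatched quickly. The set $\Phi(x) = \{(r,A) : m(x) - r \ge r_0 \text{ and } A + M(x) \ge 0\}$ is closed, non-empty, proper, and $\cQ$-monotone, so $\Phi$ is a proper elliptic map, and its (locally uniform) continuity in the formulation \eqref{uusc2} follows from the uniform continuity of $m$ and $M$ on compact subsets of $\Omega$: for $|x-y|<\delta$ small enough that $|m(y) - m(x)| < \eta/2$ and $M(y) - M(x) \ge -(\eta/2)I$, any $(r,A)\in\Phi(x)$ yields $m(y) - (r-\eta) \ge r_0 + \eta/2 > r_0$ and $A + \eta I + M(y) \ge A + M(x) + (\eta/2)I \ge 0$. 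The non-empty condition \eqref{TCP6} holds because, for each $x$, fixing any $r < m(x) - r_0$ gives $g(m(x)-r) > 0$, and then $A := tI - M(x)$ with $t^N = h(x)/g(m(x)-r)$ lands in $\Phi(x) \cap \Gamma(x)$. The branch condition \eqref{TCP8} is also immediate: on $\partial \Phi(x)$ either $m(x) - r = r_0$ (so $g(m(x)-r)=0$) or $\det(A + M(x)) = 0$, and in both cases $F(x,r,A) = -h(x) \le 0$.

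For the proper ellipticity \eqref{TCP4}, on $\Phi(x)$ both factors $g(m(x)-r)$ and $\det(A+M(x))$ are non-negative, and each is individually monotone under addition of $(s,P) \in \cQ$: monotonicity of $g$ handles the scalar factor, while $B \ge C \ge 0$ with $B,C$ symmetric implies $\det B \ge \det C$ by eigenvalue monotonicity. For the non-degeneracy condition \eqref{TCP7}, pick $(r,A) \in [\Theta(x)]^{\circ} \subset [\Phi(x)]^{\circ}$; then $m(x)-r > r_0$ (so $g(m(x)-r) > 0$ by \eqref{g_assumptions}) and $A + M(x)$ is strictly positive definite (so $\det(A+M(x)) > 0$). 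If $F(x,r,A)=0$, choosing $\epsilon > 0$ small enough to keep $(r, A - \epsilon I) \in [\Phi(x)]^{\circ}$ would give $\det(A - \epsilon I + M(x)) < \det(A+M(x))$, hence $F(x, r, A - \epsilon I) < 0$, contradicting $(r, A - \epsilon I) \in \Theta(x)$.

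The principal obstacle is the regularity condition \eqref{TCP5}, in which the gain from the perturbation $(-\eta, \eta I)$ must dominate the oscillation of $h$ uniformly over $\Phi(x)$, even when $g$ or $\det$ degenerates to $0$. Fix $\Omega' \subset\subset \Omega$ and $\eta > 0$, and for $(r,A) \in \Phi(x)$ set $\alpha := m(x) - r \ge r_0$ and $B := A + M(x) \ge 0$; with $\tilde\alpha := \alpha + \eta + (m(y) - m(x))$ and $\tilde B := B + \eta I + (M(y) - M(x))$, one writes
\[
F(y, r - \eta, A + \eta I) - F(x,r,A) = g(\tilde\alpha)\det(\tilde B) - g(\alpha)\det(B) + h(x) - h(y).
\]
By uniform continuity of $m$ and $M$ on $\overline{\Omega'}$, for $|x-y|<\delta$ small one has $\tilde\alpha \ge \alpha + \eta/2$ and $\tilde B \ge B + (\eta/2)I \ge 0$. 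Monotonicity of $g$ then gives $g(\tilde\alpha) \ge g(\alpha + \eta/2) \ge g(r_0 + \eta/2) =: c_\eta > 0$ uniformly in $(r,A)$, and the eigenvalue expansion
\[
\det\bigl(B + (\eta/2)I\bigr) = \prod_{i=1}^{N}\bigl(\lambda_i(B) + \eta/2\bigr) \ge \det(B) + (\eta/2)^N, \qquad B \ge 0,
\]
supplies the crucial strict gain independent of $B$. Combining these yields $g(\tilde\alpha)\det(\tilde B) \ge g(\alpha)\det(B) + c_\eta (\eta/2)^N$, and further shrinking $\delta$ so that $|h(x) - h(y)| < c_\eta (\eta/2)^N$ (by uniform continuity of $h$ on $\overline{\Omega'}$, with $\delta$ depending only on $\eta$ and $\Omega'$) produces \eqref{TCP5}. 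All the hypotheses of Theorem \ref{thm:CPpde} being verified, the comparison principle in the conclusion follows.
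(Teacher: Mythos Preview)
Your proof is correct and follows essentially the same approach as the paper: both apply Theorem \ref{thm:CPpde} in the constrained case and verify the hypotheses \eqref{TCP1}, \eqref{TCP4}--\eqref{TCP8} in the same manner, with the key step for \eqref{TCP5} being the uniform gain $\det(B+(\eta/2)I)\ge\det(B)+(\eta/2)^N$ on $\cP$ combined with the lower bound $g(\tilde\alpha)\ge g(r_0+\eta/2)>0$. The only cosmetic difference is that you argue \eqref{TCP7} by perturbing $A\mapsto A-\epsilon I$ inside $[\Theta(x)]^{\circ}$, whereas the paper perturbs via $(r+\eta,A-\eta I)$ and reuses the regularity computation; both yield the same contradiction.
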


\begin{proof} We apply Theorem \ref{thm:CPpde} in the constrained case, where one needs to check that the needed conditions hold. First, one easily checks that $\Phi$ defined by \eqref{define_Phi} a proper elliptic map; that is, each $\Phi(x)$ is a non-empty, closed proper subset of $\R \times \cS(N)$ which is $\cQ$-monotone, where we note that
		\begin{equation}\label{define_Phi2}
	\Phi(x)  := \{(r,A) \in \R \times \cS(N): \  r \leq m(x) - r_0
	\text{ and } A  \geq -M(x) \}
	\end{equation}
	so that $(r,A) \in \Phi(x)$ yields $(r + s, A + P) \in \Phi(x)$ for each $s \leq 0$ and $P \geq 0$. The proper elliptic map $\Phi$ is continuous, as one sees by using the local uniform continuity of $m$ and $M$ to show the local uniform continuity of $\Phi$ in the sense of \eqref{uusc2} in the characterization of Proposition \ref{UCHD}.
	Hence \eqref{TCP1} holds. 
	
	Next, the operator $F$ defined by
	\begin{equation}\label{define_F}
	F(x,r,A):= g(m(x) - r) \det (A + M(x)) - h(x)
	\end{equation}
	is $\cQ$-monotone in the sense \eqref{TCP4} since $(r,A) \in \Phi(x)$ and $(s,P) \in \cQ = \cN \times \cP$ yields
	$$
	m(x) - (r+s) \geq m(x) - r \geq r_0 \ \Rightarrow \ g(m(x) - (r+s)) \geq g(m(x) -r) \geq 0
	$$
	as $g$ is increasing and non-negative on $[r_0, +\infty)$ by \eqref{g_assumptions}, while
	$$
	A + P + M(x) \geq A + M(x) \geq 0 \ \Rightarrow \ \det(A + P + M(x)) \geq \det(A + M(x)) \geq 0.
	$$ 	
	
	The non-empty condition \eqref{TCP6} holds since for each $x \in \Omega$, the element
	$$(r_x,A_x) :=	\left(m(x) - r_0 - 1,  \left(\frac{h(x)}{m(r_0 + 1)} \right)^{1/N} I -	M(x) \right) \in \Phi(x)$$ 
	gives 
	$$
	F(x,r_x,A_x) := g(m(x) - r_x) \det (A_x + M(x)) - h(x) = m(r_0 + 1) \frac{h(x)}{m(r_0 + 1)} - h(x) = 0,
	$$ 
	where $m(r_0 + 1) > 0$ in view of the positivity assumption in \eqref{g_assumptions}.
	
	For the branch condition \eqref{TCP8}, since $\Phi(x) = (\cN + m(x)-r_0 ) \times (\cP + M(x))$, one has
	$$
	\partial \Phi(x) = (m(x)-r_0) \times (\cP - M(x)) \ \cup \ (\cN +
	g(x)-r_0 ) \times (\partial \cP + M(x)),
	$$
	and hence $F(x, \cdot, \cdot) \leq 0$ on $\partial \Phi(x)$ for each $x$.

	For the regularity property \eqref{TCP5}; that is, for $\Omega' \subset \subset \Omega$ and $\eta > 0$ arbitrary, there exists $\delta= \delta(\eta, \Omega') > 0$ such that
	\begin{equation}\label{TCP5_recall}
	F(y, r - \eta, A + \eta I) \geq F(x, r, A) \quad \forall (r, A) \in \Phi(x), \forall x,y \in \Omega' \ {\rm with \ } |x - y| < \delta,
	\end{equation}
	one makes use of the local uniform continuity of $m,M$ and $g$ together with the monotonicity of $g$ and $\det$ on $\Phi(x)$. Indeed,
	pick  $\delta = \delta(\eta, \Omega')>0$ so that for each $x,y \in \Omega'$ with $|x-y| < \delta$ one has:
	\begin{equation}\label{delta_choices}
	m(y) - m(x) + \frac{\eta}{2} \geq 0, \ M(y) - M(x) + \frac \eta 2 \geq 0, \ 
	h(x) - h(y) + g(r_0+\eta/2)\left(\frac\eta2\right)^N \geq 0,
	\end{equation} 
where $g(r_0 + \eta/2) > 0$ by \eqref{g_assumptions}. Since $g$ is increasing on $[r_0, +\infty)$, the first condition in \eqref{delta_choices} together with $m(x) - r \geq r_0$ if $(r,A) \in \Phi(x)$ yields
\begin{equation}\label{est1}
g(m(y) - r + \eta) \geq g(m(x) - r + \eta/2) \geq g(r_0 + \eta/2) > 0.
\end{equation}
Similarly, since $\det$ is increasing on $\cP$, the second condition in \eqref{delta_choices} together with $A + M(x) \in \cP$ for $(r,A) \in \Phi(x)$ yields
\begin{equation}\label{est2}
\det(A + \eta I + M(y)) \geq \det(A + (\eta/2) I + M(x)) \geq \det(A + M(x)) \geq 0.
\end{equation}
Using \eqref{est1}, \eqref{est2} and the inequality $\det(A + B) \geq \det(A) = \det(B)$ for $A, B \geq 0$, one finds
\begin{multline*}
	F(y, r - \eta, A + \eta I) = g(m(y) - r + \eta)\det(A + \eta I + M(y)) - h(y) \geq\\
	g(m(y) - r + \eta) \left[ \det\left(A + \frac\eta2 I + M(y)\right) +   \left(\frac\eta2\right)^N \right] - h(y) \geq \\
	g(m(x) - r + \eta/2)\det\left(A + M(x) \right) + g(r_0 + \eta/2) \left(\frac\eta2\right)^N  - h(y)
	\\ \geq g(m(x) - r )\det\left(A + M(x) \right) - h(x) = F(x, r, A),
	\end{multline*}
	where we have also used the third condition in \eqref{delta_choices} in the last inequality.
	
	Finally, the non-degeneracy condition \eqref{TCP7} follows from the structure of $F(x, \cdot, \cdot)$ on $\Theta(x)$. Indeed, if $(r,A) \in [\Theta(x)]^{\circ}$, one must have both
	\begin{equation}\label{r_condition}
	m(x) - r > r_0 \ \text{and hence} \ g(m(x) - r) > 0 \ \text{by} \ \eqref{g_assumptions}
	\end{equation}
	and 
	\begin{equation}\label{A_condition}
A + M(x) \in \cP^{\circ} \ \text{and hence} \ \det(A + M(x)) > 0. 
	\end{equation}
	Consequently, in the equation
	\begin{equation}\label{F=0}
	F(x,r,A) = g(m(x) - r) \det(A + M(x)) - h(x) = 0, 
	\end{equation}
	if $(r,A) \in [\Theta(x)]^{\circ}$, then one must have $h(x) > 0$. however, in this case by the positivity in \eqref{r_condition} and the strict monotonicity of $\det$ on $\cP^{\circ}$ one cannot preserve $F(x, \cdot, \cdot) \geq 0$ in a neighborhood of $(r,A) \in [\Theta(x)]^{\circ}$.

\end{proof}

\subsection{Comparison in the unconstrained case.} 

As a final illustration of our method, we will prove a new comparison principle for the inhomogeneous {\em special
Lagrangian potential equation} 
\begin{equation}\label{sleq}
\sum_{i = 1}^N \arctan \big( \lambda_i (D^2 u(x)) \big) =
h(x).
\end{equation}
where
\begin{equation}\label{evals}
\lambda_1(A) \leq \lambda_2(A) \leq \cdots \leq \lambda_N(A)
\end{equation}
are the ordered eigenvalues of $A \in \cS(N)$ and $h \in C(\Omega)$. As noted in the introduction, while this equation is proper elliptic on all of $\R \times \cS(N)$, its treatment is delicate due to the degeneracies when the operator
$G :\cS(N) \to \cI = (-N\pi/2, N\pi/2)$ defined by
\begin{equation}\label{slpo}
G(A) := \sum_{i = 1}^N \arctan \big( \lambda_i (A) \big), \ \ A \in S(N)
\end{equation}
takes on one of the {\em special phase values}
\begin{equation}\label{SPV6}
\theta_k:= (N - 2k)\pi/2 \ \ \text{for} \  k = 1, \ldots N - 1,
\end{equation}
which determine the {\em phase intervals}
\begin{equation}\label{I_k6}
\cI_k:= \left( (N - 2k) \frac{\pi}{2}, (N - 2(k - 1)) \frac{\pi}{2} \right) \ \ \text{with} \ \ k = 1, \ldots N.
\end{equation}

As discussed in the introduction, we will make a contribution to the following Open Question (page 23 of \cite{HL19}): {\em does the comparison principle hold for each continuous phase function $h$ taking values in $\cI$?} For $h$ taking values in the top phase interval $\cI_1$, this is known (see \cite{DDT19} or \cite{HL19}). We will show that the comparison principle holds if $h$ takes values in any one of the phase intervals $\cI_k$ with $k \in \{ 1, \ldots, N\}$. We will also show that our method breaks down if $h$ takes on one of the special phase values $\theta_k$.

We begin by embedding the PDE \eqref{sleq} into its natural potential theoretic framework. For the pure second order operator \eqref{slpo}
consider the map $\Theta: \Omega \to \wp(\R \times \cS(N))$ defined by 
\begin{equation}\label{thetasl}
\Theta(x) := \{ (r,A) \in \R \times \cS(N) :  \  G(A) \ge h(x) \}.
\end{equation}
One easily checks that $\Theta$ is a proper elliptic map and defines a proper elliptic branch of \eqref{sleq}. Indeed, each $\Theta(x)$ is closed by the continuity of $G$ and $h$, $\Theta(x)$ is non-empty since
$$
	(r_x,A_x) \in \Theta(x) \ \ \text{for each} \ r_x \in \R \ \text{and} \ A_x = \tan {\left( \frac{h(x)}{N} \right)} I,
$$
and $\Theta(x) \subsetneq \R \times \cS(N)$ since one easily finds $(r,A)$ such that $G(A) - h(x) < 0$ by using the monotonicity of $F$ on all of $\cS(N)$. The operator $F(x,r,A):= G(A) - h(x)$ is clearly proper elliptic on $\R \times \cS(N)$ so that $\Theta(x)$ is $\cQ$-monotone. Finally, $\Theta$ defines a branch of \eqref{sleq} since
\begin{equation}\label{sleq_branch}
	\partial \Theta(x) = \{ (r,A) \in \R \times \cS(N): \ G(A) = h(x)\}.
\end{equation}

Having embedded the PDE problem into its natural potential theoretic framework, the main point is to verify the continuity of the map $\Theta$. This is delicate since the operator $G$ degenerates on level sets $\{ A \in \cS(N): \ G(A) = \theta_k \}$ for each $k = 1, \ldots, N -1$. We will show that $\Theta$ is continuous if $h$ avoids the special phase values $\theta_k$ and hence the comparison principle follows from Theorem \ref{thm:CP}. The key idea in the proof is that if $h \in C(\Omega)$ avoids each special phase value, then $h$ maps compact subsets of $\Omega$ into a compact subset $\Sigma$ of some open phase interval $\cI_k$, which yields a locally uniform bound on at least one eigenvalue of $A \in G^{-1}(\Sigma)$ (see Lemma \ref{lem:slpe} below). It is perhaps instructive to first show how continuity fails nearby a point $x_0$ where $h$ (non constant) does take on a special phase value, and hence our method breaks down.

\begin{prop}[Failure of continuity]\label{prop:failure} Let $h \in C(\Omega)$ and suppose that there exists a convergent sequence $\{x_n\}_{n \in \N} \subset \Omega$ with limit $x_0 \in \Omega$ such that for some $k = 1, \ldots , N -1$
\begin{equation}\label{failure1}
h(x_0) = \theta_k := (N -2k) \frac{\pi}{2}
\end{equation}
and	
\begin{equation}\label{failure2}
\text{either} \ h(x_n) > \theta_k \ \text{for every} \ n \in \N \ \ \text{or} \ \ h(x_n) < \theta_k \ \text{for every} \ n \in \N .
\end{equation}
Then the proper elliptic map $\Theta$ is not continuous.
\end{prop}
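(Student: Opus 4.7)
The plan is to disprove the inclusion property of Proposition~\ref{UCHD}(c) on any compact neighborhood of $x_0$: I fix $\eta_0 > 0$ and construct, for each sufficiently large $n$, matrices witnessing the failure of $\Theta(x_0) + (-\eta_0, \eta_0 I) \subset \Theta(x_n)$ in case~1 (where $h(x_n) > \theta_k$) and of $\Theta(x_n) + (-\eta_0, \eta_0 I) \subset \Theta(x_0)$ in case~2 (where $h(x_n) < \theta_k$). Since $x_n \to x_0$ and $\delta_n := |h(x_n) - \theta_k| \to 0^+$ by continuity of $h$, this contradicts Hausdorff continuity of $\Theta$.

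The geometric fact driving the argument is that the level set $G^{-1}(\theta_k)$ at a special phase extends to infinity along directions where $k$ eigenvalues diverge to $-\infty$ and $N-k$ diverge to $+\infty$: each $\arctan(\lambda_i)$ saturates to $\pm\pi/2$ and their sum stays at $(N-2k)\pi/2 = \theta_k$. Along any such direction the partial derivatives $\partial G/\partial \lambda_i = (1+\lambda_i^2)^{-1}$ all vanish, so $G$ becomes asymptotically insensitive to translation by $\eta_0 I$. It suffices to treat $k < N/2$, since the cases $k \ge N/2$ reduce to this via the duality $A \mapsto -A$, $G(-A) = -G(A)$, $\theta_{N-k} = -\theta_k$, which exchanges cases~1 and~2.

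For case~1, I take the diagonal matrix $A_t$ with $N-k$ entries equal to $t$ and $k$ entries equal to $-b_t$, where $b_t > 0$ is defined for $t$ large by $\arctan(1/b_t) = \tfrac{N-k}{k}\arctan(1/t)$, so that $G(A_t) = \theta_k$ exactly; in particular $b_t \sim \tfrac{k}{N-k}\,t$. Using $\arctan(x) = \pi/2 - 1/x + O(1/x^3)$ together with the identity $(N-k)/t = k/b_t + O(1/t^3)$ implied by the defining relation for $b_t$, a routine expansion produces the second-order cancellation
\[
0 \le G(A_t + \eta_0 I) - \theta_k = \eta_0\!\left(\frac{N-k}{t^2} + \frac{k}{b_t^2}\right) + O(1/t^3) = O(\eta_0/t^2).
\]
Choosing $t = t_n$ with $G(A_{t_n} + \eta_0 I) - \theta_k < \delta_n$ (possible for $t_n \gtrsim 1/\sqrt{\delta_n}$) yields $(0, A_{t_n}) \in \Theta(x_0)$ while $(-\eta_0, A_{t_n} + \eta_0 I) \notin \Theta(x_n)$. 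For case~2, I instead take $A_t$ diagonal with entries $+t$ ($N-k$ times) and $-t$ ($k$ times), so $G(A_t) = (N-2k)\arctan(t) \nearrow \theta_k$, and an analogous asymptotic gives $G(A_t + \eta_0 I) - \theta_k \sim (2k-N)/t$, which is negative for large $t$. Selecting $t_n$ so that simultaneously $G(A_{t_n}) \ge h(x_n)$ and $G(A_{t_n} + \eta_0 I) < \theta_k$ (both possible for $t_n \gtrsim 1/\delta_n$) yields $(0, A_{t_n}) \in \Theta(x_n)$ while $(-\eta_0, A_{t_n} + \eta_0 I) \notin \Theta(x_0)$.

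The main technical obstacle is the $O(1/t)$ cancellation in the case-1 expansion: without it one only obtains $G(A_t + \eta_0 I) - \theta_k = O(1/t)$, insufficient to beat $\delta_n$ uniformly in $n$. The cancellation is exactly what the defining relation for $b_t$ provides, and in the resulting $O(\eta_0/t^2)$ remainder the fixed parameter $\eta_0$ appears linearly, allowing $t_n$ to be chosen in terms of $\delta_n$ independently of $\eta_0$.
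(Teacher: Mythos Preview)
Your argument is correct and follows essentially the same route as the paper's: disprove the inclusion of Proposition~\ref{UCHD}(c) for a fixed $\eta$, using block-diagonal matrices on or near the level set $\{G=\theta_k\}$ whose eigenvalues run off to $\pm\infty$, so that the increment $G(A+\eta I)-G(A)$ becomes arbitrarily small. The differences are cosmetic: the paper treats all $k$ at once (no duality reduction to $k<N/2$) and, in place of your Taylor expansions, applies the mean value theorem directly to write $G(A_t+\eta I)-G(A_t)$ as a sum of terms $\eta/(1+\xi_j^2)=O(\eta/t^2)$. Your ``main technical obstacle'' of an $O(1/t)$ cancellation is an artifact of expanding $G(A_t+\eta_0 I)-\theta_k$ term by term rather than computing the increment $G(A_t+\eta_0 I)-G(A_t)$: the $O(1/t)$ terms you cancel are precisely the content of the constraint $G(A_t)=\theta_k$, and the mean value theorem route never produces them in the first place. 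Either way the final estimate is the same, and your choice of $t_n$ in terms of $\delta_n$ is fine (there is no need for $t_n$ to be independent of the fixed $\eta_0$).
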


\begin{proof}
	By Proposition \ref{proper_ell_map}(c) with $\Omega'$ any neighborhood of $x_0$ and $\eta = 1$, it suffices to show that either
	\begin{equation}\label{fail1}
		\Theta(x_0) + (-1, I) \not\subset \Theta(x_n) \ \ \text{for each} \ n \in \N.
		\end{equation}
		or 
	\begin{equation}\label{fail0}
	\Theta(x_n) + (-1, I) \not\subset \Theta(x_0) \ \ \text{for each} \ n \in \N.
	\end{equation}	
	We treat first the case $h(x_n) > \theta_k$.  By the definition of $\Theta$, in order to show \eqref{fail1} it suffices to exhibit a sequence $\{A_n\} \subset \cS(N)$ such that
	\begin{equation}\label{fail2}
	G(A_n) \geq h(x_0) = \theta_k \ \ \text{and} \ \ G(A_n + I) < h(x_n).
	\end{equation}
	One such sequence is provided by the block diagonal matrices
	\begin{equation}\label{fail3}
	A_n:= \left[ \begin{array}{cc} -a_n I_{k} & 0 \\ 0 &  b_n I_{N - k} \end{array} \right], \ \ \ n \in \N,
	\end{equation}
	where $I_k \in \cS(k)$ is the identity matrix and  $a_n, b_n > 0$ are to be chosen  suitably large, but are constrained to satisfy
		\begin{equation}\label{fail4}
	G(A_n) = (N - k) \arctan{(b_n)} - k \arctan{(a_n)} = \theta_k,
	\end{equation}
	so that the first condition in \eqref{fail2} holds. By making use of the mean value theorem one one finds $\xi_n \in (-a_n, -a_n + 1)$ and $\eta_n \in (b_n, b_n + 1)$ such that
	\begin{eqnarray*}
		G(A_n + I) & = &  (N - k) \arctan{(b_n + 1)} - k \arctan{(-a_n + 1)}  \\
				& = & (N - k)  \arctan{(b_n)} + \frac{1}{1 + \eta_n^2} +   k \arctan{(-a_n)} + \frac{1}{1 + \xi_n^2},  
		\end{eqnarray*}
which by \eqref{fail4} yields
\begin{equation}\label{fail5}
		G(A_n + I) =	\theta_k +  \frac{1}{1 + \eta_n^2} + \frac{1}{1 + \xi_n^2}.
		\end{equation}
If one sends $a_n$ to $+\infty$ then by \eqref{fail4} and the definition of $\theta_k$ one must have 
$$
\arctan{(b_n)} \to \frac{1}{N-k} \left( \theta_k + \frac{k \pi}{2} \right) = \frac{\pi}{2}
$$ 
and hence also $b_n$ goes to $+ \infty$. This also forces $\xi_n^2$ and $\eta_n^2$ to infinity. Hence by choosing $a_n$ large enough one can make the right hand side of \eqref{fail5} smaller than $h(x_n)$, where $h(x_n) > \theta_k$ by hypothesis.

Similarly, in the case $h(x_n) < \theta_k$, one can show that \eqref{fail0} holds by exhibiting a sequence $\{A_n\}_{n \in \N} \subset \cS(N)$ such that
	\begin{equation}\label{fail6}
G(A_n) \geq h(x_n)  \ \ \text{and} \ \ G(A_n + I) <  \theta_k = h(x_0).
\end{equation}
Again we take $A_n$ of the form \eqref{fail3} with the constraint \eqref{fail4} replaced by
\begin{equation}\label{fail7}
G(A_n) = (N - k) \arctan{(b_n)} - k \arctan{(a_n)} = h(x_n) < \theta_k,
\end{equation}
so that the first condition in \eqref{fail6} holds. By choosing $a_n$ and $b_n$ large enough, the same mean value argument used above gives
$$
G(A_n + I) = h(x_n) +  \frac{1}{1 + \eta_n^2} + \frac{1}{1 + \xi_n^2} < \theta_k
$$
so that the second condition in \eqref{fail6} holds, which completes the proof.
\end{proof}

On the other hand, if $h$ avoids the special phase values, then the proper elliptic map $\Theta$ is continuous.

\begin{thm}\label{thm:CSLPE} Suppose that $h \in C(\Omega)$ satisfies
	\begin{equation}\label{h_condition}
	h(\Omega) \subset \cI_k:= \left( (N - 2k) \frac{\pi}{2}, (N - 2(k - 1)) \frac{\pi}{2} \right) \ \ \text{for some fixed} \ k \in \{1, \ldots, N\}.
	\end{equation}
Then, the proper elliptic map
	$\Theta$ defined in \eqref{thetasl} is continuous, and defines a proper
	elliptic branch of \eqref{sleq}. Moreover, for any viscosity subsolution
	$u$ and any viscosity supersolution $v$ of \eqref{sleq},
	\[
	\mbox{$u \leq v$ on $\partial \Omega \ \ \Rightarrow \ \ u \leq v$ in
		$\Omega$.}
	\]
\end{thm}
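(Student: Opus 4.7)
The plan is to apply Theorem \ref{thm:CPpde} in the unconstrained case $\Phi \equiv \cJ$ (Remark \ref{rem:UC_Case}), where $\Phi$-admissibility reduces to the standard viscosity notion. The proper ellipticity of $F(x,r,A) := G(A) - h(x)$, the non-emptiness of $\Gamma(x) \cap \cJ$ and of $\{F(x,\cdot,\cdot)<0\}$, the properness of $\Theta(x)$, and the non-degeneracy \eqref{TCP7} (which follows from strict monotonicity of $\arctan$) have already been established in the paragraph preceding Proposition \ref{prop:failure}. Only the Hausdorff continuity of $\Theta$ remains.

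To establish continuity, I will use the sufficient condition of Remark \ref{rem:cont0}: for each $\Omega' \subset\subset \Omega$ and each $\eta > 0$, I must produce $\delta = \delta(\eta, \Omega') > 0$ such that for all $x, y \in \Omega'$ with $|x-y|<\delta$,
\[
G(A) \geq h(x) \ \Longrightarrow \ G(A + \eta I) \geq h(y).
\]
The key ingredient is the following eigenvalue bound, which I will state as Lemma \ref{lem:slpe}: for any compact $\Sigma \subset \cI_k$, there exists $C = C(\Sigma) > 0$ with $|\lambda_k(A)| \leq C$ whenever $G(A) \in \Sigma$. The proof is by contradiction along a sequence $\{A_n\}$ with $G(A_n) \in \Sigma$. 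If $\lambda_k(A_n) \to -\infty$, the ordering forces $\lambda_1(A_n), \ldots, \lambda_k(A_n) \to -\infty$, hence
\[
G(A_n) \leq k\arctan(\lambda_k(A_n)) + (N-k)\tfrac{\pi}{2} \longrightarrow (N-2k)\tfrac{\pi}{2},
\]
contradicting $\inf \Sigma > (N-2k)\pi/2$. The case $\lambda_k(A_n) \to +\infty$ is symmetric: $\lambda_k(A_n), \ldots, \lambda_N(A_n) \to +\infty$ together with $\sum_{i<k} \arctan(\lambda_i) > -(k-1)\pi/2$ yield $\liminf G(A_n) \geq (N-2k+2)\pi/2 > \sup \Sigma$.

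With the lemma in hand, set $\Sigma := h(\overline{\Omega'})$, which is compact in $\cI_k$ by \eqref{h_condition} and continuity of $h$ on the compact set $\overline{\Omega'} \subset \Omega$. Let $C = C(\Sigma)$ and set $\alpha := \eta/\bigl(1 + (C+\eta)^2 \bigr) > 0$, so that $\arctan(\mu + \eta) - \arctan(\mu) \geq \alpha$ whenever $|\mu| \leq C$. By uniform continuity of $h$ on $\overline{\Omega'}$, choose $\delta > 0$ so that $|x-y|<\delta$ implies $|h(x) - h(y)| < \alpha$. Fix such $x, y$ and any $A$ with $G(A) \geq h(x)$. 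If $G(A) > \sup \Sigma$, monotonicity of $G$ under $A \mapsto A + \eta I$ gives $G(A+\eta I) \geq G(A) > \sup \Sigma \geq h(y)$. Otherwise $G(A) \in [\inf \Sigma, \sup \Sigma] \subset \Sigma$, so $|\lambda_k(A)| \leq C$, and since every term $\arctan(\lambda_i(A) + \eta) - \arctan(\lambda_i(A))$ in the telescoping sum for $G(A + \eta I) - G(A)$ is non-negative,
\[
G(A + \eta I) \geq G(A) + \arctan(\lambda_k(A) + \eta) - \arctan(\lambda_k(A)) \geq h(x) + \alpha > h(y).
\]
This verifies Remark \ref{rem:cont0}, hence the continuity of $\Theta$, and comparison follows from Theorem \ref{thm:CP}.

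The main obstacle is the eigenvalue bound of Lemma \ref{lem:slpe}: it captures precisely why the special phase values must be avoided, since $\theta_k$ and $\theta_{k-1}$ are exactly the limit values of $G(A_n)$ as $\lambda_k(A_n)$ escapes to $\pm\infty$, the same mechanism underlying the counterexample of Proposition \ref{prop:failure}. Separating these values from the range of $h$ is what converts the strict (but non-uniform) monotonicity of each $\arctan$ summand into a quantitative, $A$-independent increment estimate for $G$.
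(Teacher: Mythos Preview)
Your argument is correct and follows essentially the same route as the paper: reduce to the continuity criterion of Remark \ref{rem:cont0}, split into the cases where $G(A)$ is large versus $G(A)$ lies in a compact subset of $\cI_k$, and in the latter case use a bounded eigenvalue together with the mean value theorem for $\arctan$. Your version of Lemma \ref{lem:slpe} is in fact sharper than the paper's: you show that the \emph{specific} eigenvalue $\lambda_k(A)$ (with $k$ the index of the phase interval) is bounded, whereas the paper only asserts that \emph{some} eigenvalue $\lambda_{j_A}(A)$ is bounded. Your proof of this refinement is clean and correct.

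One small repair: you set $\Sigma := h(\overline{\Omega'})$ and then write $G(A) \in [\inf\Sigma, \sup\Sigma] \subset \Sigma$, but the latter inclusion need not hold if $\overline{\Omega'}$ is disconnected. Simply take $\Sigma := [\min_{\overline{\Omega'}} h, \max_{\overline{\Omega'}} h]$ from the outset; this is still a compact subset of $\cI_k$ and the rest of your argument goes through verbatim.
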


\begin{proof} We have already discussed the claims that $\Theta$ is 
	proper elliptic and defines a proper elliptic branch of \eqref{sleq}.
	Moreover, the branch condition \eqref{sleq_branch} is precisely the
	needed non-degeneracy condition \eqref{Theta_ND} which ensures the
	correspondence principle between viscosity supersolutions and
	$\Theta$-superharmonic functions (as discussed in Remark \ref{rem:UC_Case}).
	
	Hence, in order to have the comparison principle for
	$\Theta$-sub/superharmonic functions, it suffices to verify that
	$\Theta$ is a continuous map (in order to apply the potential theoretic
	comparison Theorem \ref{thm:CP}). 
	
	To prove that $\Theta$ is indeed continuous, one can argue as follows.
	By Remark \ref{rem:LUC} and Proposition \ref{UCHD}(c), the proper elliptic
	map $\Theta$ will be continuous on $\Omega$ if for each $\Omega' \subset
	\subset \Omega$ and each $\eta > 0$ there exists $\delta(\eta, \Omega')
	> 0$ such that for each $x,y \in \Omega'$ with $|x-y| < \delta$
	\begin{equation}\label{LUC_recall1}
	(r, A) \in \Theta(x) \ \ \Rightarrow \ \ (r + \eta, A + \eta I) \in
	\Theta(y),
	\end{equation}
	which in terms of the special Lagrangian potential operator $G$ requires showing that for each $\eta > 0$ and each pair $x,y \in \Omega'$ with $|x-y| < \delta$ one has the implication
	\begin{equation}\label{LUC_recall2}
	G(A) - h(x) \geq 0 \ \ \Rightarrow \ \ G(A + \eta I) - h(y) \geq 0.
	\end{equation}
	Notice that the continuity of $h$ and condition \eqref{h_condition} shows that $h\big(\overline {\Omega'} \big)$ is a compact subset of the open set $\cI_k$ and hence  
	\begin{equation}\label{hss1}
	h\big(\overline {\Omega'} \big) \subset [\alpha, \beta] \subset (\theta_{k+1}, \theta_k) = \cI_k \ \  \text{for some} \ \alpha, \beta \in \cI_k.
	\end{equation}
Hence there exists $\veps > 0$ (depending on $\Omega'$ and $h$) such
\begin{equation}\label{hss2}
h\big(\overline {\Omega'} \big) + \gamma \subset [\alpha, \beta + \veps] \subset (\theta_{k+1}, \theta_k) = \cI_k \ \  \text{for all} \ \gamma \in [0, \veps].
\end{equation}
There are two cases to consider for the pair $(x,A)$ in \eqref{LUC_recall2}; namely,
	\begin{equation}\label{G_cases}
	G(A) \geq h(x) + \veps \ \ \text{and} \ \ h(x) \leq G(A) < h(x) +
	\veps.
	\end{equation}
	
	In the first case of \eqref{G_cases}, using the monotonicity of $G$
	and the uniform continuity of $h$ on $\overline{\Omega'}$, one has
	$$
	G(A + \eta I) - h(y) \ge G(A)  - h(y) \ge h(x) - h(y) +
	\varepsilon \geq 0
	$$
	for each $x,y \in \Omega'$ with $|x-y| \le \delta =
	\delta(\Omega')$, but independent of $\eta > 0$.
	
	For the second case of \eqref{G_cases}, we make use of the
	following fact. See Figure \ref{fig} which represents the case of dimension $N = 2$ for the interval $\cI_1 = (0, \pi)$.
	
	\begin{lem}\label{lem:slpe} For any $\Sigma$ compact in $\cI_k$ open, the set
		$G^{-1}(\Sigma) = \{A \in \cS(N): \ G(A) \in \Sigma\}$ satisfies the
		following property: there exists $C = C(\Sigma) > 0$ such that
	\begin{equation}\label{C_condition}
	\mbox{{\em if} $A \in G^{-1}(\Sigma)$,  \ {\em then}
		$|\lambda_j(A)| \le C$ \ {\em for
			some} $j= j_A \in \{ 1, \ldots, N \}$. }
	\end{equation}
	\end{lem}
	
	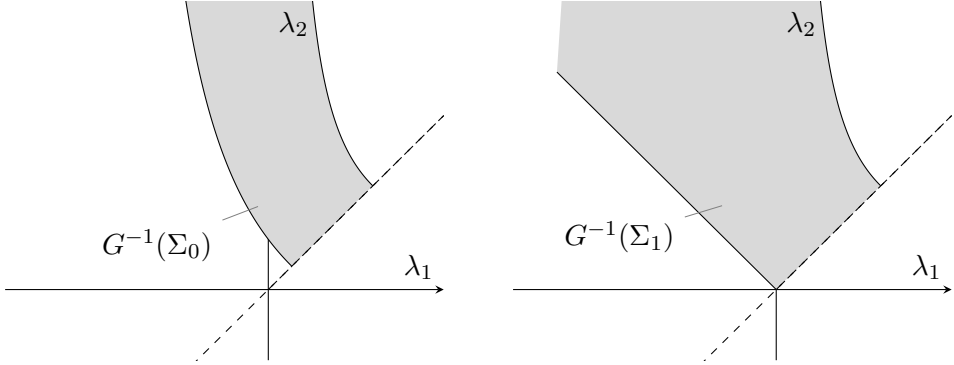
\begin{figure} 
		\centering
		\begin{tikzpicture}
		\begin{axis}[
		width=.5\textwidth, axis lines=middle, axis equal,
		xmin=-3,xmax=2,ymin=0,ymax=2.5,
		xtick=\empty, ytick=\empty,
		xlabel={$\lambda_1$},
		ylabel={$\lambda_2$}
		]
		\path[name path=topp] (axis cs:-3,2) -- (axis cs:3,2);
		\addplot[samples=300, domain=-1:tan(30/2), name path=A]
		{tan(30-atan(x))};
		\addplot[dashed, samples=300, domain=-1:2, name path=B] {x};
		\addplot[samples=300, domain=.2:tan(50), name path=C]
		{tan(100-atan(x))};
		\addplot[gray!30] fill between[of=A and C];
		\addplot[dashed, samples=300, domain=0:2, name path=B] {x};
		\node[pin=200:{$G^{-1}(\Sigma_0)$}] at (axis cs:0,1) {};
		\end{axis}
		\end{tikzpicture} \qquad
		\begin{tikzpicture}
		\begin{axis}[
		width=.5\textwidth,axis lines=middle, axis equal,
		xmin=-3,xmax=2,ymin=0,ymax=2.5,
		xtick=\empty, ytick=\empty,
		xlabel={$\lambda_1$},
		ylabel={$\lambda_2$}
		]
		\path[name path=topp] (axis cs:-3,2) -- (axis cs:3,2);
		\addplot[samples=300, domain=-2.5:0, name path=A] {-x};
		\addplot[dashed, samples=300, domain=-1:2, name path=B] {x};
		\addplot[samples=300, domain=.2:tan(50), name path=C]
		{tan(100-atan(x))};
		\addplot[gray!30] fill between[of=A and C];
		\addplot[dashed, samples=300, domain=0:2, name path=B] {x};
		\node[pin=190:{$G^{-1}(\Sigma_1)$}] at (axis cs:-.5,1) {};
		\end{axis}
		\end{tikzpicture}
		\caption{\footnotesize Gray regions are examples of $G^{-1}(\Sigma)$ in
			the half-plane $\{\lambda_2 \ge \lambda_1\} \subset \cS(2)$. On the left
			$\Sigma_0 = [\pi/6, \pi/2] \subset\subset \cI_1 = (0, \pi)$, while on the right
			$\Sigma_1 = [0, \pi/2] \subsetneq \overline{\cI}_1$. Note that in the first case
			$\lambda_1(A)$ is bounded uniformly for $A \in G^{-1}(\Sigma_0)$, while
			in the latter case one can find sequences $\{A_n\} \subset
			G^{-1}(\Sigma_1)$ such that $\lambda_1(A_n) \to -\infty$ and
			$\lambda_2(A_n) \to +\infty$ as $n \to \infty$.}\label{fig}
	\end{figure}
	
	 We postpone the proof of the Lemma, proceeding with the analysis of the remaining case $h(x) \le
	G(A) \le h(x) + \varepsilon$; that is,
	$$
	A \in G^{-1}\big([h(x), h(x) + \varepsilon]\big).
	$$
	By \eqref{hss2}, we have $[h(x), h(x) + \varepsilon] \subset \bigcup_{\gamma \in [0,
		\varepsilon]}\{h\big(\overline {\Omega'} \big) + \gamma \} =: \Sigma$ is
	compact in $\cI_k$ open and hence by Lemma \ref{lem:slpe} we have
	$\lambda_{j_A}(A) \leq C = C(\Sigma)$ for
	some $j_A \in \{1, \ldots, N\}$ . The mean
	value theorem then implies
	\begin{equation}\label{arctan}
	\arctan \big( \lambda_{j_A} (A + \eta I) \big) = \arctan \big(
	\lambda_{j_A}
	(A) + \eta \big) = \arctan \big( \lambda_{j_A} (A) \big) + \frac{1}{1 +
		\xi^2}\eta
	\end{equation}
	for some $\xi \in (\lambda_{j_A}, \lambda_{j_A} + \eta)$. We have
	$|\xi| \leq C + \eta$ by \eqref{C_condition} and hence for $\eta \leq
	\eta^*:= C$
	we have $(1 + \xi^2)^{-1} \geq (1 + 4 C^2)^{-1}$. Therefore, for
	$\eta \in (0, \eta^*)$ by the monotonicity of $\arctan$ and
	\eqref{arctan} we have
	\begin{eqnarray*}
		G(A + \eta I) - h(y) & \geq &  \arctan \big( \lambda_{j_A}
		(A+ \eta I)   \big) + \sum_{j \neq j_A} \arctan \big( \lambda_{j}
		(A)   \big) - h(y) \\
		& \geq & G(A) + \frac{\eta}{1 + 4C^2}- h(y) \geq G(A) - h(x)
		\geq 0
	\end{eqnarray*}
	for all $x,y \in \Omega'$ with $|x-y| < \delta(\eta, \Omega')$ to ensure
	$|h(x) - h(y)| < (1 + 4C^2)\eta$, which we can do by the local uniform
	continuity of $h$. The same $\delta$ also works for $\eta > \eta^*$ as
	$G$ is increasing. This completes the proof of the continuity
	\eqref{LUC_recall1} modulo the proof of Lemma \ref{lem:slpe}. 
\end{proof}

\begin{proof}[Proof of Lemma \ref{lem:slpe}] We argue by
	contradiction. Let $\{A_n\}_{n \in \N} \subset G^{-1}(\Sigma)$ be a
	sequence such that
	$|\lambda_j(A_n)| \to +\infty$ for all $j = 1, \ldots, N$. The set
	of all possible accumulation points of $\{ G(A_n) \}_{n \in \N}$ is
	$\left\{ (N - 2k)\pi/2: \ k = 0, 1, \ldots, N \right\}$, which
	correspond to subsequences with
	$$
	\mbox{ $\lambda_1(A_{n_\ell}), \ldots, \lambda_k(A_{n_\ell}) \to -
		\infty$ \ \ and \ \ $\lambda_{k+1}(A_{n_\ell}),
		\ldots, \lambda_N(A_{n_\ell}) \to + \infty$}.
	$$
	Since such accumulation points do not belong to $\cI_k \supset \Sigma$, they
	also do not belong to $\Sigma$, and therefore $G(A_n) \notin \Sigma$ for
	$n$ large enough, which contradicts $A_n \in G^{-1}(\Sigma)$.
	\end{proof}

We conclude with a pair or remarks concerning possible generalizations.

\begin{rem}\label{rem:SLPE}
Note that \eqref{sleq} is $u$-independent. It is still interesting to
observe how non-degeneracy properties of the operator $G$ and regularity
of the inhomogeneous term affect continuity of the associated elliptic
map $\Theta$. We also point out that our comparison results could be easily
extended to cover more general $u$-dependent equations of the form
\begin{equation}
G(D^2 u(x)) = h(x, u(x)),
\end{equation}
under the assumption that $h$ is continuous, monotone in the second
variable, and satisfies $h(\Omega \times \R) \subset \cI_k$ for some $k$.
\end{rem}

\begin{rem}\label{cut_example} The comparison principle stated in Theorem
	\ref{comparison_cut}, that uses a slightly relaxed version of Hausdorff
	continuity \eqref{cont_cut}, might be useful in situations where terms
	of the form $g(x, u)$ appear in the equation. For $c \in C(\overline
	\Omega)$, $c \ge 0$, consider for example the proper elliptic map
	\[
	\Theta(x) := \{ (r,A) \in \R \times \cS(N) :  \  {\rm tr}(A)  - c(x) r
	\ge 0 \},
	\]
	which defines a proper elliptic branch of the linear PDE
	\[
	\Delta u(x) - c(x) u(x) = 0.
	\]
	To check Hausdorff continuity, for any given $\eta > 0$, and $(r, A) \in
	\Theta(x)$,
	\begin{multline*}
	{\rm tr}(A + \eta I)  - c(y) (r - \eta) =  {\rm tr}(A)  - c(x)r + [c(y)
	- c(x)]r + \eta (N + c(y)) \\
	\ge [c(y) - c(x)]r + \eta (N + c(y)).
	\end{multline*}
	Then, one has for general $c$ that $[c(y) - c(x)]r + \eta (N + c(y)) \ge
	0$ for $x$ close to $y$ , that is $(r - \eta, A + \eta I) \in
	\Theta(y)$, only if $r$ lies in a bounded subset of $\R$. This is
	precisely what \eqref{cont_cut} requires, while $\Theta$ would fail to
	satisfy the stronger Hausdorff continuity (unless $c$ is constant over
	$\Omega$).
\end{rem}

\begin{ack}
	The authors wish to thank Reese Harvey for many elucidating discussions and constructive criticism, especially concerning the application given here to the special Lagrangian potential equation. 
\end{ack}


\begin{thebibliography}{99}
	\bibitem{AC84} J-P.\ Aubin and A.\ Cellina, {\em Differential Inclusions: Set Valued Maps and Viability Theory}, Volume 264 of {\em Grundlehren der mathematischen Wissenschaften}, Springer-Verlag, Berlin, 1984.
	\bibitem{BM06} M.\ Bardi and P.\ Mannucci, {\em On the Dirichlet problem for non-totally degenerate fully nonlinear elliptic equations}, Commun.\ Pure Appl.\ Anal.\ {\bf 73} \ (2006), 709--731.
	\bibitem{BB01} G.\ Barles and J.\ Busca, {\em Existence and comparison results for fully nonlinear degenerate  elliptic equations without zeroth-order term}, Comm.\ Partial Differential Equations\ {\bf 26} \ (2001), 2323--2337.
		\bibitem{BP19} I.\ Birindelli and K.\ R.\ Payne, {\em Principal eigenvalues for $k$-Hessian operators by maximum principle methods}, {\tt arxiv.org/abs/1912.09226v1}, 19 December 2019, 1--42.
	\bibitem{BBI01} D.\ Burago, Y.\ Burago and S.\ Ivanov,
		{\em A Course in Metric Geometry}, Volume 33 of {\em Graduate Studies in Mathematics}, American Mathematical Society, Providence, RI, 2001.
	\bibitem{CY86} S.Y.\ Cheng and S.T.\ Yau, {\em Complete affine hypersurfaces. Part I. The completeness of affine metrics}, Comm.\ Pure Appl.\ Math.\ {\bf 39} \ (1986), 839--866.
	\bibitem{CHLP19} M.\ Cirant, F.R.\ Harvey, H.B.\ Lawson, Jr. and K.R.\ Payne, {\em Comparison principles for constant coefficient nonlinear second order equations}, preprint, (2020).
	\bibitem{CP17} M.\ Cirant and K.R.\ Payne, {\em On viscosity solutions to the Dirichlet problem for elliptic branches of nonhomogeneous fully nonlinear equation}, Publ.\ Mat.\ {\bf 61}\ (2017), 529--575.
	\bibitem{CIL92} M.G.\ Crandall, H.\ Ishii and P-L.\ Lions, {\em User's guide to viscosity solutions of second order partial differential equations}, Bull.\ Amer.\ Math.\ Soc.\ {\bf 27}\ (1992), 1--67.
	\bibitem{DDT19} S.\ Dinew, H-S.\ Do and T.D.\ T\^{o}, {\em A viscosity approach to the Dirichlet problem for degenerate complex Hessian-type equations}, Anal.\ PDE \ {\bf 12}\ (2019), 505-–535.
\bibitem{HL82} F.R.\ Harvey and H.B.\ Lawson, Jr., {\em Calibrated geometries}, Acta Math.\ {\bf148} \ (1982), 47--157.
\bibitem{HL09} F.R.\ Harvey and H.B.\ Lawson, Jr., {\em Dirichlet duality and the nonlinear Dirichlet problem}, Comm.\ Pure Appl.\ Math.\ {\bf 62}\ (2009), 396--443.
\bibitem{HL10} F.R.\ Harvey and H.B. Lawson, Jr., {\em Hyperbolic polynomials and the Dirichlet problem}, {\tt arXiv:0912.5220v2}, 19 March 2010, 1--48.
\bibitem{HL11} F.R.\ Harvey and H.B.\ Lawson, Jr., {\em Dirichlet duality and the nonlinear Dirichlet problem on Riemannian manifolds}, J.\ Differential Geom.\ {\bf 88}\ (2011), 395--482.
\bibitem{HL13} F.R.\ Harvey and H.B. Lawson, Jr., {\em G\aa rding's theory of hyperbolic polynomials}, Comm.\ Pure Appl.\ Math.\ {\bf 66}\ (2013), 1102--1128.
\bibitem{HL18} F.R.\ Harvey and H.B. Lawson, Jr., {\em The inhomogeneous Dirichlet Problem for natural operators on manifolds}, Ann.\ Inst.\ Fourier (Grenoble), to appear {\tt arXiv:1805.111213v1}, 28 May 2018, 1--44.
\bibitem{HL19} F.R.\ Harvey and H.B.\ Lawson, Jr., {\em Pseudoconvexity for the special Lagrangian potential equation}, preprint, 2019, pages 1--37.
\bibitem{KK07} B.\ Kawohl and N.\ Kutev, {\em Comparison principle for viscosity solutions of fully nonlinear, degenerate elliptic equations}, Comm.\ Partial Differential Equations, {\bf 32}\ (2007), 1209--1224.
	\bibitem{Kv95} N.V.\ Krylov, {\em On the general notion of fully nonlinear second-order elliptic equations}, Trans.\ Amer.\ Math.\ Soc.\ {\bf 347} (1995), 857--895.
	\bibitem{Sl84} Z.\ Slodkowski, {\em The Bremermann-Dirichlet problem for q-plurisubharmonic functions}, Ann.\ Scuola Norm.\ Sup.\ Pisa Cl.\ Sci.\ (4), {\bf 11}\ (1984), 303--326.

\end{thebibliography}
\end{document}